\documentclass[preprint]{elsarticle}

\usepackage{tikz}
\usetikzlibrary{intersections,calc,arrows.meta}

%Caption when arranging figures
\usepackage[hang,small,bf]{caption}
\usepackage[subrefformat=parens]{subcaption}
\captionsetup{compatibility=false}

%Framed text
\usepackage{ascmac}

%PDF insertion
\usepackage{pdfpages} 

\usepackage{bm}
\usepackage{amsmath}
\usepackage{mathrsfs}
\usepackage{amssymb}
\usepackage{booktabs}
\usepackage{comment}
\usepackage{mathtools}
\usepackage{multirow}
\usepackage{graphicx}
\usepackage{pgfplots}
\pgfplotsset{compat=1.18}
%Number equations by section
\numberwithin{equation}{section}

\usepackage{amsthm}

\newtheorem{thm}{Theorem}[section]
\newtheorem{lem}[thm]{Lemma}
\newdefinition{rem}{Remark}[section]

\newtheorem{mainthm}{Main Theorem}
\newdefinition{dfn}{Definition}[section]
\newtheorem{prop}{Proposition}[section]
\newtheorem{cor}{Corollary}[section]
\newdefinition{ex}{Example}[section]
\newdefinition{ass}{Assumption}

%\journal{Neural Networks}
\begin{document}

\begin{frontmatter}

%% use the tnoteref command within \title for footnotes;
%% use the tnotetext command for theassociated footnote;
%% use the fnref command within \author or \affiliation for footnotes;
%% use the fntext command for theassociated footnote;
%% use the corref command within \author for corresponding author footnotes;
%% use the cortext command for theassociated footnote;
%% use the ead command for the email address,
%% and the form \ead[url] for the home page:
%% \title{Title\tnoteref{label1}}
%% \tnotetext[label1]{}
%% \author{Name\corref{cor1}\fnref{label2}}
%% \ead{email address}
%% \ead[url]{home page}
%% \fntext[label2]{}
%% \cortext[cor1]{}
%% \affiliation{organization={},
%%             addressline={},
%%             city={},
%%             postcode={},
%%             state={},
%%             country={}}
%% \fntext[label3]{}

  \title{Learning Coefficients in Semi-Regular Models}
  %\tnotetext[label1]{}
  \author{Yuki Kurumadani} %% Author name
  \ead{kurumadani@sigmath.es.osaka-u.ac.jp}
  %% Author affiliation
  \affiliation{organization={Graduate School of Engineering Science, Osaka University},%Department and Organization
            addressline={1 Chome-3 Machikaneyamacho}, 
            city={Toyonaka},
            postcode={Osaka 560-0043}, 
            country={Japan}}
  %% Abstract
  \begin{abstract}
Recent advances have clarified theoretical learning accuracy in Bayesian inference, revealing that the asymptotic behavior of metrics such as generalization loss and free energy, assessing predictive accuracy, is dictated by a rational number unique to each statistical model, termed the learning coefficient (real log canonical threshold) \cite{watanabe1}. For models meeting regularity conditions, their learning coefficients are known \cite{watanabe1}. However, for singular models not meeting these conditions, exact values of learning coefficients are provided for specific models like reduced-rank regression \cite{Aoyagi1}, but a broadly applicable calculation method for these learning coefficients in singular models remains elusive.

The problem of determining learning coefficients relates to finding normal crossings of Kullback-Leibler divergence in algebraic geometry \cite{watanabe1}. In this context, it is crucial to perform appropriate coordinate transformations and blow-ups. 

This paper introduces an approach that utilizes properties of the log-likelihood ratio function for constructing specific variable transformations and blow-ups to uniformly calculate the real log canonical threshold. It was found that linear independence in the differential structure of the log-likelihood ratio function significantly influences the real log canonical threshold. This approach has not been considered in previous research.

In this approach, the paper presents cases and methods for calculating the exact values of learning coefficients in statistical models that satisfy a simple condition next to the regularity conditions (\textbf{semi-regular models}), offering examples of learning coefficients for two-parameter semi-regular models and mixture distribution models with a constant mixing ratio.

  \end{abstract}
  
  %% Keywords
  \begin{keyword}
  %% keywords here, in the form: keyword \sep keyword
    resolution map \sep singular learning theory \sep real log canonical threshold \sep algebraic geometry
  \end{keyword}
\end{frontmatter}

%Section 1---------------------------------------------------------
\section{Introduction}
\subsection{Definitions and Assumptions}
Throughout this paper, we consider statistical models \(p(x|\theta)\) with continuous parameters \(\theta = (\theta_1, \ldots, \theta_d) \in \Theta (\subset \mathbb{R}^d) (d\geq1)\), and we denote the true distribution by \(q(x)\). 
The set of possible values for data \(x\) is denoted by \(\chi\). 
The statistical model is defined as \textbf{realizable}, meaning there exists a parameter \(\theta_*\) such that \(q(x) = p(x|\theta_*)\)a.s.. Such parameters \(\theta_*\) are referred to as realization parameters, and the entire set of these parameters is denoted by \(\Theta_*\). 
\textbf{The prior distribution \(\varphi(\theta)\) is assumed to satisfy \(\varphi(\theta_*) > 0\) for any realization parameter \(\theta_*\).}
The random variable that follows the true distribution \(q\) is denoted by \(X\), and \(\mathbb{E}_X[\cdot]\) denotes the operation of taking the average concerning the random variable \(X\). 
In this paper, we assume that the operations of taking expectations and partial derivatives with respect to \(\theta\) are interchangeable.

The Kullback-Leibler divergence is defined as:
\[
K(\theta) := \mathbb{E}_X\left[\log {\frac{p(X|\theta_*)}{p(X|\theta)}}\right]
\]
and is assumed to be analytic around \(\theta = \theta_*\). The log-likelihood ratio function is given by:
\[
f(x|\theta) := \log {\frac{p(x|\theta_*)}{p(x|\theta)}}
\]
and is assumed to be \(L^2\) integrable and analytic around \(\theta = \theta_*\).

\begin{comment}
Note that if the model is realizable, then \(\Theta_* = \{\theta \in \Theta | K(\theta) = 0\}\) holds. This follows from:
\[
K(\theta) = \mathbb{E}\left[f(X|\theta)\right] \geq \frac{1}{2}\mathbb{E}\left[f^2(X|\theta)\right]
\]
\cite[Theorem 6.3]{watanabe1}.
\end{comment}

For fixed data \(x\), the \(m\)-th order terms of the Taylor expansion of the log-likelihood ratio function \(f(x|\theta)\) at \(\theta = \theta_*\) for \(\theta_1, \ldots, \theta_s\) (\(s \leq d\)) are defined as:
\[
F_m(x|\theta_1,\ldots,\theta_s) := \sum_{\substack{i_1+\cdots+i_s=m\\ i_1,\ldots,i_s \in \mathbb{Z}_{\geq 0}}}\frac{1}{i_1!\cdots i_s!}
\cdot\left.\frac{\partial^m f(x|\theta)}{\partial \theta_1^{i_1}\cdots\partial \theta_s^{i_s}}\right|_{\theta = \theta_*}
\cdot(\theta_1 - \theta_{1*})^{i_1}\cdots(\theta_s - \theta_{s*})^{i_s}
\]

The Fisher information matrix at \(\theta = \theta_*\) is denoted by \(I\), i.e.,
\[
I = \mathrm{Cov}\left(\left.\frac{\partial \log{p(X|\theta)}}{\partial \theta_i}\right|_{\theta = \theta_*} \cdot \left.\frac{\partial  \log{p(X|\theta)}}{\partial \theta_j}\right|_{\theta = \theta_*}\right)_{i,j=1,\ldots,d}
\]
and its rank is denoted by \(r\). The Hessian matrix of \(K(\theta)\) at \(\theta = \theta_*\) is denoted by \(J\), i.e.,
\[
J = \left(\left.\frac{\partial^2 K(\theta)}{\partial \theta_i \partial \theta_j}\right|_{\theta = \theta_*}\right)_{i,j=1,\ldots,d}
\]
Generally, \(I\) and \(J\) do not coincide, but under the conditions of this paper, as will be seen later in Remark~\ref{remIJ}, they do coincide.

\subsection{What is a Learning Coefficient?}
First, let us describe the framework of Bayesian theory. Consider a statistical model \(p(x|\theta)\) with parameters \(\theta \in \mathbb{R}^d (d\geq1)\), where \(x \in \chi\), and the true distribution is denoted by \(q(x)\). 
The posterior distribution of the parameter \(\theta\) given data \(X_1, \ldots, X_n\) independently drawn from \(q(x)\) is expressed as:
\[
p(\theta|X_1,\ldots,X_n) = \frac{\varphi(\theta) \prod_{i=1}^n p(X_i|\theta)}{Z_n}
\]
where \(Z_n := \int \varphi(\theta) \prod_{i=1}^n p(X_i|\theta) d\theta\) represents the marginal likelihood function. 
When a new observation \(x\) is given, a statistical model marginalized over its posterior distribution is called a predictive distribution, represented by:
\[
p^*(x) := \int p(x|\theta) p(\theta|X_1,\ldots,X_n) d\theta
\]
This predictive distribution is used to estimate the true distribution \(q(x)\) under Bayesian theory.

The accuracy of the predictive distribution is measured by metrics such as the generalization loss \(G_n\) and the free energy \(F_n\):
\begin{align*}
G_n &:=-\int_{\chi} q(x) \log{p^*(x)} dx\\
F_n &:=-\log{Z_n} = -\log{\int \varphi(\theta) \prod_{i=1}^n p(X_i|\theta) d\theta}
\end{align*}
These metrics are known to take smaller values when the predictive distribution closely approximates the true distribution.

When the true distribution $q(x)$ is realizable by the statistical model $p(x|\theta)$, the generalization loss $G_n$ and the free energy $F_n$ exhibit the following asymptotic behavior (i.e., the behavior as the sample size $n$ becomes large) using a positive rational number $\lambda$ and an integer $m$ greater than or equal to 1 \cite{watanabe1}.
\begin{align*}
\mathbb{E}[G_n] &= -\int_{\chi} q(x) \log{q(x)} dx + \frac{\lambda}{n} - \frac{m-1}{n \log{n}} + o\left(\frac{1}{n \log{n}}\right)\\
\mathbb{E}[F_n] &= -n \int_{\chi} q(x) \log{q(x)} dx + \lambda \log{n} - (m-1) \log{\log{n}} + O(1)
\end{align*}

Since the first term on the right-hand side of both expressions does not depend on the statistical model \(p(x|\theta)\), the asymptotic behaviors are determined by \(\lambda\) in the second term. Thus, when comparing two learning models using these metrics, the value of \(\lambda\) becomes crucial to determine which model better approximates the true distribution. Generally, this \(\lambda\) is referred to as the \textbf{learning coefficient}. The learning coefficient \(\lambda\) is defined for the trio of the statistical model \(p(x|\theta)\), the true distribution \(q(x)\), and the prior distribution \(\varphi(\theta)\).

\subsection{Methods for Calculating Learning Coefficients Using Algebraic Geometry}
It is known that the concept of learning coefficients coincides with the algebraic geometry concept of the \textbf{real log canonical threshold}. The real log canonical threshold is defined using a technique known as resolution of singularities. Here, resolution of singularities refers to transforming an analytic function \(F\) into normal crossing as specified by the following theorem:

\begin{thm}[Resolution of Singularities]\label{resolution thm}\cite{hironaka1}\cite[Theorem 2.3]{watanabe1}
Let \(F(x)\) be a real analytic function defined near the origin in \(\mathbb{R}^d\) and assume \(F(0) = 0\). Then, there exists an open set \(W \subset \mathbb{R}^d\) containing the origin, a real analytic manifold \(U\), and a proper analytic map \(g: U \rightarrow W\) satisfying the following conditions:
\begin{itemize}
    \item[(1)] Define \(W_0 := F^{-1}(0)\) and \(U_0 := g^{-1}(W_0)\). The map \(g: U-U_0 \rightarrow W-W_0\) is an analytic isomorphism.
    \item[(2)] At any point \(Q\) in \(U_0\), by taking local coordinates \(u = (u_1, \ldots, u_d)\) with \(Q\) as the origin, we can express:
    \begin{align}\label{eq:normalcrossing}
    F(g(u)) = a(u) u_1^{k_1} u_2^{k_2} \cdots u_d^{k_d}\\
    \left|g^{\prime}(u)\right| = \left|b(u) u_1^{h_1} u_2^{h_2} \cdots u_d^{h_d}\right|\notag
    \end{align}
    where \(k_i, h_i (i=1, \ldots, d)\) are non-negative integers, and \(a(u), b(u)\) are real analytic functions defined near the origin in \(\mathbb{R}^d\) with \(a(0) \neq 0, b(0) \neq 0\).
\end{itemize}
\end{thm}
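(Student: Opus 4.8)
This statement is Hironaka's resolution of singularities in the special form needed for a single analytic function germ, so the realistic plan is not to reconstruct a proof from scratch but to indicate how it is assembled from embedded resolution (principalization of an ideal) together with a descent to the real analytic category. First I would localize: it suffices to resolve $F$ on an arbitrarily small neighborhood $W$ of the origin, and after a generic linear change of coordinates Weierstrass preparation replaces $F$ by a Weierstrass polynomial in $x_d$ times a unit --- the unit being harmless both for $F^{-1}(0)$ and for the monomial form --- which brings the object close to the algebraic setting. Then I would invoke the principalization theorem for ideals on a smooth space in the characteristic-zero analytic setting: there is a finite sequence of blow-ups along smooth centers whose composite $g\colon U\to W$ transforms the ideal $(F)$ into a locally principal monomial ideal supported on the exceptional divisors, with all exceptional divisors in normal crossings. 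Taking this $g$, condition (1) is automatic, since every blow-up is an isomorphism away from its center and all centers lie inside the successive transforms of $F^{-1}(0)$.

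For condition (2), the identity $F(g(u)) = a(u)\,u_1^{k_1}\cdots u_d^{k_d}$ with $a(0)\neq 0$ is exactly the output of principalization expressed in local coordinates at $Q$. The Jacobian formula follows from the elementary fact that the blow-up of a codimension-$c$ smooth center inside a $d$-dimensional manifold has Jacobian determinant $u_1^{c-1}$ (times a unit) in each affine chart, $u_1=0$ being the equation of the new exceptional divisor; composing the blow-ups and using that the exceptional locus stays in normal crossings at every stage yields $|g'(u)| = |b(u)\,u_1^{h_1}\cdots u_d^{h_d}|$ with $b(0)\neq 0$. The subtle point is that at a given $Q\in U_0$ one must pick a single coordinate system simultaneously adapted to every exceptional component through $Q$ and to the monomial form of $F\circ g$; this is possible precisely because the normal-crossings condition is maintained as an invariant throughout the inductive construction. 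Finally, to land in the real analytic category I would use that the canonical (functorial) resolution algorithm of Bierstone--Milman commutes with complex conjugation, hence is defined over $\mathbb{R}$ and restricts to a proper real analytic map on real points; properness is inherited from properness of blow-ups, and all the chart formulas above are already real.

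The genuine obstacle --- and the reason the theorem is quoted rather than proved --- is the principalization theorem itself. Its inductive proof requires constructing a totally ordered resolution invariant (the order of the ideal along its maximum-multiplicity stratum, refined recursively on maximal-contact hypersurfaces by a companion ideal) and showing that this invariant strictly decreases after each admissible blow-up, while the normal-crossings form of the exceptional locus is preserved. Producing maximal contact, proving the strict drop, and keeping every choice coordinate-independent so that the local blow-ups glue are the technical core of the theorem and lie well outside what can be reproduced here; only in low dimension does it collapse to something elementary ($d=1$ is immediate from $F(x)=a(x)x^{k}$ with $a(0)\neq 0$, and $d=2$ follows from a finite sequence of point blow-ups controlled by the Newton polygon of $F$).
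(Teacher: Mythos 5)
The paper does not prove this theorem at all: it is quoted as an external result, attributed to Hironaka \cite{hironaka1} and to Watanabe's textbook \cite[Theorem 2.3]{watanabe1}, and everything downstream in the paper simply uses it as a black box. So there is no in-paper argument to compare yours against. Your outline is a faithful account of how the result is actually established in the literature --- reduction to principalization of the ideal $(F)$ by a sequence of smooth blow-ups, the normal-crossings bookkeeping that yields both the monomial form of $F\circ g$ and the monomial Jacobian, and the passage to the real analytic category via the functoriality (equivariance under conjugation) of the Bierstone--Milman algorithm --- and you are explicit and honest that the inductive core (maximal contact, the resolution invariant, and its strict decrease) is being cited rather than proved. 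That is the appropriate level of treatment here; attempting more would be out of scope, and the paper itself demands nothing more. One small point worth keeping in mind: the statement as used later in the paper is applied pointwise at each $P\in\Theta_*$ (the paper stresses this locality after the theorem), and your localization step at the start of your sketch is consistent with that usage.
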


The expression as in (\ref{eq:normalcrossing}) is referred to as \textbf{normal crossing}.

It should be noted that Theorem~\ref{resolution thm} is a local statement concerning the neighborhood of the origin in $\mathbb{R}^d$. 
In other words, if there are multiple points where singularities need to be resolved, it is necessary to find $(W,U,g)$ as guaranteed by this Theorem~\ref{resolution thm} at each point.

\begin{dfn}[Real Log Canonical Threshold]\label{LambdaDef}
Let $F$ be a real analytic function defined on an open set $O$ in $\mathbb{R}^d$, and let $C$ be a compact set containing $O$. For each point $P$ in $C$ satisfying $F(P)=0$, by applying a coordinate transformation such that point $P$ is moved to the origin of $\mathbb{R}^d$, Theorem~\ref{resolution thm} can be applied. Fix $(W,U,g)$ as guaranteed by Theorem~\ref{resolution thm}(2). Additionally, denote the non-negative integers $h_i,k_i$ given by Theorem~\ref{resolution thm}(2) in the neighborhood of any point $Q\in U_0$ as $h_i^{(Q)}, k_i^{(Q)}$.
\begin{itemize}
\item[(1)]
  Define the \textbf{real log canonical threshold} $\lambda_P$ at point $P$ of the function $F$ as:
\[
\lambda_P=\inf_{Q\in U_0}\left\{\min_{i=1,\ldots,d}{\frac{h_i^{(Q)}+1}{k_i^{(Q)}}}\right\}
\]
where $(h_i+1)/k_i=\infty$ if $k_i=0$. It is known that this is well-defined, i.e., it does not depend on the choice of $(W,U,g)$.\cite[Theorem 2.4]{watanabe1}
\item[(2)]
  Define the \textbf{real log canonical threshold} $\lambda$ for the compact set $C$ of the function $F$ as\cite[Definition 2.7]{watanabe1}:
  \[
  \lambda = \inf_{P\in C}\lambda_P
  \]
  \item[(3)]
  In (2), for the point $P (\in C)$ that gives the minimum value, the maximum number of $i$ such that $\lambda_P=(h_i^{(Q)}+1)/k_i^{(Q)}$ is satisfied is called the \textbf{multiplicity}. (If there are multiple points $P (\in C)$ that give the minimum value, the maximum number of $i$ among each is called the multiplicity.)
\end{itemize}

\end{dfn}

In this paper, we apply Theorem~\ref{resolution thm} to the analytic function \(F\) defined as the Kullback-Leibler divergence \(K(\theta)\). As previously seen, we assumed a prior distribution $\varphi(\theta_*)>0$ in this paper. Under this condition, the \textbf{learning coefficient \(\lambda\) is equal to the real log canonical threshold for the compact set \(\Theta_* = \{\theta \in \Theta | K(\theta) = 0\}\)} \cite[Theorem 6.6, Definition 6.4]{watanabe1}. That is, by performing the resolution of singularities guaranteed by Theorem~\ref{resolution thm} at each point $P$ of $\Theta_*$, we obtain the real log canonical threshold $\lambda_P$. The minimum value of $\lambda_P$ as point $P$ moves over the entire $\Theta_*$ coincides with the learning coefficient $\lambda$. Moreover, if the real log canonical threshold $\lambda_P$ at some points $P$ on $\Theta_*$ can be calculated, it is clear from the definition that this provides an upper bound for the learning coefficient. That is, $\lambda\leq\lambda_P$ holds.

It should be noted that, although $K(P_1)=K(P_2)=0$ holds for any elements $P_1, P_2 \in \Theta_*$, the differential structure of the function $K$ in the neighborhoods of these two points may not be identical, and thus the real log canonical thresholds $\lambda_{P_1}, \lambda_{P_2}$ may not coincide. Therefore, to obtain the learning coefficient, it is necessary to calculate the real log canonical threshold $\lambda_P$ for all points $P$ in $\Theta_*$.

\subsection{Learning Coefficients for Models Satisfying Regularity Conditions}

It is known that for learning models satisfying the regularity conditions, which allow the posterior distribution to converge to a normal distribution, the learning coefficient \(\lambda\) is given by \(\lambda = d/2\), where \(d\) is the dimension of the parameter space \cite{watanabe1}. Here, the regularity conditions are defined as follows:
\begin{dfn}[Regularity]\label{def1}
A true model \(q(x)\) is said to be \textbf{regular} with respect to the statistical model \(p(x|\theta)\) if it satisfies the following three conditions:
\begin{itemize}
    \item[(1)] There is only one element in the set of realization parameters \(\Theta_*\).
    \item[(2)] \(\theta_*\) is an interior point of the parameter set \(\Theta\), meaning there exists an open neighborhood \(\tilde{\Theta} \subset \Theta\) around \(\theta_*\).
    \item[(3)] The Fisher information matrix \(I \in \mathbb{R}^{d \times d}\) is a positive definite matrix.
\end{itemize}
\end{dfn}

This paper aims to generalize this formula. More specifically, it considers the case where the rank \(r\) of the Fisher information matrix \(I\) is \(0 < r < d\) to provide the learning coefficient \(\lambda\).

\subsection{Upper Bound of Learning Coefficients for General Models}

Assuming a prior distribution \(\varphi(\theta_*) > 0\) throughout this paper, it is known that the real log canonical threshold \(\lambda\) of \(K(\theta)\) at \(\theta = \theta_*\) satisfies\cite[Theorem 7.2]{watanabe1}:
\begin{equation}\label{eq:zyokai01}
\lambda \leq \frac{d}{2}
\end{equation}

\newpage
\section{Main Theorem}\label{sec:mainthm}
\subsection{Overview of the Main Theorem}

% Definition
\begin{dfn}[Semi-Regularity]\label{def3}
A statistical model \(p(x|\theta)\) is said to be \textbf{semi-regular\footnote{This terminology is not a general term.}} at the parameter \(\theta_*\) that realizes the true model \(q(x)\) if the rank \(r\) of the Fisher information matrix \(I\in \mathbb{R}^{d\times d}\) at \(\theta=\theta_*\) is greater than zero.
\end{dfn}

Clearly, a regular model is a semi-regular model. In the following, we arbitrarily fix an element \(\theta_*\) in \(\Theta_*\) and consider the real log canonical threshold at this point. We assume (by translating if necessary) that \(\theta_* = 0\). In the subsequent discussion, we will denote it as \(\theta_*\) for statements that hold regardless of whether \(\theta_* = 0\), but it can be read as \(\theta_* = 0\) without loss of generality.

\begin{ass}\label{mainass}
In the Main Theorem of this paper, we assume the following (1)-(3) for semi-regular models.
\begin{itemize}
  \item[(1)] For the $r$ parameters \(\theta_{1},\ldots,\theta_{r}\), the $r$ random variables
  \[
\left.\frac{\partial f(X|\theta)}{\partial\theta_{1}}\right|_{(\theta_1,\ldots,\theta_d)=0},\ldots,\left.\frac{\partial f(X|\theta)}{\partial\theta_{r}}\right|_{(\theta_1,\ldots,\theta_d)=0}
  \]
  are linearly independent\footnote{In this paper, random variables are said to be linearly independent if they are so with probability 1.}.
  \item[(2)] For the remaining $d-r$ parameters $\theta_{r+1}, \ldots, \theta_{d}$, let \(m\) be an integer greater than or equal to 1. The derivatives of the log-likelihood ratio function \(f(X|\theta)\) with respect to \(\theta_{r+1},\ldots,\theta_{d}\) up to order \(m-1\) at \((\theta_1,\ldots,\theta_d)=0\) are zero with probability 1. In other words,
  \[
  F_m(X|\theta_1,\ldots,\theta_s)
  :=\sum_{\substack{i_1+\cdots+i_s=m\\ i_1,\ldots,i_s\in\mathbb{Z}_{\geq 0}}}
  \frac{1}{i_1!\cdots i_{s}!}
  \cdot
  \left.\frac{\partial^m f(X|\theta)}{\partial\theta_1^{i_1}\cdots\partial\theta_{s}^{i_{s}}}\right|_{(\theta_1,\ldots,\theta_d)=0}
  \cdot\theta_1^{i_1}\cdots\theta_s^{i_s}
  \]
is used such that
\[
F_1(X|\theta_{r+1}, \ldots, \theta_d) = \cdots = F_{m-1}(X|\theta_{r+1}, \ldots, \theta_d) = 0\ \ \text{a.s.}
\]

The maximum value among such \(m\) is redefined as \(m\). 
For convenience, if \(m=1\), it is treated as the case where \(r=d\) in (1).

\item[(3)] For each of the \(d-r\) parameters \((\theta_{r+1},\ldots,\theta_{d}) \neq 0\), one of the following holds:
  \begin{itemize}
    \item[(i)] \(F_m(X|\theta_{r+1},\ldots,\theta_{d})=0\ \ (\text{a.s.})\)
    \item[(ii)] \(F_m(X|\theta_{r+1},\ldots,\theta_{d})\) and the $r$ random variables in (1) are linearly independent.
  \end{itemize}
  For \(m=1\), it is always treated as satisfying (ii) for convenience.
\end{itemize}
\end{ass}

\begin{rem}
In Section~\ref{sec:chg}, we discuss the method for constructing coordinate transformations that satisfy Assumption~\ref{mainass}.
\end{rem}

For semi-regular models that satisfy Assumption~\ref{mainass}(1)(2), the Taylor expansion of the log-likelihood ratio function \(f(X|\theta)\) in terms of the parameters \(\theta\) does not contain terms of order less than \(m-1\) for \((\theta_{r+1}, \ldots, \theta_d)\), implying:
\[
f(X|\theta) = F_1(X|\theta_1, \ldots, \theta_{r}) + F_m(X|\theta_{r+1}, \ldots, \theta_{d}) + \text{(higher order terms)}\ \text{a.s.}
\]
Taking the expected value with respect to \(X\) yields the Taylor expansion of \(K(\theta)\) at \(\theta = 0\):
\[
K(\theta) = \mathbb{E}_X\left[F_1(X|\theta_1, \ldots, \theta_{r}) + F_m(X|\theta_{r+1}, \ldots, \theta_{d})\right] + \text{(higher order terms)}
\]
However, it turns out that many of the lower order terms vanish upon taking the expectation. This formulation is presented in Main Theorem~\ref{mainthm}.

\begin{mainthm}\label{mainthm}
For semi-regular models that satisfy Assumption~\ref{mainass}(1)(2), the Taylor expansion of \(K(\theta)\) at \(\theta=0\) can be expressed as:
\[
K(\theta)=\frac{1}{2}\mathbb{E}_X\left[\left\{F_1(X|\theta_1,\ldots,\theta_{r})+F_m(X|\theta_{r+1},\ldots,\theta_{d})\right\}^2\right]+\text{(higher order terms)}
\]
The (higher order terms) do not include:
\begin{itemize}
  \item Terms up to the \(2m\)-th order that consist only of \(\theta_{r+1},\ldots,\theta_{d}\)
  \item Terms that are first order in \(\theta_1,\ldots,\theta_r\) and up to \(m\)-th order in \(\theta_{r+1},\ldots,\theta_{d}\)
  \item Second-order terms that consist only of \(\theta_1,\ldots,\theta_r\)
\end{itemize}
\begin{figure}[ht]
\centering
\begin{tikzpicture}

\coordinate (O) at (0,-0.1);
\coordinate (A) at (0,0.1);
\coordinate (B) at (1.8,0.1);
\coordinate (C) at (1.8,-0.1);
\filldraw[fill=blue!60!white][ultra thin] (O)--(A)--(B)--(C)--(O);

\coordinate (O1) at (0,0.9);
\coordinate (A1) at (0,1.1);
\coordinate (B1) at (1.8,1.1);
\coordinate (C1) at (1.8,0.9);
\filldraw[fill=red!60!white][ultra thin] (O1)--(A1)--(B1)--(C1)--(O1);

\coordinate (O2) at (2,-0.1);
\coordinate (A2) at (2,0.1);
\coordinate (B2) at (3.8,0.1);
\coordinate (C2) at (3.8,-0.1);
\filldraw[fill=red!60!white][ultra thin] (O2)--(A2)--(B2)--(C2)--(O2);

\draw[->,>=stealth,semithick](-0.5,0)--(6+0.5,0)node[below right]{$\theta_{r+1},\ldots,\theta_{d}$ degree};%x-axis
\draw[->,>=stealth,semithick](0,-1)--(0,2.5+0.5)node[left]{$\theta_{1},\ldots,\theta_{r}$ degree};%y-axis
\draw(0,0)node[below left]{O};%origin
\draw[dotted](0,1)--(2,1);
\draw[dotted](1.8,0)--(1.8,1);
\draw[dotted](2,0)--(2,1);

\draw(2,-0.1)node[below]{$m$};%point (m,0)
\draw(4,0)node[below right]{$2m$};%point (2m,0)
\draw(0,1)node[left]{$1$};%point (0,1)
\draw(0,2)node[left]{$2$};%point (0,2)
\draw(1.5,-0.5)node[below]{$m-1$};%point (m-1,0)
\draw[->,>=stealth,ultra thin](1.5,-0.5)--(1.8,0);
\draw(3.5,-0.5)node[below]{$2m-1$};%point (2m-1,0)
\draw[->,>=stealth,ultra thin](3.5,-0.5)--(3.8,0);

\fill (2,1) circle (2.5pt);
\fill (4,0) circle (2.5pt);
\fill (0,2) circle (2.5pt);
\fill (1.8,0) circle (1.5pt);
\fill (3.8,0) circle (1.5pt);
\fill (0,1) circle (1.5pt);

\draw [arrows = {-Stealth[scale=1.5]}]  (3,1.5) to [out=180,in=60] (2,1);
\draw [arrows = {-Stealth[scale=1.5]}]  (5,0.5) to [out=180,in=60] (4,0);
\draw [arrows = {-Stealth[scale=1.5]}]  (1,2.5) to [out=180,in=60] (0,2);

\draw(3,1.5)node[right]{$\mathbb{E}_X\left[F_1(X|\theta_{1},\ldots,\theta_{r})F_m(X|\theta_{r+1},\ldots,\theta_{d})\right]$};
\draw(5,0.5)node[right]{$\frac{1}{2}\mathbb{E}_X\left[F_m^2(X|\theta_{r+1},\ldots,\theta_{d})\right]$};
\draw(1,2.5)node[right]{$\frac{1}{2}\mathbb{E}_X\left[F_1^2(X|\theta_{1},\ldots,\theta_{r})\right]$};

\end{tikzpicture}
\caption{Main Theorem~\ref{mainthm}. Assuming that the derivatives of the log-likelihood ratio function are zero (a.s.) in the blue area (including the endpoints), the derivatives of the log-likelihood ratio function in the red area (including the endpoints) become random variables with expected value zero. This implies that the coefficients of \(K(\theta)\)'s Taylor expansion in these regions are all zero.}
\end{figure}
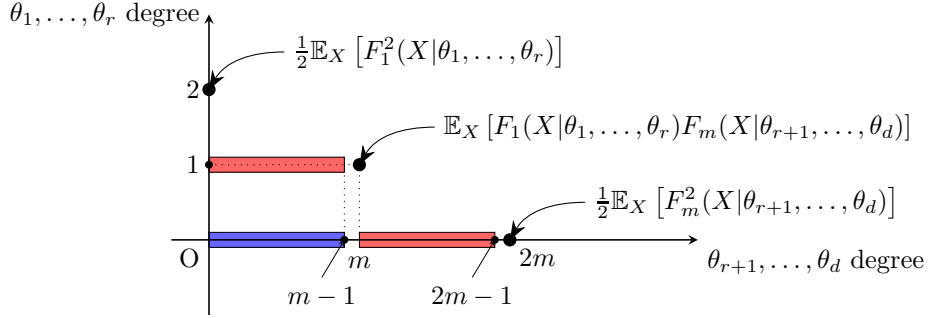
\end{mainthm}

\begin{mainthm}[Formula for the Real Log Canonical Threshold]\label{mainthm1}
Consider a semi-regular model that satisfies Assumption~\ref{mainass}(1)(2)(3). Consider the following blow-up \(g\) at the origin \(O\):
\begin{itemize}
  \item [(a)] Perform one blow-up centered at the origin of \(\mathbb{R}^d\).
  \item [(b)] If the exceptional surface from (a) is \(\{\theta_i=0\}\) (where \(i=r+1,\ldots,d\)), then further perform a blow-up centered at the subvariety
              \(\{(\theta_1,\ldots,\theta_d) \mid \theta_1=\cdots=\theta_r=\theta_i=0\}\).
  \item [(c)] If the exceptional surface from (b) is \(\{\theta_i=0\}\), repeat (b) until the total number of blow-ups reaches \(m\).
\end{itemize}

That is, at the $m$-th blow-up, when the exceptional surface is $\{\theta_i=0\}$, the map $g=g_i$ can be expressed as follows for $(i=r+1,\ldots,d)$:
\[
  g_i\colon(\theta^{\prime}_1,\ldots,\theta^{\prime}_{i-1},\theta_i,
  \theta^{\prime}_{i+1},\ldots,\theta^{\prime}_d)
  \mapsto
  (\theta_1,\ldots,\theta_{i-1},\theta_i,\theta_{i+1},\ldots,\theta_d);
\]
\begin{align*}
  &\theta_1=\theta^m_i\theta_1^{\prime},\ldots,\theta_r=\theta^m_i\theta_r^{\prime},\\
  &\theta_{r+1}=\theta_i\theta_{r+1}^{\prime},\ldots,\theta_{i-1}=\theta_i\theta_{i-1}^{\prime},\ 
  \theta_{i+1}=\theta_i\theta_{i+1}^{\prime},\ldots,\theta_{d}=\theta_i\theta_{d}^{\prime}
\end{align*}
  Defining the subset $S$ of $U_0:=g^{-1}(O)$ by local coordinates $(\theta^{\prime}_1,\ldots,\theta^{\prime}_{i-1},\theta_i,\theta^{\prime}_{i+1},\ldots,\theta^{\prime}_d)$ as
    \[
  S:=\bigcup_{i=r+1}^d
  \left\{
  (\theta^{\prime}_1,\ldots,\theta^{\prime}_d)~\middle|
  \begin{array}{l}
    (\theta^{\prime}_1,\ldots,\theta^{\prime}_{r},\theta_i)=0 \\
    F_m(X|\theta^{\prime}_{r+1},\ldots,\theta^{\prime}_{i-1},1,\theta^{\prime}_{i+1},\ldots,\theta^{\prime}_{d})= 0 (\text{a.s.})
  \end{array}
  \right\}\subset U_0\]  
  then, on \(U_0\setminus S\), normal crossing of \(K(\theta)\) is obtained, and
  \begin{equation}\label{eq_mainthm00}
    \inf_{Q\in U_0\setminus S}\left\{\min_{i=1,\ldots,d}{\frac{h_i^{(Q)}+1}{k_i^{(Q)}}}\right\}=\frac{d-r+rm}{2m}
  \end{equation}
  is satisfied (multiplicity is 1). See Definition~\ref{LambdaDef} for the symbols $k_i^{(Q)},h_i^{(Q)} $.

   Particularly, if all parameters $(\theta_{r+1}, \ldots, \theta_d) \neq 0$ satisfy Assumption~\ref{mainass}(3)$(\mathrm{ii})$, then the real log canonical threshold $\lambda_O$ at the origin $O$ of $K(\theta)$ is given by the following equation (with multiplicity 1):
\begin{equation}\label{eq_mainthm01}
\lambda_O = \frac{d-r + rm}{2m}
\end{equation}
Under this condition, the only point in the neighborhood of the origin in the parameter space $\Theta$ that satisfies $K(\theta) = 0$ is the origin itself.

\end{mainthm}

\begin{rem}
In the case of regular models (\(r=d\)), substituting \(r=d\) into the result (\ref{eq_mainthm01}) of Main Theorem~\ref{mainthm1} gives the real log canonical threshold:
\[
\lambda_O=\frac{d-d+d\times m}{2\times m}=\frac{d}{2}
\]
which indeed matches the results of prior research. 
Furthermore, considering that (\ref{eq_mainthm01}) monotonically decreases with respect to \(m\), it is also evident that the results of Main Theorem~\ref{mainthm1} (\ref{eq_mainthm01}) are consistent with the outcomes of prior research (\ref{eq:zyokai01}).

Using Main Theorem~\ref{mainthm1} suggests that one can discuss the real log canonical threshold from the linear independence (or dependence) of the random variables that appear as coefficients in the lower-order terms of the Taylor expansion of the log-likelihood ratio function, without specifically considering the Taylor expansion of \(K(\theta)\). In other words, there is \textbf{no need to take expectations with respect to the random variable \(X\)}.

\end{rem}

\begin{rem}
If \(2\lambda\) is considered as an indicator of the complexity of a statistical model, the result for regular models implies that \(2\lambda\) matches the number of parameters \(d\), meaning that in the case of singular models, \(2\lambda\) is less than or equal to \(d\) as per (\ref{eq:zyokai01}). From this perspective, considering the result of Main Theorem~\ref{mainthm1} (\ref{eq_mainthm01}) as follows
\[
\frac{d-r+rm}{2m}=\frac{1}{2}\left(r+\frac{d-r}{m}\right)
\]
suggests that for linearly independent parameters \(\theta_1,\ldots,\theta_r\), each counts as one, and for parameters \(\theta_{r+1},\ldots,\theta_d\), where derivatives up to \(m-1\) are all zero, each should be counted as \(1/m\).

\end{rem}

\begin{rem}
Consider the real log canonical threshold (\ref{eq_mainthm01}) of Main Theorem~\ref{mainthm1} from the perspective of ideals. For the parameter \((\theta_1,\ldots,\theta_d)\), consider the ideal
\[
I:=\left(\theta_1,\ldots,\theta_r,\theta_{r+1}^m+\cdots+\theta_{d}^m\right)
\]
The real log canonical threshold (\ref{eq_mainthm01}) of Main Theorem~\ref{mainthm1} is the same as the real log canonical threshold of the ideal \(I\), i.e., the polynomial
\[
\theta_1^2+\cdots +\theta_r^2+\left(\theta_{r+1}^m+\cdots+\theta_{d}^m\right)^2
\]
can resolve singularities by the same blow-ups.
\end{rem}

Before proving Main Theorem~\ref{mainthm},~\ref{mainthm1}, a practical example is introduced.

\begin{ex}[Case of $m=2$]\label{ex:m=2}
  Let \(X\) be a random variable following a binomial distribution Bin$(2,\theta)$ with parameter \(\theta\) where \(0<\theta<1\).
  \[
  \tilde{p}(X=x|\theta)=\binom{2}{x}\theta^x(1-\theta)^{2-x}=
  \begin{cases}
    (1-\theta)^2, & (x=0)\\
    2\theta(1-\theta), & (x=1)\\
    \theta^2, & (x=2)
  \end{cases}
  \]

  Consider a mixed distribution model with parameters \((\theta_1,\theta_2)\) given by:
  \begin{equation}\label{eq:exkongou01}
  p(X=x|\theta_1,\theta_2):=
  \frac{1}{2}\cdot\tilde{p}\left(X=x\middle|\theta_1-\theta_2+\frac{1}{2}\right)
  +\frac{1}{2}\cdot\tilde{p}\left(X=x\middle|\theta_2+\frac{1}{2}\right)~~ (x=0,1,2)
  \end{equation}
  
  Assuming the true distribution is \(\tilde{p}(X|1/2)\), note that this model realizes the true distribution at \((\theta_1,\theta_2)=0\).

  Let us verify that the Main Theorem~\ref{mainthm} holds at $(\theta_1,\theta_2)=0$.
  By setting $d=2, r=1$ and demonstrating that $F_1(X|\theta_1)$ and $F_2(X|\theta_2)$ are non-zero,
  it follows that Assumption~\ref{mainass}(1)(2) is satisfied in the case of $m=2$.

  Using a computer program mathematica, we obtain\footnote{See Appendix for calculations.}:
  \begin{gather}
    K(\theta)=\theta_1^2+8\theta_2^4+8\theta_1^2\theta_2^2-16\theta_1\theta_2^3+\cdots\label{ex_m=2_eq01}\\
    F_1(X|\theta_1)=\left.\frac{\partial f(X|\theta)}{\partial\theta_1}\right|_{(\theta_1,\theta_2)=0}\theta_1 =
    \begin{cases}
      2\theta_1, & x=0\\
      0, & x=1\\
      -2\theta_1, & x=2\\
    \end{cases}\label{ex_m=2_eq02}\\
    F_1(X|\theta_2)=\left.\frac{\partial f(X|\theta)}{\partial\theta_2}\right|_{(\theta_1,\theta_2)=0}\theta_2=0\\
    F_2(X|\theta_2)=\frac{1}{2}\left.\frac{\partial^2 f}{\partial\theta_2^2}\right|_{(\theta_1,\theta_2)=0}\theta_2^2=
    \begin{cases}
      -4\theta_2^2, & x=0\\
      4\theta_2^2, & x=1\\
      -4\theta_2^2, & x=2
    \end{cases}\label{ex_m=2_eq03}
  \end{gather}
  Terms of order five and higher are omitted in (\ref{ex_m=2_eq01}). It is verified with probability 1 that (\ref{ex_m=2_eq02}) and (\ref{ex_m=2_eq03}) are non-zero.

  The applicability of Main Theorem~\ref{mainthm} is verified using (\ref{ex_m=2_eq02}) and (\ref{ex_m=2_eq03}), yielding:
  \begin{align*}
    &\frac{1}{2}\mathbb{E}_X\left[\left\{F_1(X|\theta_1)+F_2(X|\theta_2)\right\}^2\right]\\
    =&\frac{1}{2}\left\{\left(2\theta_1-4\theta_2^2\right)^2\cdot\frac{1}{4}
      +\left(0+4\theta_2^2\right)\cdot\frac{1}{2}
      +\left(-2\theta_1-4\theta_2^2\right)^2\cdot\frac{1}{4}\right\}
    =\theta_1^2+8\theta_2^4
  \end{align*}
  which shows that \(\theta_1^2\) and \(8\theta_2^4\) are not part of the (higher order terms) in Main Theorem~\ref{mainthm}, and other terms are included in (higher order terms). Thus, as claimed by the Main Theorem~\ref{mainthm}, the (higher order terms) do not include:
  terms of degree four or less consisting only of $\theta_2$,
  terms of first degree in $\theta_1$ and second degree or less in $\theta_2$,
  terms of second degree consisting only of $\theta_1$.

  Next, verify that Main Theorem~\ref{mainthm1} holds. First, note that (\ref{ex_m=2_eq02}) and (\ref{ex_m=2_eq03}) are linearly independent for any \(\theta_2\neq 0\) when \(\theta_1=1\), so this statistical model satisfies Assumption~\ref{mainass}(3)($\mathrm{ii}$).

  Thus, applying Main Theorem~\ref{mainthm1} for \((d,r,m)=(2,1,2)\), the real log canonical threshold at the origin should be \(3/4\) (multiplicity is 1).
  This is verified by performing the blow-up \(g_1\) centered at \((\theta_1,\theta_2)=0\) and seeking the normal crossing of \(K(\theta)\).
  
  \begin{itemize}
    \item [(a)]
    First, transform (\ref{ex_m=2_eq01}) with \(\theta_2=\theta_1\theta_2^{\prime}\) using power series \(h_1,a_1\) to get,
    \[
    K(\theta)
    =\theta_1^2\left\{
    1
    +8\theta_1^2\theta_2^{\prime 4}
    +\theta_1h_1(\theta_1,\theta_2^{\prime})
    \right\}
    =\theta_1^2a_1(\theta_1,\theta_2^{\prime})
    \]
    Since on any point of $g_1^{-1}(0)=\{(\theta_1,\theta_2^{\prime})|\theta_1=0\}$,
    \[
    \forall \theta_2^{\prime},~ a_1(0,\theta_2^{\prime})=1\neq 0
    \]
    normal crossings are obtained in this local coordinate $(\theta_1,\theta_2^{\prime})$.

    \item[(b)]
    Next, transform (\ref{ex_m=2_eq01}) with \(\theta_1=\theta_2\theta_1^{\prime}\) using power series \(h_2,a_2\) to get,
    \[
    K(\theta)
    =\theta_2^2\left\{
    \theta_1^{\prime 2}
    +8\theta_2^{2}
    +\theta_2h_2(\theta_1^{\prime},\theta_2)
    \right\}
    =\theta_2^2a_2(\theta_1^{\prime},\theta_2)
    \]
    where \(g_1^{-1}(0)=\{(\theta_1^{\prime},\theta_2)|\theta_2=0\}\) on any point except at \((\theta_1^{\prime},\theta_2)=0\),
    \[
    \forall \theta_1^{\prime}\neq 0,~ a_2(\theta_1^{\prime},0)=\theta_1^{\prime 2}\neq 0
    \]
    shows a normal crossing is achieved.

    Therefore, it is sufficient to find the normal crossings at the point $(\theta_1^{\prime},\theta_2)=0$. Further blow-up \(g_2\) is performed centered at this point.

    \item[(b1)]
    First, transform using \(\theta_2=\theta_1^{\prime}\theta_2^{\prime\prime}\) with power series \(h_3,a_3\) to get,
    \begin{align*}
    K(\theta)
    =\theta_1^{\prime 4}\theta_2^{\prime\prime 2}
    \left\{
    1
    +8\theta_2^{\prime\prime 2}
    +\theta_1^{\prime}h_3(\theta_1^{\prime},\theta_2^{\prime\prime})
    \right\}
    =\theta_1^{\prime 4}\theta_2^{\prime\prime 2}
    a_3(\theta_1^{\prime},\theta_2^{\prime\prime})    
    \end{align*}
    where \(g_2^{-1}(0)=\{(\theta_1^{\prime},\theta_2^{\prime\prime})|\theta_1^{\prime}=0\}\) on any point,
    \[
    \forall \theta_2^{\prime\prime},~ a_3(0,\theta_2^{\prime\prime})
    =1
    +8\theta_2^{\prime\prime 2}
    \neq 0
    \]
    shows a normal crossing is achieved.

    \item[(b2)]
    Next, transform using \(\theta_1^{\prime}=\theta_2\theta_1^{\prime\prime}\) with power series \(h_4,a_4\) to get,
    \[
    K(\theta)
    =\theta_2^4\left\{
    \theta_1^{\prime\prime 2}
    +8
    +\theta_2h_4(\theta_1^{\prime\prime},\theta_2)
    \right\}
    =\theta_2^4a_4(\theta_1^{\prime\prime},\theta_2)
    \]
    where \(g_2^{-1}(0)=\{(\theta_1^{\prime\prime},\theta_2)|\theta_2=0\}\) on any point,
    \[
    \forall\theta_1^{\prime\prime},~ 
    a_4(\theta_1^{\prime\prime},0)=
    \theta_1^{\prime\prime 2}
    +8\neq 0
    \]
    shows a normal crossing is achieved.

\end{itemize}

  The normal crossings for each local coordinate are summarized as shown in Table \ref{fig_ex:m=2}(see Definition~\ref{LambdaDef} for notation \(k_i^{(Q)},h_i^{(Q)}\)). Thus, the real log canonical threshold is confirmed to be \(3/4\) (multiplicity is 1).
\begin{table}[htbp]
  \centering
  \caption{Normal Crossings for Each Local Coordinate.}
  \label{fig_ex:m=2}
  \begin{tabular}{ccccccc}
    No. & loc.coord. & $K(\theta)$ & Jacobian & $k_i^{(Q)}$ & $h_i^{(Q)}$ & $\lambda$ \\\hline
    (a) & $(\theta_1,\theta_2^{\prime})$ & 
 $\theta_1^2a_1(\theta_1,\theta_2^{\prime})$ & $\theta_1 $ & $(2,0)$ & $(1,0)$ & 1 \\ \hline
    (b) & $(\theta_1^{\prime},\theta_2)\neq 0$ & 
 $\theta_2^2a_2(\theta_1^{\prime},\theta_2)$ & $\theta_2$ & $(0,2)$ & $(0,1)$ & 1 \\ \hline
    (b1) & $(\theta_1^{\prime},\theta_2^{\prime\prime})$ & 
 $\theta_1^{\prime 4}\theta_2^{\prime\prime 2}a_3(\theta_1^{\prime},\theta_2^{\prime\prime})$ & $\theta_1^{\prime 2}\theta_2^{\prime\prime} $ & $(4,2)$ & $(2,1)$ & 3/4 \\ \hline
    (b2) & $(\theta_1^{\prime\prime},\theta_2)$ & 
 $\theta_2^4a_4(\theta_1^{\prime\prime},\theta_2)$ & $\theta_2^2 $ & $(0,4)$ & $(0,2)$ & 3/4 \\ 
  \end{tabular}
\end{table}
  
\end{ex}

\subsection{Proof of Main Theorem~\ref{mainthm}}
The proof of Main Theorem~\ref{mainthm} is divided into several steps. 
Proofs other than that of Main Theorem~\ref{mainthm} are included in the appendix.

Initially, we present formulas for higher derivatives of the log-likelihood ratio function
\[
f(x|\theta) = \log \frac{p(x|\theta_*)}{p(x|\theta)}
\]
and the Kullback-Leibler divergence $K(\theta)$
\[
K(\theta) = \mathbb{E}_X\left[f(X|\theta)\right].
\]
For this purpose, we define the quantity $G_{\theta_{i_1}\ldots\theta_{i_n}}(x,\theta)$ for $1 \leq i_1, \ldots, i_n \leq d$ and $x \in \chi, \theta \in \Theta$ as given by Equation~(\ref{DefG}).

Consider partitions of the set $\{1, \ldots, n\}$, where $n \geq 1$. For example,
for $n=2$, there is one possible partition: $\{\{1\},\{2\}\}$.
For $n=3$, there are four possible partitions: \\
$\{\{\{1\},\{2\},\{3\}\}, \{\{1\},\{2,3\}\}, \{\{2\},\{1,3\}\}, \{\{3\},\{1,2\}\}\}$

The set partitions are structured as:
\[
\bigcup_i U_i = \{1, \ldots, n\},\ U_i \cap U_j = \{\},\ U_i \neq \{\}, \{1, \ldots, n\}
\]
Each level of the set is arbitrarily ordered and presented as a sequence. For instance,
for $n=2$, it can be represented as $(T_1=((1),(2)))$.For $n=3$, it can be represented as $(T_1=((1),(2),(3)), T_2=((1),(2,3)), T_3=((2),(1,3)), T_4=((3),(1,2)))$
These sequences are then relabeled with $\{i_1, \ldots, i_n\}$ replacing $\{1, \ldots, n\}$, denoted as $S_{i_1, \ldots, i_n}$. For example:
\begin{align*}
S_{1,3} &= (T_1=((1),(3)))\\
S_{1,1,4} &= (T_1=((1),(1),(4)), T_2=((1),(1,4)), T_3=((1),(1,4)), T_4=((4),(1,1)))\\
S_{1,1,1} &= (T_1=((1),(1),(1)), T_2=((1),(1,1)), T_3=((1),(1,1)), T_4=((1),(1,1)))
\end{align*}
Here, we write:
\[
k \in U, U \in T, T \in S_{i_1, \ldots, i_n}
\]
to denote that the sequence $S_{i_1, \ldots, i_n}$ includes $T$, and $T$ includes $U$, which contains the integer $k$.
The length of $U$ is denoted by $|U|$, and we define:
\begin{equation}\label{DefG}
G_{\theta_{i_1} \ldots \theta_{i_n}}(x, \theta) := \sum_{T \in S_{i_1, \ldots, i_n}} \prod_{U \in T} \frac{\partial^{|U|}\log{p(x|\theta)}}{\prod_{k \in U} \partial \theta_k}
\end{equation}

For instance,
\begin{align*}
G_{\theta_1 \theta_3}(x, \theta) &= \frac{\partial \log{p(x|\theta)}}{\partial \theta_1} \frac{\partial \log{p(x|\theta)}}{\partial \theta_3}\\
G_{\theta_1 \theta_1 \theta_4}(x, \theta) &= \left\{\frac{\partial \log{p(x|\theta)}}{\partial \theta_1}\right\}^2 \frac{\partial \log{p(x|\theta)}}{\partial \theta_4} + 2 \cdot \frac{\partial \log{p(x|\theta)}}{\partial \theta_1} \frac{\partial^2 \log{p(x|\theta)}}{\partial \theta_1 \partial \theta_4}\\
&+ \frac{\partial \log{p(x|\theta)}}{\partial \theta_4} \frac{\partial^2 \log{p(x|\theta)}}{\partial \theta_1^2}\\
G_{\theta_1 \theta_1 \theta_1}(x, \theta) &= \left\{\frac{\partial \log{p(x|\theta)}}{\partial \theta_1}\right\}^3 + 3 \cdot \frac{\partial \log{p(x|\theta)}}{\partial \theta_1} \frac{\partial^2 \log{p(x|\theta)}}{\partial \theta_1^2}
\end{align*}

Clearly, the definition of $G_{\theta_{i_1} \ldots \theta_{i_n}}(x, \theta)$ does not depend on the definition of $S_{i_1, \ldots, i_n}$ (the method of ordering sets into sequences).
In the following, $G_{\theta_1 \theta_1 \theta_4}(x, \theta)$ and $G_{\theta_1 \theta_1 \theta_1}(x, \theta)$ are also denoted as $G_{\theta_1^2 \theta_4}(x, \theta)$ and $G_{\theta_1^3}(x, \theta)$, respectively.

\begin{lem}\label{lemG}
The function $G_{\theta_{i_1}\cdots\theta_{i_n}}(x,\theta)$ satisfies the recurrence relation
\begin{align*}
 &G_{\theta_{i_1}\cdots\theta_{i_{n+1}}}(x,\theta)\\
 =& G_{\theta_{i_1}\cdots\theta_{i_n}}(x,\theta)\cdot\frac{\partial\log{p(x|\theta)}}{\partial\theta_{i_{n+1}}}+\frac{\partial^n\log{p(x|\theta)}}{\partial\theta_{i_1}\cdots\partial\theta_{i_n}}\cdot\frac{\partial\log{p(x|\theta)}}{\partial\theta_{i_{n+1}}}+\frac{\partial  G_{\theta_{i_1}\cdots\theta_{i_n}}(x,\theta)}{\partial\theta_{i_{n+1}}}
\end{align*}
\end{lem}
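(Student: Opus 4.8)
The plan is to reduce the claim to an elementary Leibniz‑type recurrence for the \emph{unrestricted} analogue of $G$, and then to account for the single partition that the definition of $G$ deliberately omits. Abbreviate $\ell := \log p(x|\theta)$ and, after the relabelling $1,\dots,m\mapsto i_1,\dots,i_m$, write $\partial_U\ell := \partial^{|U|}\ell/\prod_{k\in U}\partial\theta_{i_k}$ for nonempty $U\subseteq\{1,\dots,m\}$; all mixed partials commute because $f$, hence $\ell$, is analytic near $\theta_*$. Introduce
\[
\widetilde G_{\theta_{i_1}\cdots\theta_{i_n}}(x,\theta) := \sum_{T}\ \prod_{U\in T}\partial_U\ell ,
\]
the same sum as in (\ref{DefG}) but now over \emph{all} set partitions $T$ of $\{1,\dots,n\}$, including the trivial one‑block partition $\{\{1,\dots,n\}\}$. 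By construction $\widetilde G_{\theta_{i_1}\cdots\theta_{i_n}} = G_{\theta_{i_1}\cdots\theta_{i_n}} + \dfrac{\partial^n\log p(x|\theta)}{\partial\theta_{i_1}\cdots\partial\theta_{i_n}}$, the extra summand being exactly the trivial partition.

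First I would establish, by induction on $n$, the closed form $\dfrac{\partial^n p(x|\theta)}{\partial\theta_{i_1}\cdots\partial\theta_{i_n}} = p(x|\theta)\,\widetilde G_{\theta_{i_1}\cdots\theta_{i_n}}(x,\theta)$ — this is the higher chain rule (Fa\`{a} di Bruno's formula) for $p=e^{\ell}$. The case $n=1$ is $\partial_{i_1}p = p\,\partial_{i_1}\ell$; for the step, differentiate $\partial^n p = p\,\widetilde G_{\theta_{i_1}\cdots\theta_{i_n}}$ in $\theta_{i_{n+1}}$ and factor out $p$, which yields both the induction step and the recurrence
\[
\widetilde G_{\theta_{i_1}\cdots\theta_{i_{n+1}}} = \frac{\partial\log p(x|\theta)}{\partial\theta_{i_{n+1}}}\,\widetilde G_{\theta_{i_1}\cdots\theta_{i_n}} + \frac{\partial\widetilde G_{\theta_{i_1}\cdots\theta_{i_n}}}{\partial\theta_{i_{n+1}}} .
\]
That the right‑hand side really equals $\widetilde G_{\theta_{i_1}\cdots\theta_{i_{n+1}}}$ is seen by sorting the partitions of $\{1,\dots,n+1\}$: those in which $n+1$ is a singleton block contribute the first term, those in which it is not contribute the second (each such partition arising from a unique choice of a partition of $\{1,\dots,n\}$ together with the block into which $n+1$ is inserted).

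To finish, substitute $\widetilde G = G + \partial^n_{\theta_{i_1}\cdots\theta_{i_n}}\ell$ into both sides of this recurrence. On the left the extra piece is $\partial^{n+1}_{\theta_{i_1}\cdots\theta_{i_{n+1}}}\ell$; on the right, $\partial_{\theta_{i_{n+1}}}(\partial^n_{\theta_{i_1}\cdots\theta_{i_n}}\ell) = \partial^{n+1}_{\theta_{i_1}\cdots\theta_{i_{n+1}}}\ell$ appears inside the last term, and the two cancel, leaving precisely
\[
G_{\theta_{i_1}\cdots\theta_{i_{n+1}}} = G_{\theta_{i_1}\cdots\theta_{i_n}}\frac{\partial\log p}{\partial\theta_{i_{n+1}}} + \frac{\partial^n\log p}{\partial\theta_{i_1}\cdots\partial\theta_{i_n}}\frac{\partial\log p}{\partial\theta_{i_{n+1}}} + \frac{\partial G_{\theta_{i_1}\cdots\theta_{i_n}}}{\partial\theta_{i_{n+1}}} ,
\]
which is the asserted identity. (Equivalently, one may read the recurrence straight off (\ref{DefG}) by classifying the partitions of $\{1,\dots,n+1\}$ according to the block containing $n+1$.) There is no serious obstacle: once the $\widetilde G$‑reformulation is in place the statement is a short bookkeeping exercise. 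The one point that does require care — and the reason the middle term is present at all — is precisely this bookkeeping of the excluded trivial partition: it is absent from $G$ but must be reinstated exactly when the index $n+1$ forms a singleton block, which is what generates the term $\dfrac{\partial^n\log p}{\partial\theta_{i_1}\cdots\partial\theta_{i_n}}\cdot\dfrac{\partial\log p}{\partial\theta_{i_{n+1}}}$.
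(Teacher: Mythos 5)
Your proof is correct, and its combinatorial heart --- classifying the partitions of $\{1,\ldots,n+1\}$ according to whether the new index $i_{n+1}$ forms a singleton block (giving the first two terms, the middle one coming from the otherwise-excluded one-block partition of $\{1,\ldots,n\}$) or is absorbed into an existing block (giving $\partial G/\partial\theta_{i_{n+1}}$ via the product rule) --- is exactly the argument the paper gives. The auxiliary full-partition sum $\widetilde G = G + \partial^n\log p/\partial\theta_{i_1}\cdots\partial\theta_{i_n}$ and the identity $\partial^n p = p\,\widetilde G$ are a clean but inessential repackaging (the latter is in effect Proposition~\ref{prop1}(1), which the paper derives \emph{from} this lemma; your independent induction keeps this non-circular).
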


%---------------------Prop1---------------------
\begin{prop}\label{prop1}\leavevmode\par
\begin{itemize}
\item[(1)] 
\[
\frac{\partial^n  f(x|\theta)}{\partial \theta_{i_1}\cdots \partial \theta_{i_n}}=-\frac{\frac{\partial^n p(x|\theta)}{\partial \theta_{i_1}\cdots \partial \theta_{i_n}}}{p(x|\theta)} + G_{\theta_{i_1}\cdots\theta_{i_n}}(x,\theta)
\]

\item[(2)] When the order of partial differentiation and integration can be exchanged,
\begin{align*}
\left.\frac{\partial^n K}{\partial \theta_{i_1}\cdots \partial \theta_{i_n}}\right|_{\theta=\theta_*}
=\mathbb{E}_X\left[ G_{\theta_{i_1}\cdots\theta_{i_n}}(X,\theta_*)\right]
\end{align*}
which means that the first term on the right side of (1) becomes a random variable with expected value zero at $\theta=\theta_*$.
\end{itemize}
\end{prop}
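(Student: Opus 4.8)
The plan is to reduce both parts to the single pointwise identity
\[
\frac{1}{p(x|\theta)}\,\frac{\partial^n p(x|\theta)}{\partial\theta_{i_1}\cdots\partial\theta_{i_n}}
= \frac{\partial^n \log p(x|\theta)}{\partial\theta_{i_1}\cdots\partial\theta_{i_n}}
+ G_{\theta_{i_1}\cdots\theta_{i_n}}(x,\theta),
\]
valid wherever $p(x|\theta)>0$; this is just the exponential-formula (Fa\`a di Bruno) relation between the derivatives of $p=\exp(\log p)$ and those of $\log p$, in which $G_{\theta_{i_1}\cdots\theta_{i_n}}$ collects exactly the partition terms other than the single block $\{i_1,\dots,i_n\}$. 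Granting this identity, part~(1) is immediate: since $\log p(x|\theta_*)$ does not depend on $\theta$, one has $\partial^n f/\partial\theta_{i_1}\cdots\partial\theta_{i_n} = -\,\partial^n\log p(x|\theta)/\partial\theta_{i_1}\cdots\partial\theta_{i_n}$, and substituting the identity yields precisely the stated formula.

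To prove the identity I would induct on $n$, using the recurrence of Lemma~\ref{lemG}. The base case $n=1$ reads $p^{-1}\partial p/\partial\theta_{i_1} = \partial\log p/\partial\theta_{i_1} + G_{\theta_{i_1}}$, which holds because $G_{\theta_{i_1}}=0$ (a one-element set admits no partition of the kind considered, so the defining sum is empty). For the inductive step, rewrite the hypothesis as $\partial^n p/\partial\theta_{i_1}\cdots\partial\theta_{i_n} = p\bigl(\partial^n\log p/\partial\theta_{i_1}\cdots\partial\theta_{i_n} + G_{\theta_{i_1}\cdots\theta_{i_n}}\bigr)$, differentiate with respect to $\theta_{i_{n+1}}$ by the product rule, divide through by $p=p(x|\theta)$, and use $p^{-1}\partial p/\partial\theta_{i_{n+1}} = \partial\log p/\partial\theta_{i_{n+1}}$. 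This gives
\begin{align*}
\frac{1}{p}\,\frac{\partial^{n+1} p}{\partial\theta_{i_1}\cdots\partial\theta_{i_{n+1}}}
&= \frac{\partial^{n+1}\log p}{\partial\theta_{i_1}\cdots\partial\theta_{i_{n+1}}}
+ \frac{\partial^{n}\log p}{\partial\theta_{i_1}\cdots\partial\theta_{i_n}}\,\frac{\partial\log p}{\partial\theta_{i_{n+1}}} \\
&\quad + G_{\theta_{i_1}\cdots\theta_{i_n}}\,\frac{\partial\log p}{\partial\theta_{i_{n+1}}}
+ \frac{\partial G_{\theta_{i_1}\cdots\theta_{i_n}}}{\partial\theta_{i_{n+1}}},
\end{align*}
and the last three summands are exactly $G_{\theta_{i_1}\cdots\theta_{i_{n+1}}}$ by Lemma~\ref{lemG}, which closes the induction. (Alternatively one could invoke the classical Bell-polynomial formula for $\partial^n(e^{g})$, but the induction keeps everything self-contained.)

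For part~(2), first use the paper's standing assumption that $\partial_\theta$ and $\mathbb{E}_X$ commute to write $\partial^n K/\partial\theta_{i_1}\cdots\partial\theta_{i_n}\big|_{\theta_*} = \mathbb{E}_X\bigl[\partial^n f(X|\theta)/\partial\theta_{i_1}\cdots\partial\theta_{i_n}\big|_{\theta_*}\bigr]$, then substitute part~(1). It remains to show that the expectation of the first term on the right of part~(1) vanishes at $\theta=\theta_*$. Since the model is realizable, $q(x)=p(x|\theta_*)$, so $\mathbb{E}_X[\,\cdot\,]=\int_\chi p(x|\theta_*)\,(\cdot)\,dx$; the factor $p(x|\theta_*)$ cancels the denominator, leaving $\int_\chi \partial^n p(x|\theta)/\partial\theta_{i_1}\cdots\partial\theta_{i_n}\big|_{\theta_*}\,dx$. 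Exchanging integration and differentiation once more, this equals $\partial^n/\partial\theta_{i_1}\cdots\partial\theta_{i_n}\bigl(\int_\chi p(x|\theta)\,dx\bigr)\big|_{\theta_*} = \partial^n(1)/\partial\theta_{i_1}\cdots\partial\theta_{i_n} = 0$ because $n\ge 1$. Hence $\partial^n K/\partial\theta_{i_1}\cdots\partial\theta_{i_n}\big|_{\theta_*} = \mathbb{E}_X[G_{\theta_{i_1}\cdots\theta_{i_n}}(X,\theta_*)]$, as claimed.

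There is no serious obstacle here; the content is essentially bookkeeping. The one place to be careful is the recognition step in the induction for part~(1) — matching the product-rule expansion term-by-term to the recurrence of Lemma~\ref{lemG} — and, in part~(2), checking that the differentiation-under-the-integral-sign step falls under the paper's interchange hypotheses, which it does, being the same normalization relation $\int_\chi p(x|\theta)\,dx\equiv 1$ that underlies the whole setup.
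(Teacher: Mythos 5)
Your proposal is correct and follows essentially the same route as the paper: both parts rest on an induction in $n$ whose inductive step is exactly the recurrence of Lemma~\ref{lemG} (you differentiate the product form $\partial^n p = p(\partial^n\log p + G)$ where the paper differentiates the quotient $-\,p^{-1}\partial^n p$, but the algebra is identical), and part (2) is handled by the same cancellation $q(x)=p(x|\theta_*)$ followed by differentiating $\int_\chi p(x|\theta)\,dx\equiv 1$.
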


\begin{rem}\label{remIJ}
Using Proposition~\ref{prop1}, if we express the derivatives of the log-likelihood ratio function $f(x|\theta)$ up to the fourth order, defining $Y_{i_1\cdots i_n}:=\frac{\partial^n \log{p(x|\theta)}}{\partial\theta_{i_1}\cdots\partial\theta_{i_n}}$, we have:
\begin{align}
  \frac{\partial f(x|\theta)}{\partial \theta_{i_1}}
  =&-\frac{\frac{\partial p(x|\theta)}{\partial \theta_{i_1}}}{p(x|\theta)}\notag\\
  \frac{\partial^2  f(x|\theta)}{\partial \theta_{i_1}\partial \theta_{i_2}}
  =&-\frac{\frac{\partial^2 p(x|\theta)}{\partial \theta_{i_1}\partial \theta_{i_2}}}{p(x|\theta)}+Y_{i_1} Y_{i_2}\label{remIJ_eq02}\\
  \frac{\partial^3  f(x|\theta)}{\partial \theta_{i_1}\partial \theta_{i_2}\partial \theta_{i_3}}
    =&-\frac{\frac{\partial^3 p(x|\theta)}{\partial \theta_{i_1}\partial \theta_{i_2}\partial \theta_{i_3}}}{p(x|\theta)}
    + Y_{i_1}Y_{i_2}Y_{i_3}+ Y_{i_1,i_2}Y_{i_3}+ Y_{i_2,i_3}Y_{i_1}+ Y_{i_3,i_1}Y_{i_2}\label{remIJ_eq03}\\
  \frac{\partial^4  f(x|\theta)}{\partial \theta_{i_1}\partial \theta_{i_2}\partial \theta_{i_3}\partial \theta_{i_4}}
    =&-\frac{\frac{\partial^4 p(x|\theta)}{\partial \theta_{i_1}\partial \theta_{i_2}\partial \theta_{i_3}\partial \theta_{i_4}}}{p(x|\theta)} 
      + Y_{i_1}Y_{i_2}Y_{i_3}Y_{i_4}\notag\\ 
    &+ Y_{i_1,i_2}Y_{i_3}Y_{i_4}+Y_{i_1,i_3}Y_{i_2}Y_{i_4}+Y_{i_1,i_4}Y_{i_2}Y_{i_3}\notag\\
    &+ Y_{i_2,i_3}Y_{i_1}Y_{i_4}+Y_{i_2,i_4}Y_{i_1}Y_{i_3}+Y_{i_3,i_4}Y_{i_1}Y_{i_2}\label{remIJ_eq04}\\
    &+ Y_{i_1,i_2}Y_{i_3,i_4}+Y_{i_1,i_3}Y_{i_2,i_4}+Y_{i_1,i_4}Y_{i_2,i_3}\notag\\
    &+ Y_{i_1,i_2,i_3}Y_{i_4}+Y_{i_1,i_2,i_4}Y_{i_3}+Y_{i_1,i_3,i_4}Y_{i_2}+Y_{i_2,i_3,i_4}Y_{i_1}\notag
\end{align}
In particular, using (\ref{remIJ_eq02}), we find:
\begin{align*}
\left.\frac{\partial^2  K(\theta)}{\partial \theta_{i_1}\partial \theta_{i_2}}\right|_{\theta=\theta_*}
=&\mathbb{E}_X\left[\left.\frac{\partial \log{p(X|\theta)}}{\partial\theta_{i_1}}\right|_{\theta=\theta_*}\cdot\left.\frac{\partial \log{p(X|\theta)}}{\partial\theta_{i_2}}\right|_{\theta=\theta_*}\right]\\
=&\mathrm{Cov}\left(\left.\frac{\partial \log{p(X|\theta)}}{\partial\theta_{i_1}}\right|_{\theta=\theta_*}\cdot\left.\frac{\partial  \log{p(X|\theta)}}{\partial\theta_{i_2}}\right|_{\theta=\theta_*}\right)
\end{align*}
Thus, under the conditions of this paper, the Fisher information matrix $I$ and $J$ coincide.
\end{rem}

Based on Proposition~\ref{prop1}(2), it is necessary to determine the expectation of the random variable $G(X,0)$ when considering the derivatives of $K(\theta)$. Below, we provide specific calculations for $G(X,0)$ under Assumption~\ref{mainass}(1)(2).

\begin{prop}\label{prop:taylor}
  Assume that Assumption~\ref{mainass}(1)(2) is satisfied. That is, for any non-negative integers $(i_{r+1},\ldots,i_{d})$ that satisfy $i_{r+1}+\cdots +i_{d}\leq m-1$, the log-likelihood ratio function $f(X|\theta)$ satisfies:
  \[
  \left.\frac{\partial^{i_{r+1}+\cdots +i_{d}}f(X|\theta)}{\partial\theta_{r+1}^{i_{r+1}}
  \cdots\partial\theta_d^{i_{d}}}\right|_{(\theta_1,\ldots,\theta_d)=0}=0 ~~\text{a.s.}
  \]
  Under this condition, the following holds:
  \begin{itemize}
    \item [(1)] For $i_{r+1}+\cdots +i_{d}\leq 2m-1$,
      \[
      G_{\theta_{r+1}^{i_{r+1}}\cdots\theta_d^{i_d}}(X,0)=0 ~~\text{a.s.}
      \]
    \item [(2)] For $1\leq j \leq r,~i_{r+1}+\cdots +i_{d}\leq m-1$,
      \[
      G_{\theta_j\theta_{r+1}^{i_{r+1}}\cdots\theta_d^{i_d}}(X,0)=0 ~~\text{a.s.}
      \]
    \item [(3)] For $1\leq j \leq r,~i_{r+1}+\cdots +i_{d}= m$,
      \[
      G_{\theta_j\theta_{r+1}^{i_{r+1}}\cdots\theta_{d}^{i_{d}}}(X,0)
      =\left.\frac{\partial f(X|\theta)}{\partial\theta_j}\right|_{(\theta_1,\ldots,\theta_d)=0}
      \cdot
      \left.\frac{\partial^{m} f(X|\theta)}{\partial\theta^{i_{r+1}}_{r+1}\cdots\partial\theta^{i_{d}}_d}\right|_{(\theta_1,\ldots,\theta_d)=0} ~~\text{a.s.}
      \]
    \item [(4)] For $i_{r+1}+\cdots +i_{d}= 2m$,
      \begin{align*}
        &G_{\theta_{r+1}^{i_{r+1}}\cdots\theta_d^{i_d}}(X,0)\\
        =&\frac{1}{2}\sum_{\substack{j_{r+1}+\cdots+j_d=m\\ i_h\ge j_h\ge0}}
          \binom{i_{r+1}}{j_{r+1}}\cdots\binom{i_d}{j_d}
          \left.\frac{\partial^m f(X|\theta)}{\partial\theta_{r+1}^{j_{r+1}}\cdots\partial\theta_d^{j_d}}\right|_{\theta=0}
          \cdot
          \left.\frac{\partial^m f(X|\theta)}{\partial\theta_{r+1}^{i_{r+1}-j_{r+1}}\cdots\partial\theta_d^{i_d-j_d}}\right|_{\theta=0}  ~~\text{a.s.}
      \end{align*}    
  \end{itemize}
\end{prop}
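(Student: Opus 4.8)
The plan is to argue directly from the definition~(\ref{DefG}) of $G$, evaluated at $\theta=0$, rather than from the recurrence of Lemma~\ref{lemG}. Write $\ell(x\mid\theta):=\log p(x\mid\theta)$. Since $f(x\mid\theta)=\ell(x\mid\theta_*)-\ell(x\mid\theta)$ with $\ell(x\mid\theta_*)$ independent of $\theta$, every mixed partial of $f$ of order $\ge 1$ is minus the corresponding partial of $\ell$, so Assumption~\ref{mainass}(1)(2) in the form restated in the Proposition says precisely that $\big(\partial^{a_{r+1}+\cdots+a_d}\ell/\partial\theta_{r+1}^{a_{r+1}}\cdots\partial\theta_d^{a_d}\big)\big|_{0}=0$ a.s.\ whenever $1\le a_{r+1}+\cdots+a_d\le m-1$. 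By~(\ref{DefG}), $G_{\theta_{i_1}\cdots\theta_{i_n}}(X,0)$ is the sum over all set partitions $T$ of $\{1,\ldots,n\}$ into at least two nonempty blocks of $\prod_{U\in T}\big(\partial^{|U|}\ell/\prod_{k\in U}\partial\theta_k\big)\big|_{0}$, where position $k$ carries index $i_k$. Call a block \emph{pure} if all indices $i_k$, $k\in U$, lie in $\{r+1,\ldots,d\}$, and \emph{mixed} otherwise. The key observation is that a pure block of size $1\le|U|\le m-1$ forces its factor, hence the whole product, to vanish a.s.; so in each case it suffices to enumerate the \emph{surviving} partitions — those all of whose blocks are mixed or pure of size $\ge m$ — and evaluate the surviving products.

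For~(1): all $n=i_{r+1}+\cdots+i_d\le 2m-1$ positions are pure-indexed, so every block is pure, and a surviving partition would need each block of size $\ge m$, impossible with $\ge 2$ blocks since $n\le 2m-1<2m$. Hence $G=0$ a.s. For~(2): exactly one position carries an index $j\le r$ and the remaining $i_{r+1}+\cdots+i_d\le m-1$ positions are pure-indexed, so any pure block has size $\le m-1$ and dies; a surviving partition would then have all blocks mixed, i.e.\ each containing the unique index-$j$ position, impossible for $\ge 2$ disjoint blocks. Hence $G=0$ a.s. For~(3): one position has index $j\le r$ and exactly $m$ positions are pure-indexed; since only $m$ pure positions exist, a surviving partition can have at most one pure block, of size exactly $m$ containing all of them, which leaves $\{j\}$ as the only other block — the unique surviving partition, with value $\big(\partial\ell/\partial\theta_j\big)\big|_0\cdot\big(\partial^m\ell/\partial\theta_{r+1}^{i_{r+1}}\cdots\partial\theta_d^{i_d}\big)\big|_0$, which is the claimed product once each $\ell$-derivative is rewritten as minus the corresponding $f$-derivative (the two sign flips cancel).

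Part~(4) carries the only real bookkeeping. With $n=i_{r+1}+\cdots+i_d=2m$ pure-indexed positions and no index below $r+1$ available, a surviving partition must be exactly two blocks, each of size exactly $m$; thus $G$ equals the sum over unordered pairs $\{A,B\}$ partitioning $\{1,\ldots,n\}$ with $|A|=|B|=m$ of $\big(\partial^m\ell/\prod_{k\in A}\partial\theta_k\big)\big|_0\cdot\big(\partial^m\ell/\prod_{k\in B}\partial\theta_k\big)\big|_0$. I would group these by the composition $(a_{r+1},\ldots,a_d)$ recording how many index-$h$ positions fall in $A$: there are $\prod_h\binom{i_h}{a_h}$ \emph{ordered} pairs $(A,B)$ with that composition, each unordered pair is counted twice in the ordered enumeration (always $A\ne B$ since $A,B$ are disjoint and nonempty), and the factor attached to such a pair is $\big(\partial^m\ell/\partial\theta_{r+1}^{a_{r+1}}\cdots\partial\theta_d^{a_d}\big)\big|_0\cdot\big(\partial^m\ell/\partial\theta_{r+1}^{i_{r+1}-a_{r+1}}\cdots\partial\theta_d^{i_d-a_d}\big)\big|_0$; summing over $a$, dividing by $2$, and converting $\ell$ to $f$ as before yields the stated formula. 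The point to treat with care is that positions sharing an index are nonetheless distinct, so the enumeration is genuinely over subsets of a $2m$-element set — which is what produces the binomial coefficients and the factor $\tfrac12$ — rather than over compositions directly; I expect this counting step, together with keeping the $\ell$-versus-$f$ signs straight throughout, to be the main (if modest) obstacle, and the entire argument could alternatively be organized as an induction on $n$ via Lemma~\ref{lemG}.
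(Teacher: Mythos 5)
Your proposal is correct and follows essentially the same route as the paper: both arguments read off from the definition~(\ref{DefG}) that each term of $G$ at $\theta=0$ is a product over blocks of a partition, kill every partition containing a block of the forbidden type (your ``pure block of size $\le m-1$'' is exactly the paper's $U_1$, and your ``block missing the $j$-position'' is its $U_2$), and then enumerate and count the surviving partitions to get the product in (3) and the $\tfrac12\sum\binom{i_{r+1}}{j_{r+1}}\cdots\binom{i_d}{j_d}$ formula in (4). Your ordered-pairs-divided-by-two bookkeeping in (4) is a slightly cleaner packaging of the paper's case split on $(2j_{r+1},\ldots,2j_d)=(i_{r+1},\ldots,i_d)$, but it yields the identical count.
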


\begin{ex}
  Consider the case of $d=2, m=2$ in Proposition~\ref{prop:taylor}(1)(4). 
  Suppose for any non-negative integer pair $(i_1, i_2)$ satisfying $i_1 + i_2 \leq 1$, the log-likelihood function $f(X|\theta)$ satisfies
  \[
  \left.\frac{\partial^{i_1 + i_2} f(X|\theta)}{\partial\theta_1^{i_1} \partial\theta_2^{i_2}}\right|_{(\theta_1, \theta_2)=0} = 0 \quad \text{a.s.}
  \]
  Specifically, assume
  \begin{gather}\label{ex_eq01}
  \left.\frac{\partial f(X|\theta)}{\partial\theta_1}\right|_{(\theta_1, \theta_2)=0} =
  \left.\frac{\partial f(X|\theta)}{\partial\theta_2}\right|_{(\theta_1, \theta_2)=0} = 0 \quad \text{a.s.}
  \end{gather}

  For $(i_1, i_2) = (1, 2)$ in Proposition~\ref{prop:taylor}(1), we expect
  \begin{align*}
    G_{\theta_1\theta_2^{2}}(X,0) = 0 \quad \text{a.s.}
  \end{align*}
  Indeed, from Remark~\ref{remIJ}(\ref{remIJ_eq03}),
  \begin{align*}
    &G_{\theta_1\theta_2^{2}}(X,\theta)\\
    =& \frac{\partial \log p(X|\theta)}{\partial\theta_1}\left(\frac{\partial \log p(X|\theta)}{\partial\theta_2}\right)^2
    + 2\frac{\partial^2 \log p(X|\theta)}{\partial\theta_1\partial\theta_2}\cdot\frac{\partial \log p(X|\theta)}{\partial\theta_2}\\
    &+ \frac{\partial^2 \log p(X|\theta)}{\partial\theta_2^2}\cdot\frac{\partial \log p(X|\theta)}{\partial\theta_1}
  \end{align*}
  where each term includes a first derivative of $f$, which are all zero at $\theta=0$ by (\ref{ex_eq01}), confirming that indeed $G_{\theta_1\theta_2^{2}}(X,0) = 0$ a.s.

  Next, for $(i_1, i_2) = (1, 3)$ in Proposition~\ref{prop:taylor}(4),
  \begin{align*}
    &G_{\theta_1\theta_2^{3}}(X,0)\\
    =& \frac{1}{2}\sum_{\substack{j_1+j_2=2\\ i_h\ge j_h\ge0}}
      \binom{1}{j_1}\binom{3}{j_2}
      \left.\frac{\partial^2 f(X|\theta)}{\partial\theta_1^{j_1}\partial\theta_2^{j_2}}\right|_{\theta=0}
      \cdot
      \left.\frac{\partial^2 f(X|\theta)}{\partial\theta_1^{1-j_1}\partial\theta_2^{3-j_2}}\right|_{\theta=0} \text{a.s.}\\
    =& \frac{3}{2}
      \left.\frac{\partial^2 f(X|\theta)}{\partial\theta_2^2}\right|_{\theta=0}
      \cdot
      \left.\frac{\partial^2 f(X|\theta)}{\partial\theta_1\partial\theta_2}\right|_{\theta=0}
      + \frac{3}{2}
      \left.\frac{\partial^2 f(X|\theta)}{\partial\theta_1\partial\theta_2}\right|_{\theta=0}
      \cdot
      \left.\frac{\partial^2 f(X|\theta)}{\partial\theta_2^2}\right|_{\theta=0} \text{a.s.}\\
    =& 3\left.\frac{\partial^2 f(X|\theta)}{\partial\theta_2^2}\right|_{\theta=0}
      \cdot
      \left.\frac{\partial^2 f(X|\theta)}{\partial\theta_1\partial\theta_2}\right|_{\theta=0} \text{a.s.}
  \end{align*}
  which should hold true. Indeed, considering only terms involving second derivatives as per Remark~\ref{remIJ}(\ref{remIJ_eq04}),
  \begin{align*}
    G_{\theta_1\theta_2^{3}}(X,0)
    = \left.\frac{\partial^2 f(X|\theta)}{\partial\theta_2^2}\right|_{\theta=0}
      \cdot
      \left.\frac{\partial^2 f(X|\theta)}{\partial\theta_1\partial\theta_2}\right|_{\theta=0} \times 3 \quad \text{a.s.}
  \end{align*}
  confirming that indeed Proposition~\ref{prop:taylor}(4) holds.
\end{ex}

\begin{proof}[\textbf{Proof of Main Theorem~\ref{mainthm}}]\leavevmode\par
  There are five points to be proven regarding the terms appearing in the Taylor expansion of \(K(\theta)\) at \(\theta=0\):
  \begin{itemize}
    \item [(1)] The coefficients of the terms of order 0 in \((\theta_1,\ldots,\theta_r)\) and order up to \(2m-1\) in \((\theta_{r+1},\ldots,\theta_d)\) are zero.
    \item [(2)] The coefficients of the terms of order 1 in \((\theta_1,\ldots,\theta_r)\) and order up to \(m-1\) in \((\theta_{r+1},\ldots,\theta_d)\) are zero.
    \item [(3)] The terms of order 1 in \((\theta_1,\ldots,\theta_r)\) and order \(m\) in \((\theta_{r+1},\ldots,\theta_d)\) are represented by:
      \[
      \mathbb{E}_X\left[F_1(X|\theta_1,\ldots,\theta_r)F_m(X|\theta_{r+1},\ldots,\theta_d)\right]
      \]  
    \item [(4)] The terms of order 0 in \((\theta_1,\ldots,\theta_r)\) and order \(2m\) in \((\theta_{r+1},\ldots,\theta_d)\) are represented by:
      \[
      \frac{1}{2}\mathbb{E}_X\left[F^2_m(X|\theta_{r+1},\ldots,\theta_d)\right]
      \]  
    \item [(5)] The terms of order 2 in \((\theta_1,\ldots,\theta_r)\) and order 0 in \((\theta_{r+1},\ldots,\theta_d)\) are represented by:
      \[
      \frac{1}{2}\mathbb{E}_X\left[F^2_1(X|\theta_1,\ldots,\theta_r)\right]
      \]
  \end{itemize}

  Assuming the interchange of differentiation and integration, using Proposition~\ref{prop1}(2), for any tuple of non-negative integers \((i_1,\ldots,i_d)\), the coefficient of the term \(\theta_1^{i_1}\cdots\theta_d^{i_d}\) in the Taylor expansion of \(K(\theta)\) at \(\theta=0\) can be expressed as
  \begin{equation}\label{mainthm_eq01}
  \frac{1}{i_1!\cdots i_d!}\left.\frac{\partial^{i_1+\cdots+i_d}K(\theta)}{\partial\theta_1^{i_1}\cdots\partial\theta_d^{i_d}}\right|_{(\theta_1,\ldots,\theta_d)=0}
  =\frac{1}{i_1!\cdots i_d!}\mathbb{E}_X\left[
  G_{\theta_1^{i_1}\cdots\theta_d^{i_{d}}}(X,0)
  \right]
  \end{equation}

  First, concerning points (1) and (2), according to Proposition~\ref{prop:taylor}(1)(2), the right-hand side $G$ is zero as a random variable almost surely. Therefore, (\ref{mainthm_eq01}) = 0. Thus, the theorem is demonstrated.

  Next, for point (3), as per Proposition~\ref{prop:taylor}(3) for $1 \leq j \leq r$ and $i_{r+1} + \cdots + i_{d} = m$,
  \[
  G_{\theta_j\theta_{r+1}^{i_{r+1}}\cdots\theta_{d}^{i_{d}}}(X,0)
  = \left.\frac{\partial f(X|\theta)}{\partial\theta_j}\right|_{(\theta_1,\ldots,\theta_d)=0}
  \cdot
  \left.\frac{\partial^{m} f(X|\theta)}{\partial\theta^{i_{r+1}}_{r+1}\cdots\partial\theta^{i_{d}}_d}\right|_{(\theta_1,\ldots,\theta_d)=0} \quad \text{a.s.}
\]
  From (\ref{mainthm_eq01}), the term in question is given by,
  \begin{align*}
  &\sum_{j=1}^r\sum_{\substack{i_{r+1} + \cdots + i_{d} = m \\ i_h \ge 0}}\frac{1}{i_{r+1}! \cdots i_{d}!}
    \left.\frac{\partial^{m+1} K(\theta)}
    {\partial\theta_j\partial\theta_{r+1}^{i_{r+1}}\cdots\partial\theta_{d}^{i_{d}}}\right|_{(\theta_1,\ldots,\theta_d)=0}
    \times\theta_j\theta_{r+1}^{i_{r+1}}\cdots\theta_{d}^{i_{d}}\\
  &= \sum_{j=1}^r\sum_{\substack{i_{r+1} + \cdots + i_{d} = m \\ i_h \ge 0}}
    \frac{\theta_j\theta_{r+1}^{i_{r+1}}\cdots\theta_{d}^{i_{d}}}{i_{r+1}! \cdots i_{d}!}
    \mathbb{E}_X\left[
    \left.\frac{\partial f(X|\theta)}{\partial\theta_j}\right|_{\theta=0}
    \cdot\left.\frac{\partial^{m} f(X|\theta)}{\partial\theta^{i_{r+1}}_{r+1}\cdots\partial\theta^{i_{d}}_d}\right|_{\theta=0}\right]\\
  &= \mathbb{E}_X\left[F_1(X|\theta_1,\ldots,\theta_r)F_m(X|\theta_{r+1},\ldots,\theta_d)\right]
  \end{align*}
  Thus, the expression can be represented as shown.

  (5) is a special case of (4) (\(m=1\)), so the proof is completed by demonstrating (4). By Proposition~\ref{prop:taylor}(4), for \(i_{r+1} + \cdots + i_d = 2m\),
  \begin{align*}
    &G_{\theta_{r+1}^{i_{r+1}}\cdots\theta_d^{i_d}}(X,0)\\
    =&\frac{1}{2}\sum_{\substack{j_{r+1}+\cdots+j_d=m\\ i_h\ge j_h\ge0}}
      \binom{i_{r+1}}{j_{r+1}}\cdots\binom{i_d}{j_d}
      \left.\frac{\partial^m f(X|\theta)}{\partial\theta_{r+1}^{j_{r+1}}\cdots\partial\theta_d^{j_d}}\right|_{\theta=0}
      \cdot
      \left.\frac{\partial^m f(X|\theta)}{\partial\theta_{r+1}^{i_{r+1}-j_{r+1}}\cdots\partial\theta_d^{i_d-j_d}}\right|_{\theta=0}  ~~\text{a.s.}
  \end{align*}   
  Thus, from (\ref{mainthm_eq01}),
  \begin{align*}
    &\left.\frac{\partial^{2m} K(\theta)}{\partial\theta_{r+1}^{i_{r+1}}\cdots\partial\theta_d^{i_d}}\right|_{\theta=0}\\
    =&\frac{1}{2}\sum_{\substack{j_{r+1}+\cdots+j_d=m\\ i_h\ge j_h\ge0}}\binom{i_{r+1}}{j_{r+1}}\cdots\binom{i_d}{j_d}
      \mathbb{E}_X\left[\left.\frac{\partial^m f(X|\theta)}{\partial\theta_{r+1}^{j_{r+1}}\cdots\partial\theta_d^{j_d}}\right|_{\theta=0}
      \cdot\left.\frac{\partial^m f(X|\theta)}{\partial\theta_{r+1}^{i_{r+1}-j_{r+1}}\cdots\partial\theta_d^{i_d-j_d}}\right|_{\theta=0}\right]
  \end{align*}
  Using this, we derive from the target formula:
  \begin{align*}
  &\frac{1}{2}\mathbb{E}_X\left[F_m^2(X|\theta_{r+1},\ldots,\theta_d)\right]\\
  =&\frac{1}{2}\mathbb{E}_X\left[
    \left(\sum_{\substack{k_{r+1}+\cdots +k_d=m\\ k_h\ge0}}\frac{\theta_{r+1}^{k_{r+1}}\cdots\theta_d^{k_d}}{k_{r+1}!\cdots k_d!}
    \left.\frac{\partial^m f(X|\theta)}{\partial\theta_{r+1}^{k_{r+1}}\cdots\partial\theta_d^{k_d}}\right|_{\theta=0}\right)^2
    \right]\\
  =&\frac{1}{2}
    \sum_{\substack{k_{r+1}+\cdots +k_d=m\\ k_h\ge0}}
    \sum_{\substack{l_{r+1}+\cdots +l_d=m\\ l_h\ge0}}
    \frac{\theta_{r+1}^{k_{r+1}+l_{r+1}}\cdots\theta_d^{k_d+l_d}}{k_{r+1}!\cdots k_d!l_{r+1}!\cdots l_d!}\\
    &~~~\times
    \mathbb{E}_X\left[
    \left.\frac{\partial^m f(X|\theta)}{\partial\theta_{r+1}^{k_{r+1}}\cdots\partial\theta_d^{k_d}}\right|_{\theta=0}
    \left.\frac{\partial^m f(X|\theta)}{\partial\theta_{r+1}^{l_{r+1}}\cdots\partial\theta_d^{l_d}}\right|_{\theta=0}
    \right]\\
  =&\frac{1}{2}
    \sum_{\substack{i_{r+1}+\cdots+i_d=2m \\ i_h\ge 0}}
    \sum_{\substack{j_{r+1}+\cdots+j_d=m\\ i_h\ge j_h\ge0}}
    \frac{\theta_{r+1}^{i_{r+1}}\cdots\theta_d^{i_d}}{j_{r+1}!\cdots j_d!(i_{r+1}-j_{r+1})!\cdots (i_d-j_d)!}\\
    &~~~\times
    \mathbb{E}_X\left[
    \left.\frac{\partial^m f(X|\theta)}{\partial\theta_{r+1}^{j_{r+1}}\cdots\partial\theta_d^{j_d}}\right|_{\theta=0}
    \cdot
    \left.\frac{\partial^m f(X|\theta)}{\partial\theta_{r+1}^{i_{r+1}-j_{r+1}}\cdots\partial\theta_d^{i_d-j_d}}\right|_{\theta=0}
    \right]\\
  =&\sum_{\substack{i_{r+1}+\cdots+i_d=2m \\ i_h\ge 0}}
    \frac{1}{i_{r+1}!\cdots i_d!}\cdot
    \frac{1}{2}
    \sum_{\substack{j_{r+1}+\cdots+j_d=m\\ i_h\ge j_h\ge0}}
    \binom{i_{r+1}}{j_{r+1}}\cdots\binom{i_d}{j_d}\\
  &~~~
    \times
    \mathbb{E}_X\left[
    \left.\frac{\partial^m f(X|\theta)}{\partial\theta_{r+1}^{j_{r+1}}\cdots\partial\theta_d^{j_d}}\right|_{\theta=0}
    \cdot
    \left.\frac{\partial^m f(X|\theta)}{\partial\theta_{r+1}^{i_{r+1}-j_{r+1}}\cdots\partial\theta_d^{i_d-j_d}}\right|_{\theta=0}
    \right]
    \times\theta_{r+1}^{i_{r+1}}\cdots\theta_d^{i_d}\\
  =&\sum_{\substack{i_{r+1}+\cdots+i_d=2m \\ i_h\ge 0}}
    \frac{1}{i_{r+1}!\cdots i_d!}
    \left.\frac{\partial^{2m} K(\theta)}
    {\partial\theta_{r+1}^{i_{r+1}}\cdots\partial\theta_d^{i_d}}\right|_{\theta=0}\theta_{r+1}^{i_{r+1}}\cdots\theta_d^{i_d}
  \end{align*}
  In the algebraic transformation, the variable change \(k_h + l_h = i_h, \ k_h = j_h\) (for \(h = r+1, \ldots, d\)) was performed. Notice the domains before and after the variable transformation are as follows:
  \begin{align*}
  &\{k_{r+1} + \cdots + k_d = m, \ l_{r+1} + \cdots + l_d = m, \ k_h \geq 0, l_h \geq 0\}\\
  &=\{j_{r+1} + \cdots + j_d = m, \ i_{r+1} + \cdots + i_d - (j_{r+1} + \cdots + j_d) = m, \ j_h \geq 0, i_h - j_h \geq 0\}\\
  &=\{j_{r+1} + \cdots + j_d = m, \ i_{r+1} + \cdots + i_d = 2m, \ i_h \geq j_h \geq 0\}
  \end{align*}

\end{proof}

\begin{rem}
Although it is outside the scope of this paper, semi-regularity in Main Theorem~\ref{mainthm} is not essential, and it is possible to generalize to arbitrary orders \((n,m)\) instead of \((1,m)\). Furthermore, it is also possible to generalize to tuples of three or more natural numbers.
\end{rem}

%%%%%%%%%%%%%%%%%%%%%%%%%%%%%%%%%%%%%%%%%%%%%%%%%%%%%
\subsection{Proof of Main Theorem~\ref{mainthm1}}
First, we state a key lemma essential to the proof of Main Theorem~\ref{mainthm1}.

\begin{lem}\label{mainlem2}\leavevmode\par
  \begin{itemize}
    \item [(1)] When Assumption~\ref{mainass}(1) is satisfied, the following equivalence holds:
      \begin{align*}
        \mathbb{E}_X\left[F^2_1(X|\theta_{1},\ldots,\theta_{r})\right]=0 &\ \Leftrightarrow\ (\theta_{1},\ldots,\theta_{r})=0
      \end{align*} 
    \item [(2)] Let \(a\) be a non-zero constant, and assume that Assumption~\ref{mainass}(1)(2)(3) is satisfied.
      \begin{itemize}
        \item [(i)] 
          When \((\theta_{r+1},\ldots,\theta_d)\) satisfies Assumption~\ref{mainass}(3)(i), the following equivalence holds:
          \begin{align*}
            \begin{split}
              &\mathbb{E}_X\left[\left\{F_1(X|\theta_{1},\ldots,\theta_{r})+a\cdot F_m(X|\theta_{r+1},\ldots,\theta_{d})\right\}^2\right]=0 \\
              \Leftrightarrow\ & (\theta_{1},\ldots,\theta_{r})=0,\ F_m(X|\theta_{r+1},\ldots,\theta_{d})=0 ~~\text{a.s.}
            \end{split}
          \end{align*}     
        \item[(ii)] 
          When \((\theta_{r+1},\ldots,\theta_d)\neq 0\) satisfies Assumption~\ref{mainass}(3)(ii), the following holds:
          \[
            \mathbb{E}_X\left[\left\{F_1(X|\theta_{1},\ldots,\theta_{r})+a\cdot F_m(X|\theta_{r+1},\ldots,\theta_{d})\right\}^2\right]>0
          \]
      \end{itemize}
    In particular, in either case (i) or (ii), if \((\theta_1,\ldots,\theta_r)\neq 0\), then
    \[
    \mathbb{E}_X\left[\left\{F_1(X|\theta_{1},\ldots,\theta_{r})+a\cdot F_m(X|\theta_{r+1},\ldots,\theta_{d})\right\}^2\right]>0
    \]
    holds.
  \end{itemize}
\end{lem}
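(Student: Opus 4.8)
The plan is to reduce everything to statements about expectations of squares of random variables, using that a non-negative random variable has expectation zero iff it vanishes almost surely. For part (1), write $F_1(X|\theta_1,\ldots,\theta_r) = \sum_{j=1}^r \theta_j \, \partial_{\theta_j} f(X|\theta)|_0$. Then $\mathbb{E}_X[F_1^2] = 0$ forces $F_1(X|\theta_1,\ldots,\theta_r) = 0$ a.s., i.e. $\sum_j \theta_j \, \partial_{\theta_j} f(X|\theta)|_0 = 0$ a.s. By Assumption~\ref{mainass}(1) the random variables $\partial_{\theta_1} f(X|\theta)|_0,\ldots,\partial_{\theta_r} f(X|\theta)|_0$ are linearly independent, so all coefficients $\theta_j$ must vanish; the converse is trivial. (One should also note $\mathbb{E}_X[F_1^2]$ is, up to the factor $1/2$, the quadratic form $\theta^\top I' \theta$ where $I'$ is the leading $r\times r$ block, which is positive definite by linear independence — this gives an alternative phrasing.)

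For part (2)(i), suppose $(\theta_{r+1},\ldots,\theta_d)$ satisfies Assumption~\ref{mainass}(3)(i), so $F_m(X|\theta_{r+1},\ldots,\theta_d) = 0$ a.s. Then the cross term and the $F_m^2$ term both vanish, and the bracketed expression equals $F_1(X|\theta_1,\ldots,\theta_r)$, reducing to part (1): $\mathbb{E}_X[\{F_1 + a F_m\}^2] = \mathbb{E}_X[F_1^2] = 0 \iff (\theta_1,\ldots,\theta_r) = 0$, which together with $F_m = 0$ a.s. is exactly the right-hand side. For part (2)(ii), suppose $(\theta_{r+1},\ldots,\theta_d)\neq 0$ satisfies Assumption~\ref{mainass}(3)(ii): $F_m(X|\theta_{r+1},\ldots,\theta_d)$ together with the $r$ derivatives of (1) are linearly independent. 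Since $a\neq 0$ and $F_m \not\equiv 0$ (linear independence of a family including $F_m$ forces $F_m \neq 0$ a.s.), the random variable $F_1(X|\theta_1,\ldots,\theta_r) + a F_m(X|\theta_{r+1},\ldots,\theta_d) = \sum_{j=1}^r \theta_j\,\partial_{\theta_j}f|_0 + a F_m$ is a linear combination of the $r+1$ linearly independent random variables with at least the $F_m$-coefficient $a$ nonzero, hence is not a.s. zero, so its square has strictly positive expectation.

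Finally, the ``in particular'' clause: if $(\theta_1,\ldots,\theta_r)\neq 0$, then in case (i) we have $\mathbb{E}_X[\{F_1+aF_m\}^2] = \mathbb{E}_X[F_1^2] > 0$ by part (1) applied contrapositively; in case (ii) strict positivity was already shown unconditionally on $(\theta_1,\ldots,\theta_r)$. Either way the quantity is strictly positive.

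I expect the only genuine subtlety to be bookkeeping around the a.s.\ statements — making sure ``linearly independent with probability 1'' is used correctly, i.e.\ that a nontrivial linear combination of such random variables is nonzero on a set of positive measure, and that $F_m\not\equiv 0$ a.s.\ whenever it appears in a linearly independent family. The rest is a direct translation between ``expectation of a square is zero'' and ``the random variable vanishes a.s.'', plus the elementary fact that the $r\times r$ Gram matrix of linearly independent $L^2$ random variables is positive definite. There is no hard analytic or geometric step here; the lemma is the algebraic input that will later be fed into the resolution-of-singularities computation of Main Theorem~\ref{mainthm1}.
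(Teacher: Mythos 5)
Your proof is correct and follows essentially the same route as the paper's: translate vanishing of the expected square into almost-sure vanishing of the random variable, then invoke linear independence of the first derivatives (and, for (2)(ii), of those derivatives together with $F_m$) to force the coefficients to be zero. The only nitpick is your parenthetical claim that linear independence forces $F_m\neq 0$ \emph{a.s.} — it only forces $F_m$ to be nonzero on a set of positive measure — but this is not load-bearing, since your actual argument rests on the (correct) fact that a nontrivial linear combination of linearly independent random variables cannot vanish almost surely.
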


%---------------------Main Thm2---------------------
\begin{proof}[\textbf{Proof of Main Theorem~\ref{mainthm1}}]\leavevmode\par
First, we demonstrate (\ref{eq_mainthm00}), and finally, we show (\ref{eq_mainthm01}).

When Assumption~\ref{mainass}(1)(2) is satisfied, we can use Main Theorem~\ref{mainthm} to express the Taylor expansion of $K(\theta)$ at $(\theta_1, \ldots, \theta_d) = 0$ as
\[
K(\theta) = \frac{1}{2} \mathbb{E}_X \left[ \left\{ F_1(X|\theta_1, \ldots, \theta_r) + F_m(X|\theta_{r+1}, \ldots, \theta_d) \right\}^2 \right] + \text{(higher order terms)}
\]
Here, the (higher order terms) are specifically expressed as the sum of the following four terms:

\begin{itemize}
  \item $f_0(\theta_{r+1},\ldots,\theta_d)$: terms of order $2m+1$ or higher
  \item $f_1(\theta_1,\ldots,\theta_d)$: first-degree homogeneous in $\theta_1, \ldots, \theta_r$ and of order $m+1$ or higher in $\theta_{r+1}, \ldots, \theta_d$
  \item $f_2(\theta_1,\ldots,\theta_d)$: of order at least second-degree in $\theta_1, \ldots, \theta_r$ and at least first-degree in $\theta_{r+1}, \ldots, \theta_d$
  \item $f_3(\theta_1,\ldots,\theta_r)$: third-degree or higher
\end{itemize}

In the following, we demonstrate these in the case of the lowest degree terms, namely:
\begin{itemize}
  \item $f_0(\theta_{r+1},\ldots,\theta_d)$: homogeneous of degree $2m+1$
  \item $f_1(\theta_1,\ldots,\theta_d)$: first-degree homogeneous in $\theta_1, \ldots, \theta_r$ and homogeneous of degree $m+1$ in $\theta_{r+1}, \ldots, \theta_d$
  \item $f_2(\theta_1,\ldots,\theta_d)$: second-degree homogeneous in $\theta_1, \ldots, \theta_r$ and first-degree homogeneous in $\theta_{r+1}, \ldots, \theta_d$
  \item $f_3(\theta_1,\ldots,\theta_r)$: homogeneous of third-degree
\end{itemize}
The general case is proven similarly.

We will consider the real log canonical threshold by performing the following blow-ups:
\begin{itemize}
  \item [(a)] Perform a blow-up centered at the origin of $\mathbb{R}^d$ once.
  \item [(b)] If the exceptional surface in (a) is $\{\theta_i = 0\}$ (where $i = r+1, \ldots, d$), perform another blow-up centered at the subvariety
               $\{(\theta_1, \ldots, \theta_d) \mid \theta_1 = \cdots = \theta_r = \theta_i = 0\}$.
  \item [(c)] If the exceptional surface in (b) is $\{\theta_i = 0\}$, repeat (b) until the total number of blow-ups reaches $m$.
\end{itemize}

Let's first consider (a).

(a-1) Consider the case where the exceptional surface is $\{\theta_i = 0\} (i=1, \ldots, r)$. For example, in the case $i=1$, that is, when we perform a blow-up with $\{\theta_2 = \theta_1 \theta_2^{\prime}, \ldots, \theta_d = \theta_1 \theta_d^{\prime}\}$, the exceptional surface is $\{\theta_1 = 0\}$, and
\begin{align*}
K(\theta) = &\frac{1}{2} \mathbb{E}_X \left[ \left\{ F_1(X|\theta_1, \ldots, \theta_r) + F_m(X|\theta_{r+1}, \ldots, \theta_d) \right\}^2 \right] \\
&\ \ \ \ \ \ + f_0(\theta_{r+1}, \ldots, \theta_d) + f_1(\theta_1, \ldots, \theta_d) + f_2(\theta_1, \ldots, \theta_d) + f_3(\theta_1, \ldots, \theta_r) \\
= &\frac{1}{2} \theta_1^2 \Biggl\{ \mathbb{E}_X \left[ \left\{ F_1(X|1, \theta^{\prime}_2, \ldots, \theta^{\prime}_r) + \theta_1^{m-1} F_m(X|\theta^{\prime}_{r+1}, \ldots, \theta^{\prime}_{d}) \right\}^2 \right] \\
&\ \ \ \ \ \ + \theta_1^{2m-1} f_0(\theta^{\prime}_{r+1}, \ldots, \theta^{\prime}_d)
+ \theta_1^m f_1(1, \theta^{\prime}_2, \ldots, \theta^{\prime}_d) \\
&\ \ \ \ \ \ + \theta_1 f_2(1, \theta^{\prime}_2, \ldots, \theta^{\prime}_d)
+ \theta_1 f_3(1, \theta^{\prime}_2, \ldots, \theta^{\prime}_r) \Biggr\} \\
= &\frac{1}{2} \theta_1^2 a(\theta_1, \theta^{\prime}_2, \ldots, \theta^{\prime}_d)
\end{align*}
(where $a$ is an analytic function). Considering the point on $U_0 = g^{-1}(O)$ in this local coordinate system $(\theta_1, \theta^{\prime}_2, \ldots, \theta^{\prime}_d)$, which satisfies $\theta_1 = 0$,
\[
a(0, \theta^{\prime}_2, \ldots, \theta^{\prime}_d) =
\begin{cases*}
\mathbb{E}_X \left[ F_1^2(X|1, \theta^{\prime}_2, \ldots, \theta^{\prime}_r) \right] & $(m \geq 2)$ \\
\mathbb{E}_X \left[ \left\{ F_1(X|1, \theta^{\prime}_2, \ldots, \theta^{\prime}_r) + F_m(X|\theta^{\prime}_{r+1}, \ldots, \theta^{\prime}_d) \right\}^2 \right] & $(m = 1)$
\end{cases*}
\]
and by Lemma~\ref{mainlem2}(1)(2), we obtain $a(0, \theta^{\prime}_2, \ldots, \theta^{\prime}_d) > 0$. Therefore, in this local coordinate system, the normal crossing of $K(\theta)$ is obtained at any point $Q$ on $U_0$, and
  \[
  \inf_{Q \in U_0} \min_{i=1, \ldots, d} \frac{h_i^{(Q)} + 1}{k_i^{(Q)}} = \frac{d-1+1}{2} = \frac{d}{2}
  \]
(multiplicity is 1).

(a-2) Next, consider the case where the exceptional surface is $\{\theta_i = 0\} (i=r+1, \ldots, d)$. For example, in the case $i=d$, that is, when we perform a blow-up with $\{\theta_1 = \theta_d \theta_1^{\prime}, \ldots, \theta_{d-1} = \theta_d \theta_{d-1}^{\prime}\}$, the exceptional surface is $\{\theta_d = 0\}$, and
\begin{align*}
K(\theta) = &\frac{1}{2} \mathbb{E}_X \left[ \left\{ F_1(X|\theta_1, \ldots, \theta_r) + F_m(X|\theta_{r+1}, \ldots, \theta_d) \right\}^2 \right] \\
&\ \ \ \ \ \ + f_0(\theta_{r+1}, \ldots, \theta_d) + f_1(\theta_1, \ldots, \theta_d) + f_2(\theta_1, \ldots, \theta_d) + f_3(\theta_1, \ldots, \theta_r) \\
= &\frac{1}{2} \theta_d^2 \Biggl\{ \mathbb{E}_X \left[ \left\{ F_1(X|\theta^{\prime}_1, \ldots, \theta^{\prime}_r) + \theta_d^{m-1} F_m(X|\theta^{\prime}_{r+1}, \ldots, \theta^{\prime}_{d-1}, 1) \right\}^2 \right] \\
&\ \ \ \ \ \ + \theta_d^{2m-1} f_0(\theta^{\prime}_{r+1}, \ldots, \theta^{\prime}_{d-1}, 1)
+ \theta_d^m f_1(\theta^{\prime}_1, \ldots, \theta^{\prime}_{d-1}, 1) \\
&\ \ \ \ \ \ + \theta_d f_2(\theta^{\prime}_1, \ldots, \theta^{\prime}_{d-1}, 1)
+ \theta_d f_3(\theta^{\prime}_1, \ldots, \theta^{\prime}_r) \Biggr\} \\
= &\frac{1}{2} \theta_d^2 a(\theta^{\prime}_1, \ldots, \theta^{\prime}_{d-1}, \theta_d)
\end{align*}
(where $a$ is an analytic function). Considering the point on $U_0 = g^{-1}(O)$ in this local coordinate system $(\theta^{\prime}_1, \ldots, \theta^{\prime}_{d-1}, \theta_d)$, which satisfies $\theta_d = 0$,
\[
a(\theta^{\prime}_1, \ldots, \theta^{\prime}_{d-1}, 0) =
\begin{cases*}
\mathbb{E}_X \left[ F_1^2(X|\theta^{\prime}_1, \ldots, \theta^{\prime}_r) \right] & $(m \geq 2)$ \\
\mathbb{E}_X \left[ \left\{ F_1(X|\theta^{\prime}_1, \ldots, \theta^{\prime}_r) + F_m(X|\theta^{\prime}_{r+1}, \ldots, \theta^{\prime}_{d-1}, 1) \right\}^2 \right] & $(m = 1)$
\end{cases*}
\]
holds. For any point $Q$ on $U_0$ that satisfies $(\theta^{\prime}_1, \ldots, \theta^{\prime}_r) \neq 0$, by Lemma~\ref{mainlem2}(1)(2), the normal crossing of $K(\theta)$ is obtained, and
  \[
\min_{i=1, \ldots, d} \frac{h_i^{(Q)} + 1}{k_i^{(Q)}} = \frac{d-1+1}{2} = \frac{d}{2}
  \]
(multiplicity is 1).

Next, consider the points on $U_0$ that satisfy $(\theta^{\prime}_1, \ldots, \theta^{\prime}_r) = 0$.
First, for the case of $m = 1$,
  \[
  S=
  \left\{
  (\theta^{\prime}_1, \ldots, \theta^{\prime}_d) ~\middle|~
  \begin{array}{l}
    (\theta^{\prime}_1, \ldots, \theta^{\prime}_{r}, \theta_d) = 0 \\
    F_m(X|\theta^{\prime}_{r+1}, \ldots, \theta^{\prime}_{d-1}, 1) = 0 \quad (\text{a.s.})
  \end{array}
  \right\} \subset U_0 
  \]  
For points $Q$ on $U_0$ that are not included in $S$, by Lemma~\ref{mainlem2}(2), $a(\theta^{\prime}_1, \ldots, \theta^{\prime}_{d-1}, 0) > 0$, so the normal crossing of $K(\theta)$ is obtained, and
  \[
  \inf_{Q \in U_0 \setminus S} \min_{i=1, \ldots, d} \frac{h_i^{(Q)} + 1}{k_i^{(Q)}} = \frac{d}{2}
  \]
(multiplicity is 1). Therefore, for $m = 1$, (\ref{eq_mainthm00}) is shown.

On the other hand, for $m \geq 2$, $a(\theta^{\prime}_1, \ldots, \theta^{\prime}_{d-1}, 0) = 0$, and the normal crossing of $K(\theta)$ is not obtained. Therefore, to obtain the normal crossing of $K(\theta)$, it is necessary to further blow-up centered at the subvariety
$\{(\theta^{\prime}_1, \ldots, \theta^{\prime}_{d-1}, \theta_d) \mid \theta^{\prime}_1 = \cdots = \theta^{\prime}_r = \theta_d = 0\}$.

Hereafter, we assume $m \geq 2$ and use the transformed coordinates without the notation ${}^{\prime}$. In this case, note that $r < d$.\\

For (b), in the local coordinates where the exceptional surface in (a) is $\{\theta_d = 0\}$, further blow-up is performed centered at the subvariety $\{(\theta_1, \ldots, \theta_d) \mid \theta_1 = \cdots = \theta_r = \theta_d = 0\}$. The same argument applies to other local coordinates.

(b-1) First, consider the case where the exceptional surface is $\{\theta_i = 0\} (i=1, \ldots, r)$. For example, in the case $i=1$, that is, when we perform a blow-up with $\{\theta_2 = \theta_1 \theta_2^{\prime}, \ldots, \theta_r = \theta_1 \theta_r^{\prime}, \theta_d = \theta_1 \theta_d^{\prime}\}$, the exceptional surface is $\{\theta_1 = 0\}$, and
\begin{align*}
K(\theta) = &\frac{1}{2} \theta_d^2
\Biggl\{ \mathbb{E}_X \left[ \left\{ F_1(X|\theta_1, \ldots, \theta_r) + \theta_d^{m-1} F_m(X|\theta_{r+1}, \ldots, \theta_{d-1}, 1) \right\}^2 \right] \\
&\ \ \ \ \ \ + \theta_d^{2m-1} f_0(\theta_{r+1}, \ldots, \theta_{d-1}, 1)
+ \theta_d^m f_1(\theta_1, \ldots, \theta_{d-1}, 1) \\
&\ \ \ \ \ \ + \theta_d f_2(\theta_1, \ldots, \theta_{d-1}, 1)
+ \theta_d f_3(\theta_1, \ldots, \theta_r) \Biggr\} \\
= &\frac{1}{2} \theta_1^4 \theta_d^{\prime 2} \Biggl\{ \mathbb{E}_X \left[ \left\{ F_1(X|1, \theta^{\prime}_2, \ldots, \theta^{\prime}_r) + \theta_1^{m-2} \theta_d^{\prime m-1} F_m(X|\theta_{r+1}, \ldots, \theta_{d-1}, 1) \right\}^2 \right] \\
&\ \ \ \ \ \ \ \ \ + \theta_1^{2m-3} \theta_d^{\prime 2m-1} f_0(\theta_{r+1}, \ldots, \theta_{d-1}, 1) \\
&\ \ \ \ \ \ \ \ \ + \theta_1^{m-1} \theta_d^{\prime m} f_1(1, \theta_2^{\prime}, \ldots, \theta_r^{\prime}, \theta_{r+1}, \ldots, \theta_{d-1}, 1) \\
&\ \ \ \ \ \ \ \ \ + \theta_1 \theta^{\prime}_d f_2(1, \theta_2^{\prime}, \ldots, \theta_r^{\prime}, \theta_{r+1}, \ldots, \theta_{d-1}, 1)
+ \theta_1^2 \theta^{\prime}_d f_3(1, \theta_2^{\prime}, \ldots, \theta_r^{\prime}) \Biggr\} \\
= &\frac{1}{2} \theta_1^4 \theta_d^{\prime 2} a(\theta_1, \theta^{\prime}_2, \ldots, \theta^{\prime}_r, \theta_{r+1}, \ldots, \theta_{d-1}, \theta^{\prime}_d)
\end{align*}
(where $a$ is an analytic function). Considering the point on $U_0 = g^{-1}(O)$ in this local coordinate system $(\theta_1, \theta^{\prime}_2, \ldots, \theta^{\prime}_r, \theta_{r+1}, \ldots, \theta_{d-1}, \theta^{\prime}_d)$, which satisfies $\theta_1 = 0$,
\begin{align*}
&a(0, \theta^{\prime}_2, \ldots, \theta^{\prime}_r, \theta_{r+1}, \ldots, \theta_{d-1}, \theta^{\prime}_d) \\
=&
\begin{cases*}
\mathbb{E}_X \left[ F_1^2(X|1, \theta^{\prime}_2, \ldots, \theta^{\prime}_r) \right] & $(m \geq 3)$ \\
\mathbb{E}_X \left[ \left\{ F_1(X|1, \theta^{\prime}_2, \ldots, \theta^{\prime}_r) + \theta_d^{\prime} F_m(X|\theta_{r+1}, \ldots, \theta_{d-1}, 1) \right\}^2 \right] & $(m = 2)$
\end{cases*}
\end{align*}
and by Lemma~\ref{mainlem2}(1)(2),
\[
a(0, \theta^{\prime}_2, \ldots, \theta^{\prime}_r, \theta_{r+1}, \ldots, \theta_{d-1}, \theta^{\prime}_d) > 0
\]
is obtained. Therefore, in this local coordinate system, the normal crossing of $K(\theta)$ is obtained at any point $Q$ on $U_0$, and
  \[
  \inf_{Q \in U_0} \min_{i=1, \ldots, d} \frac{h_i^{(Q)} + 1}{k_i^{(Q)}} = \min \left( \frac{d + r - 1 + 1}{4}, \frac{d}{2} \right) = \frac{d + r}{4}
  \]
(multiplicity is 1).

(b-2) Next, consider the case where the exceptional surface is $\{\theta_d = 0\}$. That is, when we perform a blow-up with $\{\theta_1 = \theta_d \theta_1^{\prime}, \ldots, \theta_r = \theta_d \theta_r^{\prime}\}$,
\begin{align*}
K(\theta) = &\frac{1}{2} \theta_d^2
\Biggl\{ \mathbb{E}_X \left[ \left\{ F_1(X|\theta_1, \ldots, \theta_r) + \theta_d^{m-1} F_m(X|\theta_{r+1}, \ldots, \theta_{d-1}, 1) \right\}^2 \right] \\
&\ \ \ \ \ \ + \theta_d^{2m-1} f_0(\theta_{r+1}, \ldots, \theta_{d-1}, 1)
+ \theta_d^m f_1(\theta_1, \ldots, \theta_{d-1}, 1) \\
&\ \ \ \ \ \ + \theta_d f_2(\theta_1, \ldots, \theta_{d-1}, 1)
+ \theta_d f_3(\theta_1, \ldots, \theta_r) \Biggr\} \\
= &\frac{1}{2} \theta_d^4
  \Biggl\{ \mathbb{E}_X \left[ \left\{
  F_1(X|\theta^{\prime}_1, \ldots, \theta^{\prime}_r) + \theta_d^{m-2} F_m(X|\theta_{r+1}, \ldots, \theta_{d-1}, 1)
  \right\}^2 \right] \\
  &\ \ \ \ \ \ + \theta_d^{2m-3} f_0(\theta_{r+1}, \ldots, \theta_{d-1}, 1) \\
  &\ \ \ \ \ \ + \theta_d^{m-1} f_1(\theta_1^{\prime}, \ldots, \theta_r^{\prime}, \theta_{r+1}, \ldots, \theta_{d-1}, 1) \\
  &\ \ \ \ \ \  + \theta_d f_2(\theta_1^{\prime}, \ldots, \theta_r^{\prime}, \theta_{r+1}, \ldots, \theta_{d-1}, 1)
  + \theta_d^2 f_3(\theta_1^{\prime}, \ldots, \theta_r^{\prime}) \Biggr\} \\
= &\frac{1}{2} \theta_d^4 a(\theta^{\prime}_1, \ldots, \theta^{\prime}_r, \theta_{r+1}, \ldots, \theta_d)
\end{align*}
(where $a$ is an analytic function). Considering the point on $U_0 = g^{-1}(O)$ in this local coordinate system $(\theta^{\prime}_1, \ldots, \theta^{\prime}_r, \theta_{r+1}, \ldots, \theta_d)$, which satisfies $\theta_d = 0$,
\begin{align*}
  &a(\theta^{\prime}_1, \ldots, \theta^{\prime}_r, \theta_{r+1}, \ldots, \theta_{d-1}, 0)\\
  =&
  \begin{cases*}
    \mathbb{E}_X \left[ F_1^2(X|\theta^{\prime}_1, \ldots, \theta^{\prime}_r) \right] & $(m \geq 3)$ \\
    \mathbb{E}_X \left[ \left\{ F_1(X|\theta^{\prime}_1, \ldots, \theta^{\prime}_r) + F_m(X|\theta_{r+1}, \ldots, \theta_{d-1}, 1) \right\}^2 \right] & $(m = 2)$
  \end{cases*}
\end{align*}
and for any point $Q$ on $\{\theta_d = 0\}$ that satisfies $(\theta^{\prime}_1, \ldots, \theta^{\prime}_r) \neq 0$, by Lemma~\ref{mainlem2}(1)(2), the normal crossing of $K(\theta)$ is obtained, and
  \[
\min_{i=1, \ldots, d} \frac{h_i^{(Q)} + 1}{k_i^{(Q)}} = \frac{d + r}{4}
  \]
(multiplicity is 1).

  Next, consider any point on $\{\theta_d = 0\}$ that satisfies $(\theta^{\prime}_1, \ldots, \theta^{\prime}_r) = 0$.
  First, for the case of $m = 2$,
  \[
    S=
    \left\{
    (\theta^{\prime}_1, \ldots, \theta^{\prime}_r, \theta_{r+1}, \ldots, \theta_d) ~\middle|~
    \begin{array}{l}
      (\theta^{\prime}_1, \ldots, \theta^{\prime}_r, \theta_d) = 0 \\
      F_m(X|\theta_{r+1}, \ldots, \theta_{d-1}, 1) = 0 \quad (\text{a.s.})
    \end{array}
    \right\} \subset U_0 
  \]  
  For points $Q$ on $U_0$ that are not included in $S$, 
  $a(\theta^{\prime}_1, \ldots, \theta_r^{\prime}, \theta_{r+1}, \ldots, \theta_{d-1}, 0) > 0$
  by Lemma~\ref{mainlem2}(2), so the normal crossing of $K(\theta)$ is obtained, and
  \[
  \inf_{Q \in U_0 \setminus S} \min_{i=1, \ldots, d} \frac{h_i^{(Q)} + 1}{k_i^{(Q)}} = \inf \left\{ \frac{d + r}{4}, \frac{d}{2} \right\} = \frac{d + r}{4}
  \]
(multiplicity is 1). Therefore, for $m = 2$, (\ref{eq_mainthm00}) is shown.

On the other hand, for $m \geq 3$, $a(\theta^{\prime}_1, \ldots, \theta_r^{\prime}, \theta_{r+1}, \ldots, \theta_{d-1}, 0) = 0$, and the normal crossing is not obtained.
Therefore, to obtain the normal crossing of $K(\theta)$, it is necessary to further blow-up centered at the subvariety
$\{(\theta^{\prime}_1, \ldots, \theta^{\prime}_r, $ $\theta_{r+1}, \ldots, \theta_d) \mid \theta^{\prime}_1 = \cdots = \theta^{\prime}_r = \theta_d = 0\}$.\\

Assuming $m \geq 3$ and using the transformed coordinates without the notation ${}^{\prime}$, we have:
\begin{align*}
K(\theta) = &\frac{1}{2}\theta_d^{4}\Biggl\{\mathbb{E}_X\left[\left\{F_1(X|\theta_{1}, \ldots, \theta_{r}) + \theta_d^{m-2} F_m(X|\theta_{r+1}, \ldots, \theta_{d-1}, 1)\right\}^2\right]\\
&\ \ \ \ \ \ + \theta_d^{2m-3} f_0(\theta_{r+1}, \ldots, \theta_{d-1}, 1)
+ \theta_d^{m-1} f_1(\theta_1, \ldots, \theta_{d-1}, 1)\\
&\ \ \ \ \ \ + \theta_d f_2(\theta_1, \ldots, \theta_{d-1}, 1)
+ \theta_d^2 f_3(\theta_1, \ldots, \theta_r)\Biggr\}
\end{align*}
In this coordinate system, we need to find the normal crossing of $K(\theta)$ at any point on the subvariety
$\{(\theta_1, \ldots, \theta_d) \mid \theta_1 = \cdots = \theta_r = \theta_d = 0\}$.\\

(c) Repeating the above discussion, after performing $m-1$ blow-ups, that is, for the initial parameters $(\theta_1, \ldots, \theta_d)$, consider the transformation
\[
\theta_1 = \theta_d^{m-1} \theta_1^{\prime}, \ldots, \theta_r = \theta_d^{m-1} \theta_r^{\prime}, \theta_{r+1} = \theta_d \theta_{r+1}^{\prime}, \ldots, \theta_{d-1} = \theta_d \theta_{d-1}^{\prime}
\]
The Jacobian of this transformation is $\theta_d^{d-r-1} \times \theta_d^{r(m-1)} = \theta_d^{d+r(m-2)-1}$, and $K(\theta)$ can be expressed as:
\begin{align*}
K(\theta) = &\frac{1}{2}\mathbb{E}_X\left[\left\{F_1(X|\theta_{1}, \ldots, \theta_{r}) + F_m(X|\theta_{r+1}, \ldots, \theta_{d})\right\}^2\right]\\
&\ \ \ \ \ \ + f_0(\theta_{r+1}, \ldots, \theta_d) + f_1(\theta_1, \ldots, \theta_d) + f_2(\theta_1, \ldots, \theta_d) + f_3(\theta_1, \ldots, \theta_r)\\
= &\frac{1}{2} \theta_d^{2(m-1)} \Biggl\{\mathbb{E}_X\left[\left\{F_1(X|\theta^{\prime}_{1}, \ldots, \theta^{\prime}_{r}) + \theta_d F_m(X|\theta^{\prime}_{r+1}, \ldots, \theta^{\prime}_{d-1}, 1)\right\}^2\right]\\
&\ \ \ \ \ \ + \theta_d^{3} f_0(\theta^{\prime}_{r+1}, \ldots, \theta^{\prime}_{d-1}, 1)
+ \theta_d^{2} f_1(\theta^{\prime}_1, \ldots, \theta^{\prime}_{d-1}, 1)\\
&\ \ \ \ \ \ + \theta_d f_2(\theta^{\prime}_1, \ldots, \theta^{\prime}_{d-1}, 1)
+ \theta_d^{m-1} f_3(\theta^{\prime}_1, \ldots, \theta^{\prime}_r)\Biggr\}
\end{align*}
In this coordinate system, perform one blow-up centered at $\{(\theta^{\prime}_1, \ldots, \theta^{\prime}_{d-1}, \theta_d) \mid \theta^{\prime}_1 = \cdots = \theta^{\prime}_r = \theta_d = 0\}$.

(c-1) First, consider the case where the exceptional surface is $\{\theta^{\prime}_i = 0\} (i=1, \ldots, r)$. For example, in the case $i=1$, that is, when
\[
\{\theta^{\prime}_2 = \theta^{\prime}_1 \theta_2^{\prime\prime}, \ldots, \theta^{\prime}_r = \theta^{\prime}_1 \theta_r^{\prime\prime}, \theta_d = \theta^{\prime}_1 \theta_d^{\prime}\}
\]
we perform a blow-up, the exceptional surface is $\{\theta^{\prime}_1 = 0\}$, and the Jacobian is
\[
\theta^{\prime r}_1 \cdot \theta_d^{d+r(m-2)-1} = \theta^{\prime d+r(m-1)-1}_1 \cdot \theta_d^{\prime d+r(m-2)-1}
\]
and
\begin{align*}
K(\theta) = &\frac{1}{2} \theta_d^{2(m-1)}
\Biggl\{ \mathbb{E}_X
\left[\left\{F_1(X|\theta^{\prime}_{1}, \ldots, \theta^{\prime}_{r}) + \theta_d F_m(X|\theta^{\prime}_{r+1}, \ldots, \theta^{\prime}_{d-1}, 1)\right\}^2\right]\\
&\ \ \ \ \ \ \ \ \ + \theta_d^{3} f_0(\theta^{\prime}_{r+1}, \ldots, \theta^{\prime}_{d-1}, 1)
+ \theta_d^{2} f_1(\theta^{\prime}_1, \ldots, \theta^{\prime}_{d-1}, 1)\\
&\ \ \ \ \ \ \ \ \ + \theta_d f_2(\theta^{\prime}_1, \ldots, \theta^{\prime}_{d-1}, 1)
+ \theta_d^{m-1} f_3(\theta^{\prime}_1, \ldots, \theta^{\prime}_r)
\Biggr\}\\
= &\frac{1}{2} \theta_1^{\prime 2m} \theta_d^{\prime 2(m-1)} \Biggl\{ \mathbb{E}_X \left[ \left\{ F_1(X|1, \theta^{\prime\prime}_{2}, \ldots, \theta^{\prime\prime}_{r}) + \theta_d^{\prime} F_m(X|\theta^{\prime}_{r+1}, \ldots, \theta^{\prime}_{d-1}, 1) \right\}^2 \right] \\
&\ \ \ \ \ \ \ \ \ 
+ \theta^{\prime}_1 \theta_d^{\prime 3} f_0(\theta^{\prime}_{r+1}, \ldots, \theta^{\prime}_{d-1}, 1) + \theta^{\prime}_1 \theta_d^{\prime 2} f_1(1, \theta_2^{\prime\prime}, \ldots, \theta_r^{\prime\prime}, \theta^{\prime}_{r+1}, \ldots, \theta^{\prime}_{d-1}, 1)\\
&\ \ \ \ \ \ \ \ \ 
+ \theta^{\prime}_1 \theta^{\prime}_d f_2(1, \theta_2^{\prime\prime}, \ldots, \theta_r^{\prime\prime}, \theta^{\prime}_{r+1}, \ldots, \theta^{\prime}_{d-1}, 1)
+ \theta_1^{\prime m} \theta^{\prime m-1}_d f_3(1, \theta_2^{\prime\prime}, \ldots, \theta_r^{\prime\prime}) \Biggr\}\\
= &\frac{1}{2} \theta_1^{\prime 2m} \theta_d^{\prime 2(m-1)} a(\theta^{\prime}_1, \theta^{\prime\prime}_2, \ldots, \theta^{\prime\prime}_r, \theta^{\prime}_{r+1}, \ldots, \theta^{\prime}_d)
\end{align*}
(where $a$ is an analytic function). Considering the point on $U_0 = g^{-1}(O)$ in this local coordinate system $(\theta^{\prime}_1, \theta^{\prime\prime}_2, \ldots, \theta^{\prime\prime}_r, \theta^{\prime}_{r+1}, \ldots, \theta^{\prime}_d)$, which satisfies $\theta^{\prime}_1 = 0$,
\begin{align*}
&a(0, \theta^{\prime\prime}_2, \ldots, \theta^{\prime\prime}_r, \theta^{\prime}_{r+1}, \ldots, \theta^{\prime}_d)\\
=& \mathbb{E}_X \left[ \left\{ F_1(X|1, \theta^{\prime\prime}_{2}, \ldots, \theta^{\prime\prime}_{r}) + \theta_d^{\prime} F_m(X|\theta^{\prime}_{r+1}, \ldots, \theta^{\prime}_{d-1}, 1) \right\}^2 \right]
\end{align*}
and by Lemma~\ref{mainlem2}(1)(2),
\[
a(0, \theta^{\prime\prime}_2, \ldots, \theta^{\prime\prime}_r, \theta^{\prime}_{r+1}, \ldots, \theta^{\prime}_d) > 0
\]
Thus, in this local coordinate system, the normal crossing of $K(\theta)$ is obtained at any point $Q$ on $U_0$, and
  \begin{align*}
  \min_{i=1, \ldots, d} \frac{h_i^{(Q)}+1}{k_i^{(Q)}} =& \min \left\{ \frac{d+r(m-1)-1+1}{2m}, \frac{d+r(m-2)-1+1}{2(m-1)} \right\}\\
  =& \frac{d-r+rm}{2m}
  \end{align*}
Therefore, considering the local coordinates obtained from the first $m-1$ blow-ups,
\[
\inf_{Q \in U_0} \min_{i=1, \ldots, d} \frac{h_i^{(Q)}+1}{k_i^{(Q)}} = \inf_{1 \leq m^{\prime} \leq m} \left\{ \frac{d-r+rm^{\prime}}{2m^{\prime}} \right\} = \frac{d-r+rm}{2m}
\]
(multiplicity is 1).

(c-2) Next, consider the case where the exceptional surface is $\{\theta_d = 0\}$. That is, when we perform a blow-up with $\{\theta^{\prime}_1 = \theta_d \theta_1^{\prime\prime}, \ldots, \theta^{\prime}_{r} = \theta_d \theta_{r}^{\prime\prime}\}$, the Jacobian is $\theta^{r}_d \cdot \theta_d^{d+r(m-2)-1} = \theta_d^{d+r(m-1)-1}$, and
\begin{align*}
K(\theta) = &\frac{1}{2} \theta_d^{2(m-1)}
\Biggl\{ \mathbb{E}_X
\left[\left\{F_1(X|\theta^{\prime}_{1}, \ldots, \theta^{\prime}_{r}) + \theta_d F_m(X|\theta^{\prime}_{r+1}, \ldots, \theta^{\prime}_{d-1}, 1)\right\}^2\right]\\
&\ \ \ \ \ \ + \theta_d^{3} f_0(\theta^{\prime}_{r+1}, \ldots, \theta^{\prime}_{d-1}, 1)
+ \theta_d^{2} f_1(\theta^{\prime}_1, \ldots, \theta^{\prime}_{d-1}, 1)\\
&\ \ \ \ \ \ + \theta_d f_2(\theta^{\prime}_1, \ldots, \theta^{\prime}_{d-1}, 1)
+ \theta_d^{m-1} f_3(\theta^{\prime}_1, \ldots, \theta^{\prime}_r)
\Biggr\}\\
= &\frac{1}{2} \theta_d^{2m}
\Biggl\{ \mathbb{E}_X
\left[\left\{F_1(X|\theta^{\prime\prime}_{1}, \ldots, \theta^{\prime\prime}_{r}) + F_m(X|\theta^{\prime}_{r+1}, \ldots, \theta^{\prime}_{d-1}, 1)\right\}^2\right]\\
&\ \ \ \ \ \ + \theta_d f_0(\theta^{\prime}_{r+1}, \ldots, \theta^{\prime}_{d-1}, 1)
+ \theta_d f_1(\theta_1^{\prime\prime}, \ldots, \theta_r^{\prime\prime}, \theta^{\prime}_{r+1}, \ldots, \theta^{\prime}_{d-1}, 1)\\
&\ \ \ \ \ \ + \theta_d f_2(\theta_1^{\prime\prime}, \ldots, \theta_r^{\prime\prime}, \theta^{\prime}_{r+1}, \ldots, \theta^{\prime}_{d-1}, 1)
+ \theta_d^m f_3(\theta_1^{\prime\prime}, \ldots, \theta_r^{\prime\prime}) \Biggr\}\\
= &\frac{1}{2} \theta_d^{2m} a(\theta^{\prime\prime}_1, \ldots, \theta^{\prime\prime}_r, \theta^{\prime}_{r+1}, \ldots, \theta^{\prime}_{d-1}, \theta_d)
\end{align*}
(where $a$ is an analytic function). Considering the point on $U_0 = g^{-1}(O)$ in this local coordinate system $(\theta^{\prime\prime}_1, \ldots, \theta^{\prime\prime}_r, \theta^{\prime}_{r+1}, \ldots, \theta^{\prime}_{d-1}, \theta_d)$, which satisfies $\theta_d = 0$,
\begin{align*}
&a(\theta^{\prime\prime}_1, \ldots, \theta^{\prime\prime}_r, \theta^{\prime}_{r+1}, \ldots, \theta^{\prime}_{d-1}, 0)\\
=&
\mathbb{E}_X
\left[
\left\{F_1(X|\theta^{\prime\prime}_1, \ldots, \theta^{\prime\prime}_{r}) + F_m(X|\theta^{\prime}_{r+1}, \ldots, \theta^{\prime}_{d-1}, 1)\right\}^2
\right]
\end{align*}
and
    \[
  S=
  \left\{
  (\theta^{\prime\prime}_1, \ldots, \theta^{\prime\prime}_r, \theta^{\prime}_{r+1}, \ldots, \theta^{\prime}_{d-1}, \theta_d)~\middle|~
  \begin{array}{l}
    (\theta^{\prime\prime}_1, \ldots, \theta^{\prime\prime}_{r}, \theta_d)=0 \\
    F_m(X|\theta^{\prime}_{r+1}, \ldots, \theta^{\prime}_{d-1}, 1)= 0 \text{ (a.s.)}
  \end{array}
  \right\} \subset U_0 \]  
At points $Q$ on $U_0$ not included in $S$, by Lemma~\ref{mainlem2}(2),
$a(\theta^{\prime\prime}_1, \ldots, \theta^{\prime\prime}_{r}, \theta^{\prime}_{r+1}, \ldots, \theta^{\prime}_{d-1}, 0)>0$, so the normal crossing of $K(\theta)$ is obtained, and considering the local coordinates obtained from the first $m-1$ blow-ups,
\[
\inf_{Q \in U_0 \setminus S} \min_{i=1, \ldots, d} \frac{h_i^{(Q)}+1}{k_i^{(Q)}}
=\inf_{1 \leq m^{\prime} \leq m} \left\{\frac{d-r+rm^{\prime}}{2m^{\prime}}\right\} = \frac{d-r+rm}{2m}
\]
(multiplicity is 1). Note that at points included in $S$, $a(\theta^{\prime\prime}_1, \ldots, \theta^{\prime\prime}_{r}, \theta^{\prime}_{r+1}, \ldots, \theta^{\prime}_{d-1}, 0) = 0$, and the normal crossing of $K(\theta)$ is not obtained.\\

Finally, we verify (\ref{eq_mainthm01}). Consider the case where all parameters $(\theta_{r+1},\ldots,\theta_d) \neq 0$ satisfy Assumption~\ref{mainass}(3)(ii), that is, $F_m(X|\theta_{r+1},\ldots,\theta_d)$ is linearly independent with the $r$ random variables
\[
\left.\frac{\partial f(X|\theta)}{\partial\theta_{1}}\right|_{(\theta_1,\ldots,\theta_d)=0},
\ldots,
\left.\frac{\partial f(X|\theta)}{\partial\theta_{r}}\right|_{(\theta_1,\ldots,\theta_d)=0}.
\]

In this scenario, for all parameters $(\theta_{r+1}, \ldots, \theta_d) \neq 0$,
\[
F_m(X|\theta_{r+1}, \ldots, \theta_d) \neq 0~~(a.s.)
\]
therefore, $S=\{\}$, and the blow-up $g$ provides normal crossings of $K(\theta)$ at all points $Q$ on $U_0 := g^{-1}(0)$, and the real log canonical threshold $\lambda_O$ at $\theta=0$ is given as follows (multiplicity is 1):
\begin{align*}
\lambda_O=\inf_{Q\in U_0}\left\{\min_{i=1,\ldots,d}{\frac{h_i^{(Q)}+1}{k_i^{(Q)}}}\right\}
= \inf_{Q\in U_0\setminus S}\left\{\min_{i=1,\ldots,d}{\frac{h_i^{(Q)}+1}{k_i^{(Q)}}}\right\}
=\frac{d-r+rm}{2m}
\end{align*}

Moreover, since the blow-up centered at origin $O$ provided normal crossings of $K(\theta)$, in the neighborhood of the origin $O$, using the notation from Theorem~\ref{resolution thm},
\begin{align*}
K^{-1}(0)&=\left\{\theta\in\Theta~|~K(\theta)=0\right\}
=\left\{g(u)~\middle|~a(u)u_1^{k_1}\cdots u_d^{k_d}=0\right\}\\
=&\left\{g(u)~|~u_1\cdots u_d=0\right\}
=\{O\}
\end{align*}
  Particularly, the blow-up $g$ satisfies the conditions of the resolution theorem (Theorem~\ref{resolution thm}).
  Thus, the theorem is proved.

\end{proof}

\begin{rem}\leavevmode\par
From the proof of Main Theorem~\ref{mainthm1}, the following can be understood.
Considering the change in the log-likelihood ratio function $f$ before and after the blow-up,
  \begin{align*}
f(X|\theta) &= F_1(X|\theta_1, \ldots, \theta_r) + F_m(X|\theta_{r+1}, \ldots, \theta_{d}) + (\text{higher order terms})\\
&= \theta_i^m \left\{ F_1(X|\theta^{\prime}_1, \ldots, \theta^{\prime}_r) + F_m(X|\theta^{\prime}_{r+1}, \ldots, \theta^{\prime}_{i-1}, 1, \theta^{\prime}_{i+1}, \ldots, \theta^{\prime}_{d}) + g(\theta_i) \right\}
  \end{align*}
  can be expressed (where $g(0) = 0$). On the other hand, $K(\theta)$ can be expressed as
  \begin{align*}
  &K(\theta)  \\
  =&\frac{1}{2} \theta_i^{2m} \left\{ \mathbb{E}_X
  \left[\left\{ F_1(X|\theta^{\prime}_1, \ldots, \theta^{\prime}_r)
    + F_m(X|\theta^{\prime}_{r+1}, \ldots, \theta^{\prime}_{i-1}, 1, \theta^{\prime}_{i+1}, \ldots, \theta^{\prime}_{d})
    \right\}^2 \right]
  + h(\theta_i) \right\}
  \end{align*}
  (where $h(0) = 0$). From this, it follows that
  \begin{align*}
  &\lim_{\theta\rightarrow 0} \frac{\mathbb{E}_X \left[ f(X|\theta)^2 \right]}{K(\theta)}\\
  = &2 \lim_{\theta_i \rightarrow 0}
    \frac{\mathbb{E}_X
    \left[\left\{ F_1(X|\theta^{\prime}_1, \ldots, \theta^{\prime}_r)
      + F_m(X|\theta^{\prime}_{r+1}, \ldots, \theta^{\prime}_{i-1}, 1, \theta^{\prime}_{i+1}, \ldots, \theta^{\prime}_{d}) + g(\theta_i) \right\}^2 \right]}
    {\mathbb{E}_X \left[\left\{ F_1(X|\theta^{\prime}_1, \ldots, \theta^{\prime}_r)
      + F_m(X|\theta^{\prime}_{r+1}, \ldots, \theta^{\prime}_{i-1}, 1, \theta^{\prime}_{i+1}, \ldots, \theta^{\prime}_{d})
      \right\}^2 \right] + h(\theta_i)}\\
  = &2 \lim_{\theta_i \rightarrow 0}
    \frac{\mathbb{E}_X
    \left[\left\{ F_1(X|\theta^{\prime}_1, \ldots, \theta^{\prime}_r)
      + F_m(X|\theta^{\prime}_{r+1}, \ldots, \theta^{\prime}_{i-1}, 1, \theta^{\prime}_{i+1}, \ldots, \theta^{\prime}_{d}) \right\}^2 \right]}
    {\mathbb{E}_X \left[\left\{ F_1(X|\theta^{\prime}_1, \ldots, \theta^{\prime}_r)
      + F_m(X|\theta^{\prime}_{r+1}, \ldots, \theta^{\prime}_{i-1}, 1, \theta^{\prime}_{i+1}, \ldots, \theta^{\prime}_{d})
      \right\}^2 \right]}\\
  = &2
  \end{align*}
  follows. This is consistent with \cite[Theorem 6.3]{watanabe1}.

\end{rem}

\newpage
\section{Conditions for Applying Main Theorem~\ref{mainthm}}\label{sec:chg}
To use Main Theorem~\ref{mainthm1}, it is necessary to satisfy Assumption~\ref{mainass} concerning the log-likelihood ratio function $f$.
To satisfy Assumption~\ref{mainass}, the variable transformation can be specifically constructed in the following order: (i)-(iv). Here, we first describe the concrete method of construction. In this method of construction, general theories about variable transformations such as Lemma~\ref{lem3} and Corollary~\ref{mainprop} will be used, but their proofs will be deferred. References to such propositions are made by citing them as [Corollary~\ref{mainprop}] at the relevant points in the discussion.
Unless otherwise confusing, the same notation $\theta$ will be used before and after the variable transformation.

\begin{proof}[\textbf{Construction of Variable Transformations to Satisfy Assumption~\ref{mainass}}]\leavevmode\par
\begin{itemize}
  \item [(i)] 
    Let \(V_1\) be the vector space over \(\mathbb{R}\) generated by the first derivatives of \(f\) with respect to \(\theta_1, \ldots, \theta_d\). From [Lemma~\ref{lem3}], the dimension of \(V_1\) is \(r\). Therefore, by suitably permuting coordinates, we can take as a basis:
\begin{equation}\label{eq:thmchg02}
\left\{
\left.\frac{\partial f}{\partial\theta_1}\right|_{(\theta_1,\ldots,\theta_d)=0},
\ldots,
\left.\frac{\partial f}{\partial\theta_r}\right|_{(\theta_1,\ldots,\theta_d)=0}
\right\}
\end{equation}
This satisfies Assumption~\ref{mainass}(1).
In the following, consider the set of \(n\)-th derivatives of \(f\) with respect to variables \(\theta_{r+1},\ldots,\theta_d\) (\(n \geq 1\)):
\[
D_n:=
\left\{
\left.
\frac{\partial^n f}{\partial\theta_{r+1}^{i_{r+1}}\cdots\partial\theta_d^{i_d}}
\right|_{(\theta_1,\ldots,\theta_d)=0}
\middle|~
i_{r+1}+\cdots+i_d=n
\right\}
\]
% with an appropriate ordering, denoted by \(D_n=\{B_1,B_2,\ldots\}\).
We denote the vector space over \(\mathbb{R}\) generated by \(D_n\) as \(W_n\).

  \item [(ii)]
    For the first derivatives of \(f\) with respect to \(\theta_{r+1}, \ldots, \theta_d\) (e.g., the derivative with respect to \(\theta_{r+1}\)), consider the linear relationship
\[
\left.\frac{\partial f}{\partial\theta_{r+1}}\right|_{(\theta_1,\ldots,\theta_d)=0} =
\sum_{k=1}^r a_k \cdot
\left.\frac{\partial f}{\partial\theta_k}\right|_{(\theta_1,\ldots,\theta_d)=0}, ~~a_k \in \mathbb{R}
\]
which defines a coordinate transformation
\begin{equation}\label{eq:thmchg03}
\theta_1^\prime := \theta_1 + a_1 \cdot \theta_{r+1},
\ldots,
\theta_r^\prime := \theta_r + a_r \cdot \theta_{r+1}
\end{equation}
Performing this transformation affects no other first derivatives, and in the new coordinates,
\[
\left.\frac{\partial f}{\partial\theta_{r+1}}\right|_{(\theta_1,\ldots,\theta_d)=0} = 0 ~~\text{a.s.}
\]
as stated in [Corollary~\ref{mainprop}]. 
By repeating these transformations, in the transformed coordinates, the vector space \(W_1\) becomes \(\{0\}\).

  \item [(iii)]
    Starting with $n=2$, if $\dim(V_1+W_n)=r$, then $V_1+W_n=V_1$, so any element of $D_n$ can be represented as a linear combination from (\ref{eq:thmchg02}). Specifically,
\[
\left.\frac{\partial^{n} f(X|\theta)}
{\partial\theta_{r+1}^{i_{r+1}}\cdots\partial\theta_{d}^{i_{d}}}\right|_{(\theta_1,\ldots,\theta_d)=0}=
\sum_{k=1}^r a_k\cdot
\left.\frac{\partial f}{\partial\theta_k}\right|_{(\theta_1,\ldots,\theta_d)=0},
~~ a_k\in\mathbb{R}
\]
Given this representation, the coordinate transformation
\begin{equation}\label{eq:thmchg04}
\begin{cases}
  \theta_1^\prime:=\theta_1+\frac{a_1}{i_{r+1}!\cdots i_d!}\theta_{r+1}^{i_{r+1}}\cdots\theta_{d}^{i_{d}}\\
  \vdots\\
  \theta_r^{\prime}:=\theta_r+\frac{a_r}{i_{r+1}!\cdots i_d!}\theta_{r+1}^{i_{r+1}}\cdots\theta_{d}^{i_{d}}
\end{cases}
\end{equation}
can be performed without changing other derivatives of order $n$ or lower, ensuring that
\[
\left.\frac{\partial^{n}f(X|\theta)}
{\partial\theta_{r+1}^{i_{r+1}}\cdots\partial\theta_{d}^{i_{d}}}
\right|_{(\theta_1,\ldots,\theta_d)=0}
=0~~\text{a.s.}
\]
[Corollary~\ref{mainprop}]. This coordinate transformation can be applied to any element of $D_n$, resulting in $W_n=\{0\}$. Next, by incrementing $n$ by 1 and repeating as long as $\dim(V_1+W_n)=r$, $W_n$ can be kept at $\{0\}$ for this $n$.

  \item [(iv)]
    Consider the case where for some $n$, $\dim(V_1+W_n)>r$. 
    Let us denote this particular $n$ as $m$. 
    Since $W_1=\cdots=W_{m-1}=\{0\}$, Assumption~\ref{mainass}(2) is satisfied. 
    Therefore, we only need to perform a coordinate transformation that satisfies Assumption~\ref{mainass}(3).

Let $r+s:=~\dim(V_1+W_m)~ (s\geq 1)$ and use elements $B_1,\ldots,B_s$ of $D_m$ to form a basis of the vector space $V_1+W_m$:
\[
\left\{
\left.\frac{\partial f}{\partial\theta_1}\right|_{(\theta_1,\ldots,\theta_d)=0},
\ldots,
\left.\frac{\partial f}{\partial\theta_r}\right|_{(\theta_1,\ldots,\theta_d)=0}
,B_1,\ldots,B_s
\right\}.
\]
Then, any element $B \in D_m\setminus\{B_1,\ldots,B_s\}$, also being an element of $V_1+W_m$, can be expressed as
\begin{align*}
B:=\left.\frac{\partial^{m} f(X|\theta)}
{\partial\theta_{r+1}^{i_{r+1}}\cdots\partial\theta_{d}^{i_{d}}}\right|_{(\theta_1,\ldots,\theta_d)=0}
=\sum_{k=1}^r a_k\cdot\left.\frac{\partial f}{\partial\theta_k}\right|_{(\theta_1,\ldots,\theta_d)=0}
+ \sum_{j=1}^s b_j\cdot B_j
\end{align*}
Performing the coordinate transformation
\begin{equation}\label{eq:thmchg05}
\begin{cases}
  \theta_1^\prime:=\theta_1+\frac{a_1}{i_{r+1}!\cdots i_d!}\theta_{r+1}^{i_{r+1}}\cdots\theta_{d}^{i_{d}}\\
  \vdots\\
  \theta_r^{\prime}:=\theta_r+\frac{a_r}{i_{r+1}!\cdots i_d!}\theta_{r+1}^{i_{r+1}}\cdots\theta_{d}^{i_{d}}
\end{cases}
\end{equation}
allows for
\[
B=\sum_{j=1}^s b_j B_j
\]
without changing other derivatives up to order $m$ [Corollary~\ref{mainprop}]. 
By repeating this for all elements of $D_m\setminus\{B_1,\ldots,B_s\}$, any element of $D_m$ can be expressed as a linear combination of $\{B_1,\ldots,B_s\}$. 
Therefore $W_m$ is generated by $\{B_1,\ldots,B_s\}$ over $\mathbb{R}$. 
Since these generators are linearly independent from the generators of $V_1$ (\ref{eq:thmchg02}), it follows that $V_1\cap W_m=\{0\}$.

We verify that in these coordinates Assumption~\ref{mainass}(3) is satisfied. 
Consider the $m$-th term $F_m$ in the Taylor expansion of $f$ around $(\theta_1,\ldots,\theta_d)=0$. 
For parameters $(\theta_{r+1},\ldots,\theta_d)$ where $F_m \neq 0$ (a.s.), it suffices to show that $F_m$ is linearly independent from (\ref{eq:thmchg02}). 
This follows from $F_m \in W_m$ and $V_1 \cap W_m = \{0\}$.
\end{itemize}
  
\end{proof}

\begin{rem}
  Ultimately, to satisfy Assumption~\ref{mainass}, it is necessary to perform the variable transformations (\ref{eq:thmchg03}), (\ref{eq:thmchg04}), and (\ref{eq:thmchg05}) defined by the linear dependency of the random variables. Furthermore, performing these coordinate transformations corresponds to establishing the conditions:
\[
  V_1 = V_1 + W_1 = V_1 + W_2 = \cdots = V_1 + W_{m-1} \subsetneq V_1 + W_m,
\]
where $W_1 = \cdots = W_{m-1} = \{0\},~ V_1 \cap W_m = \{0\}$.
\end{rem}

\begin{ex}[Case of $m=2$]
  In the statistical model (\ref{eq:exkongou01}) of Example~\ref{ex:m=2}, the variable transformation 
  \begin{equation}\label{eq:exkongou02}
  \theta_1^{\prime} = \theta_1 + \theta_2    
  \end{equation}
  was performed from the beginning. This is actually the variable transformation (\ref{eq:thmchg03}).
  In fact, it is more natural to set the statistical model as
  \[
  p(X=x |\theta_1, \theta_2) :=
  \frac{1}{2} \cdot \tilde{p}\left(X=x \middle| \theta_1 + \frac{1}{2}\right)
  + \frac{1}{2} \cdot \tilde{p}\left(X=x \middle| \theta_2 + \frac{1}{2}\right)
  \quad (x=0,1,2),
  \]
  but
  \begin{align*}
  \left. \frac{\partial f}{\partial \theta_1} \right|_{(\theta_1, \theta_2) = 0}
  =
  \begin{cases}
    2, & x=0\\
    0, & x = 1\\
    -2, & x=2
  \end{cases}
  ,~~~~
  \left. \frac{\partial f}{\partial \theta_2} \right|_{(\theta_1, \theta_2) = 0}
  =
  \begin{cases}
    2, & x=0\\
    0, & x = 1\\
    -2, & x=2
  \end{cases}
  \end{align*}
  and there is a linear dependency between them:
  \[
  \left. \frac{\partial f}{\partial \theta_2} \right|_{(\theta_1, \theta_2) = 0}
  = \left. \frac{\partial f}{\partial \theta_1} \right|_{(\theta_1, \theta_2) = 0}.
  \]
  The variable transformation (\ref{eq:thmchg03}) determined from this is exactly (\ref{eq:exkongou02}). As calculated in Example~\ref{ex:m=2}, after the variable transformation,
  \[
  \left. \frac{\partial f}{\partial \theta_2} \right|_{(\theta^{\prime}_1, \theta_2) = 0} = 0.
  \]
    It has already been confirmed in Example~\ref{ex:m=2} that the statistical model after this variable transformation satisfies Assumption~\ref{mainass} in the case of $m=2$.
\end{ex}

\subsection{On Assumption~\ref{mainass}(1)}

%---------------------Main Lem---------------------
\begin{lem}\label{mainlem}[Characterization of the Linear Independence of Random Variables]
  For \(n\) random variables \(X_1, \ldots, X_n\), let \(\Sigma := (\mathbb{E}[X_i X_j])_{1 \leq i, j \leq n} \in \mathbb{R}^{n \times n}\).
  \begin{itemize}
    \item [(1)] \(\Sigma\) is non-negative definite, and the following two conditions are equivalent:
      \begin{itemize}
        \item [(a)] \(\Sigma\) is positive definite.
        \item [(b)] The random variables \(X_1, \ldots, X_n\) are linearly independent over \(\mathbb{R}\).
      \end{itemize}
    \item [(2)] Let \(V\) be the vector space over \(\mathbb{R}\) spanned by the random variables \(X_1, \ldots, X_n\). Then, \(\text{rank}(\Sigma) = \text{dim}(V)\).
  \end{itemize}
\end{lem}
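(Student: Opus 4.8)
The plan is to recognize $\Sigma$ as the Gram matrix of $X_1,\ldots,X_n$ with respect to the bilinear form $\langle Y,Z\rangle:=\mathbb{E}[YZ]$ on $L^2$, so that both parts reduce to standard facts about Gram matrices once $\ker\Sigma$ is identified with the space of linear relations among the $X_i$. The computational heart is the single identity
\[
c^{\top}\Sigma c=\sum_{i,j=1}^n c_i c_j\,\mathbb{E}[X_iX_j]=\mathbb{E}\!\left[\left(\sum_{i=1}^n c_iX_i\right)^{2}\right]\qquad (c\in\mathbb{R}^n),
\]
immediate from linearity of expectation. It shows at once that $\Sigma$ is non-negative definite, and more precisely that $c^{\top}\Sigma c=0$ holds if and only if $\sum_i c_iX_i=0$ almost surely.

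For part (1) I would argue by contraposition: $\Sigma$ is \emph{not} positive definite exactly when some nonzero $c\in\mathbb{R}^n$ satisfies $c^{\top}\Sigma c=0$, and by the identity above this happens exactly when some nontrivial linear combination $\sum_i c_iX_i$ vanishes a.s., i.e.\ exactly when $X_1,\ldots,X_n$ are linearly dependent in the a.s.\ sense of the paper. Negating both sides yields the equivalence (a)$\Leftrightarrow$(b).

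For part (2) I would first upgrade the identity using non-negative definiteness of the symmetric matrix $\Sigma$: for such a matrix $\Sigma c=0$ if and only if $c^{\top}\Sigma c=0$ (the forward direction is trivial; the converse follows, for instance, from the Cauchy--Schwarz inequality $|c^{\top}\Sigma d|^{2}\le (c^{\top}\Sigma c)(d^{\top}\Sigma d)$ for all $d$, or from diagonalizing $\Sigma$). Hence $\ker\Sigma=\{c\in\mathbb{R}^n:\sum_i c_iX_i=0\ \text{a.s.}\}$, which is precisely the kernel of the linear map $\Phi\colon\mathbb{R}^n\to L^2$, $\Phi(c)=\sum_i c_iX_i$, whose image is $V$. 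Rank--nullity applied to $\Phi$ then gives $\dim V = n-\dim\ker\Sigma = \operatorname{rank}\Sigma$, which is the claim.

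The only real subtlety, and the step I would isolate and state explicitly, is the implication $c^{\top}\Sigma c=0\Rightarrow\Sigma c=0$ for non-negative definite $\Sigma$ used in part (2), together with being scrupulous that the ``$=0$ a.s.'' quantifier in the paper's definition of linear independence of random variables coincides exactly with vanishing of the $L^2$ seminorm of $\sum_i c_iX_i$. Once these are pinned down, everything else is bookkeeping.
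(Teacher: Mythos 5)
Your proof is correct. For part (1) you take exactly the paper's route: the identity \(c^{\top}\Sigma c=\mathbb{E}\bigl[(\sum_i c_iX_i)^2\bigr]\) gives non-negative definiteness and, via the vanishing locus of the quadratic form, the equivalence of (a) and (b). For part (2), however, your argument genuinely differs from the paper's. The paper diagonalizes: it takes a matrix \(P\) with \(P\Sigma P^{\top}=\operatorname{diag}(D,\boldsymbol{0})\), splits \(PA(X)\) into a block \(B(X)\) with invertible Gram matrix \(D\) (hence linearly independent by part (1)) and a block \(C(X)\) with zero Gram matrix (hence \(0\) a.s.), and reads off \(\dim V=r\) from \(\dim\langle B,C\rangle\). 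You instead identify \(\ker\Sigma\) with the kernel of the evaluation map \(\Phi\colon c\mapsto\sum_i c_iX_i\) and apply rank--nullity, which requires the auxiliary fact that \(c^{\top}\Sigma c=0\Rightarrow\Sigma c=0\) for non-negative definite \(\Sigma\) --- a step you correctly isolate and justify. Your route is slightly more economical (no explicit change of basis, and no need to discuss the transformed random vector), while the paper's construction has the side benefit of exhibiting an explicit linearly independent subfamily spanning \(V\); both are complete, and your attention to the equivalence between ``\(=0\) a.s.''\ and vanishing \(L^2\) seminorm matches the convention the paper uses for linear independence of random variables.
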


%Proof
\begin{proof}
  Let \(A(X) := (X_1, \ldots, X_n)^{\top} \in \mathbb{R}^n\), then \(\Sigma = \mathbb{E}_X\left[A(X) A(X)^{\top}\right]\).
  In the following, let \(V = \left<A(X)\right>\) denote the vector space over \(\mathbb{R}\) spanned by the random variables \(X_1, \ldots, X_n\).
  \begin{itemize}
    \item [(1)] For \(\bm{u} := (u_1, \ldots, u_n)^{\top} \in \mathbb{R}^n\),
      \[
        \bm{u}^{\top} \Sigma \bm{u} = \bm{u}^{\top} \mathbb{E}_X \left[A(X) A(X)^{\top}\right] \bm{u} = \mathbb{E}_X \left[\left\|A(X)^{\top} \bm{u}\right\|^2\right] \geq 0
      \]
      \[
        \bm{u}^{\top} \Sigma \bm{u} = 0 \ \Leftrightarrow \ A(X)^{\top} \bm{u} = 0 \ \text{a.s.}
      \]
      Therefore, \(\Sigma\) is non-negative definite and two conditions (a) and (b) are equivalent.
    \item[(2)]

    Let \(r := \text{rank}(\Sigma)\). There exists a real symmetric matrix \(P \in \mathbb{R}^{n \times n}\) such that, using a regular diagonal matrix \(D \in \mathbb{R}^{r \times r}\),
\[
  P~\mathbb{E}[A(X)A(X)^{\top}]P^{\top} = 
  \left(
  \begin{array}{c|c}
    D & \boldsymbol{0}\\\hline
    \boldsymbol{0} & \boldsymbol{0}
  \end{array}
  \right)
\]
Here, if we set \(PA(X) = \left(B(X)^{\top}, C(X)^{\top}\right)^{\top}\), where \(B(X) \in \mathbb{R}^r\) and \(C(X) \in \mathbb{R}^{n-r}\), then
\begin{align*}
  \mathbb{E}\left[B(X)B(X)^{\top}\right] = D,~
  \mathbb{E}\left[C(X)C(X)^{\top}\right] = \boldsymbol{0}
\end{align*}
From (1), \(B(X)\) is linearly independent, and \(C(X) = \boldsymbol{0}\) (a.s.) follows. Consequently,
\[
\text{dim}V
= \text{dim}\left<A(X)\right> 
= \text{dim}\left<PA(X)\right>
= \text{dim}\left<B(X), C(X)\right> = r
\]
follows.
  \end{itemize}
\end{proof}

\begin{lem}\label{lem3}
  If the rank of the Fisher information matrix $I$ is $r(>0)$, then the dimension of the $\mathbb{R}$-vector space $V_1$ generated by the first derivatives of $f$ with respect to $\theta_1, \ldots, \theta_d$ is $r$.
\end{lem}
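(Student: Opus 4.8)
The plan is to recognize the Fisher information matrix $I$ as precisely the Gram-type matrix $\Sigma$ that appears in Lemma~\ref{mainlem}, built from the $d$ random variables that generate $V_1$, and then quote Lemma~\ref{mainlem}(2).

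First I would note that, since $f(x|\theta)=\log p(x|\theta_*)-\log p(x|\theta)$, we have $\partial f(x|\theta)/\partial\theta_i|_{\theta=\theta_*}=-\partial\log p(x|\theta)/\partial\theta_i|_{\theta=\theta_*}$, so $V_1$ is the $\mathbb{R}$-span of the score components $\partial\log p(X|\theta)/\partial\theta_i|_{\theta=\theta_*}$ for $i=1,\dots,d$. Next I would record the standard fact that under realizability $q=p(\cdot|\theta_*)$ and the assumed interchangeability of expectation and $\theta$-differentiation, the score has mean zero at $\theta=\theta_*$: differentiating the identity $\int p(x|\theta)\,dx=1$ yields $\mathbb{E}_X[\partial\log p(X|\theta)/\partial\theta_i|_{\theta_*}]=0$. (Equivalently one may invoke Remark~\ref{remIJ}, where $I=J$ and $J_{ij}$ is already written as a second moment.) Hence the covariance defining $I$ collapses to
\[
I_{ij}=\mathbb{E}_X\!\left[\left.\frac{\partial f(X|\theta)}{\partial\theta_i}\right|_{\theta_*}\cdot\left.\frac{\partial f(X|\theta)}{\partial\theta_j}\right|_{\theta_*}\right],\qquad 1\le i,j\le d .
\]

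Now set $X_i:=\partial f(X|\theta)/\partial\theta_i|_{\theta=\theta_*}$. Then $I=(\mathbb{E}[X_iX_j])_{1\le i,j\le d}$ is exactly the matrix $\Sigma$ of Lemma~\ref{mainlem} associated with the random variables $X_1,\dots,X_d$, while by definition $V_1=\langle X_1,\dots,X_d\rangle$. Applying Lemma~\ref{mainlem}(2) gives $\dim V_1=\mathrm{rank}(\Sigma)=\mathrm{rank}(I)=r$, which is the assertion. The argument is short, and the only step that takes any care is the identification of $I$ with the second-moment matrix of the scores (equivalently, that the scores are centered), which is precisely where realizability and the interchange of integration and differentiation are used; after that it is a one-line application of Lemma~\ref{mainlem}(2).
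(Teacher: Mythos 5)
Your proposal is correct and follows essentially the same route as the paper: identify $I$ with the second-moment matrix $\mathbb{E}_X[A(X)A(X)^{\top}]$ of the first derivatives of $f$ (the paper delegates this identification to Remark~\ref{remIJ}, while you spell out the zero-mean score argument) and then apply Lemma~\ref{mainlem}(2).
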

         
\begin{proof}
  Let
  \begin{align*}
    A(X):=&\left(A_1(X), \ldots, A_d(X)\right)^{\top}\in\mathbb{R}^d,\\
    A_i(X):=&\left.\frac{\partial f(X|\theta)}{\partial\theta_{i}}\right|_{\theta=\theta_*}~(i=1, \ldots, d).
  \end{align*}
  As noted in Remark~\ref{remIJ}, since $\mathbb{E}_X\left[A(X)A(X)^{\top}\right]\in\mathbb{R}^{d\times d}$ coincides with the Fisher information matrix $I$, and the rank of this matrix is $r$. Therefore, it follows from Lemma~\ref{mainlem}(2) that $\dim V_1 = r$.\\
\end{proof}

\subsection{On Assumption~\ref{mainass}(2) and (3)}

\begin{prop}\label{lem:henkan}[Properties of Coordinate Transformation]
Let $r$ be an integer greater than or equal to 1. Fix some non-negative integers $i_{r+1}, \ldots, i_{d}$ such that $i_{r+1} + \cdots + i_d \geq 1$. For $k=1, \ldots, r$, define
\[
a_k(\theta_{r+1}, \ldots, \theta_d) := \frac{c_k}{i_{r+1}! \cdots i_d!} \cdot \theta_{r+1}^{i_{r+1}} \cdots \theta_d^{i_d}, \quad c_k \in \mathbb{R},
\]
and consider the coordinate transformation $\varphi:(\theta_1, \ldots, \theta_d) \mapsto (\theta^{\prime}_1, \ldots, \theta^{\prime}_d)$ given by
\begin{align*}
\theta^{\prime}_1 &= \theta_1 + a_1(\theta_{r+1}, \ldots, \theta_d), \ldots, \theta^{\prime}_r = \theta_r + a_r(\theta_{r+1}, \ldots, \theta_d), \\
\theta^{\prime}_{r+1} &= \theta_{r+1}, \ldots, \theta^{\prime}_d = \theta_d.
\end{align*}
This transformation satisfies the following:
\begin{itemize}
  \item [(1)] $\varphi(0) = 0$ and $|\mathrm{det} \ \varphi^{\prime}(0)| = 1$, and it is bijective and analytic.
  \item [(2)] For $j = 1, \ldots, r$, $\frac{\partial}{\partial \theta^{\prime}_j} = \frac{\partial}{\partial \theta_j}$ holds.
  \item [(3)] For any $(h_{r+1}, \ldots, h_d) \in \mathbb{Z}_{\geq 0}^{d-r}$ such that $h_{r+1} + \cdots + h_d \leq i_{r+1} + \cdots + i_d$,
  \begin{align*}
  &\left. \frac{\partial^{h_{r+1} + \cdots + h_d}}{\partial \theta_{r+1}^{\prime h_{r+1}} \cdots \partial \theta_d^{\prime h_d}} \right|_{(\theta^{\prime}_1, \ldots, \theta^{\prime}_d) = 0} \\
  =& \left. \frac{\partial^{h_{r+1} + \cdots + h_d}}{\partial \theta_{r+1}^{h_{r+1}} \cdots \partial \theta_d^{h_d}} \right|_{\theta = 0} -
  \begin{cases}
    0 & \text{if } (h_{r+1}, \ldots, h_d) \neq (i_{r+1}, \ldots, i_d), \\
    \sum_{k=1}^r c_k \cdot \left. \frac{\partial}{\partial \theta_k} \right|_{\theta = 0} & \text{if } (h_{r+1}, \ldots, h_d) = (i_{r+1}, \ldots, i_d),
  \end{cases}
  \end{align*}
  holds.
\end{itemize}
\end{prop}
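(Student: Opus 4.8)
The plan is to prove (1) and (2) by direct computation with the explicit polynomial inverse of $\varphi$, and to reduce (3) to a single Taylor expansion followed by a degree count; the third part is where the actual content lies.

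For (1), note that each $a_k$ is a nonzero scalar multiple of the monomial $\theta_{r+1}^{i_{r+1}}\cdots\theta_d^{i_d}$ with $i_{r+1}+\cdots+i_d\ge 1$, so $a_k(0)=0$ and hence $\varphi(0)=0$. The Jacobian $\varphi'(\theta)$ is block triangular: its $(k,\ell)$ entry $\partial\theta'_k/\partial\theta_\ell$ is $\delta_{k\ell}$ whenever $\ell\le r$ (because $a_k$ does not involve $\theta_1,\ldots,\theta_r$) and also whenever $k>r$, so $\det\varphi'(\theta)\equiv 1$ and in particular $|\det\varphi'(0)|=1$. Both $\varphi$ and its inverse $\psi$, given by $\theta_k=\theta'_k-a_k(\theta'_{r+1},\ldots,\theta'_d)$ for $k\le r$ and $\theta_j=\theta'_j$ for $j>r$, are polynomial maps, and $\varphi\circ\psi=\psi\circ\varphi=\mathrm{id}$ is checked directly; this gives bijectivity and analyticity. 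For (2), I would apply the chain rule along $\psi$: for $j\le r$ one has $\partial\theta_k/\partial\theta'_j=\delta_{kj}$ when $k\le r$ (since $a_k$ does not depend on $\theta'_1,\ldots,\theta'_r$) and $\partial\theta_k/\partial\theta'_j=0$ when $k>r$, so $\partial/\partial\theta'_j=\sum_k(\partial\theta_k/\partial\theta'_j)\,\partial/\partial\theta_k=\partial/\partial\theta_j$.

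For (3), let $g$ be an arbitrary analytic function near the origin (in the intended application $g=f(x|\cdot)$ for fixed $x$), and set $I:=i_{r+1}+\cdots+i_d\ (\ge 1)$, $H:=h_{r+1}+\cdots+h_d\ (\le I)$. Since the operator $\partial^{H}/\partial\theta_{r+1}^{\prime h_{r+1}}\cdots\partial\theta_d^{\prime h_d}$ involves only $\theta'_{r+1},\ldots,\theta'_d$ and we evaluate at the origin, it is enough to expand the restricted function $\theta'\mapsto g\bigl(\psi(0,\ldots,0,\theta'_{r+1},\ldots,\theta'_d)\bigr)=g\bigl(-a_1,\ldots,-a_r,\theta'_{r+1},\ldots,\theta'_d\bigr)$ in $\theta'_{r+1},\ldots,\theta'_d$ and extract $h_{r+1}!\cdots h_d!$ times the coefficient of $\theta_{r+1}^{\prime h_{r+1}}\cdots\theta_d^{\prime h_d}$. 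Taylor-expanding $g$ in its first $r$ arguments about $(0,\ldots,0,\theta'_{r+1},\ldots,\theta'_d)$ yields
\[
\sum_{\beta\in\mathbb{Z}_{\ge 0}^r}\frac{(-1)^{|\beta|}}{\beta!}\,a_1^{\beta_1}\cdots a_r^{\beta_r}\,\bigl(\partial_{\theta_1}^{\beta_1}\cdots\partial_{\theta_r}^{\beta_r}g\bigr)\bigl(0,\ldots,0,\theta'_{r+1},\ldots,\theta'_d\bigr).
\]
Because each $a_k$ is a monomial of total degree exactly $I$ in $\theta'_{r+1},\ldots,\theta'_d$, the $\beta$-term has $\theta'$-degree at least $|\beta|\,I$; since $H\le I$ and $I\ge 1$, only $\beta=0$ and $|\beta|=1$ can contribute. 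The $\beta=0$ term reproduces $\partial^{H}g/\partial\theta_{r+1}^{h_{r+1}}\cdots\partial\theta_d^{h_d}\big|_{\theta=0}$, and a $\beta=e_k$ term equals $-a_k\,(\partial_{\theta_k}g)(0,\ldots,0,\theta'_{r+1},\ldots,\theta'_d)$, whose part of total degree $I$ is $-\frac{c_k}{i_{r+1}!\cdots i_d!}\,\theta_{r+1}^{\prime i_{r+1}}\cdots\theta_d^{\prime i_d}\,\frac{\partial g}{\partial\theta_k}\big|_{\theta=0}$. Such a term feeds the coefficient of $\theta_{r+1}^{\prime h_{r+1}}\cdots\theta_d^{\prime h_d}$ only when $(h_{r+1},\ldots,h_d)=(i_{r+1},\ldots,i_d)$, and then multiplying by $h_{r+1}!\cdots h_d!=i_{r+1}!\cdots i_d!$ and summing over $k=1,\ldots,r$ gives precisely $-\sum_{k=1}^r c_k\,\frac{\partial g}{\partial\theta_k}\big|_{\theta=0}$. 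Assembling the two cases yields the claimed identity.

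I expect the only genuine obstacle to be making the degree count in (3) watertight: one must verify that every term with $|\beta|\ge 2$ contributes a monomial of total degree at least $2I>I\ge H$ and so drops out entirely, and that in the borderline case $H=I$ the sole surviving first-order contribution is obtained by pairing $a_k$ with the \emph{constant} term of $\partial_{\theta_k}g$ and requires the exact monomial match $(h_{r+1},\ldots,h_d)=(i_{r+1},\ldots,i_d)$. Once this is in place, parts (1) and (2) are purely mechanical.
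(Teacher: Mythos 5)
Your proof is correct, including the delicate part (3). The route you take there differs from the paper's in mechanism, though both rest on the same key fact that each $a_k$ is a \emph{homogeneous} monomial of degree $I:=i_{r+1}+\cdots+i_d$ while only $H\le I$ derivatives are taken. The paper stays at the operator level: it writes $\frac{\partial}{\partial\theta'_j}=\frac{\partial}{\partial\theta_j}-\sum_{k=1}^r\frac{\partial a_k}{\partial\theta_j}\frac{\partial}{\partial\theta_k}$ for $j>r$, composes $H$ such operators, and observes that every cross term carries a derivative of some $a_k$ of order at most $H-1\le I-1$, which vanishes at the origin by homogeneity, leaving only the term $-\sum_k\frac{\partial^H a_k}{\partial\theta^h}\big|_0\frac{\partial}{\partial\theta_k}$, equal to $-\sum_k c_k\frac{\partial}{\partial\theta_k}\big|_0$ exactly when $(h)=(i)$. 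You instead pull back through the explicit inverse $\psi$, restrict to the slice $\theta'_1=\cdots=\theta'_r=0$ (legitimate, since the operator involves only $\theta'_{r+1},\ldots,\theta'_d$), Taylor-expand $g(-a_1,\ldots,-a_r,\theta'_{r+1},\ldots,\theta'_d)$ in the first $r$ slots, and kill everything with $|\beta|\ge 2$ by the degree bound $|\beta|I\ge 2I>H$; the $|\beta|=1$ terms survive only via the constant term of $\partial_{\theta_k}g$ and only under the exact monomial match. Your version trades the operator bookkeeping (tracking how derivatives distribute over the variable coefficients $\partial a_k/\partial\theta_j$ in an $H$-fold product) for a cleaner coefficient-extraction argument, at the small cost of introducing a test function $g$ and the inverse map explicitly; the paper's version avoids the inverse entirely and is closer in form to how the identity is actually used later (as an identity of differential operators). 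Parts (1) and (2) match the paper's argument.
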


\begin{proof}
  (1) is evident. (2) follows from the relationship between partial derivatives before and after the variable transformation:
\begin{alignat*}{2}
\frac{\partial}{\partial\theta^{\prime}_j}
&=\sum_{k=1}^d\frac{\partial}{\partial\theta_{k}}\cdot\frac{\partial\theta_{k}}{\partial\theta^{\prime}_j}
=\sum_{k=1}^r\frac{\partial}{\partial\theta_{k}}\cdot\frac{\partial\theta_{k}}{\partial\theta^{\prime}_j}
+\sum_{k=r+1}^d\frac{\partial}{\partial\theta_k}\cdot\delta_{k,j}\\
&=\frac{\partial}{\partial\theta_j}
-\sum_{k=1}^r\frac{\partial a_{k}}{\partial\theta_j}\cdot\frac{\partial}{\partial\theta_{k}}& &(j=r+1,\ldots,d)\\
\frac{\partial}{\partial\theta^{\prime}_j}&=\frac{\partial}{\partial\theta_j}& &(j=1,\ldots,r)
\end{alignat*}

(3) can be expressed as follows: Let \( m:=i_{r+1}+\cdots+i_d(\geq 1),\ l:=h_{r+1}+\cdots+h_{d}(\leq m) \). The derivatives are expressed as follows:
\begin{align}
  \frac{\partial^l}{\partial\theta_{r+1}^{\prime h_{r+1}}\cdots\partial\theta_{d}^{\prime h_{d}}}
  &=\prod_{j=r+1}^{d}
    \left(
    \frac{\partial}{\partial\theta_j}
    -\sum_{k=1}^r\frac{\partial a_{k}}{\partial\theta_j}\cdot\frac{\partial}{\partial\theta_{k}}
  \right)^{h_j}\notag\\
  &=\prod_{j=r+1}^{d}
    \left(
    \frac{\partial^{h_j}}{\partial\theta_j^{h_j}}
    -\sum_{k=1}^r\frac{\partial^{h_j} a_{k}}{\partial\theta^{h_j}_j}\cdot\frac{\partial}{\partial\theta_{k}}
    +\cdots
    \right)\label{eq:henkan02}\\
  &=\frac{\partial^l}{\partial\theta_{r+1}^{h_{r+1}}\cdots\partial\theta_{d}^{h_{d}}}
    -\sum_{k=1}^r
    \frac{\partial^l a_{k}}{\partial\theta_{r+1}^{h_{r+1}}\cdots\partial\theta_{d}^{h_{d}}}\cdot\frac{\partial}{\partial\theta_{k}}
    +\cdots\label{eq:henkan03}
\end{align}
and the ``\(\cdots\)'' in (\ref{eq:henkan02}) and (\ref{eq:henkan03}) represent terms derived from derivatives of \( a_{k}(\theta_{r+1},\ldots,\theta_{d}) \) up to order \( l-1 \) with respect to \( \theta_{r+1},\ldots,\theta_{d} \). 
Since \( a_{k}(\theta_{r+1},\ldots,\theta_{d}) \) is a homogeneous polynomial of degree \( m \) with respect to \( \theta_{r+1},\ldots,\theta_{d} \), the ``\(\cdots\)'' in (\ref{eq:henkan02}) and (\ref{eq:henkan03}) becomes zero at \( \theta=(\theta_1,\ldots,\theta_d)=0 \), and
\[
\left.\frac{\partial^l a_{k}}{\partial\theta_{r+1}^{h_{r+1}}\cdots\partial\theta_{d}^{h_{d}}}\right|_{\theta=0}=
\begin{cases}
  0 & (h_{r+1},\ldots,h_{d})\neq (i_{r+1},\ldots,i_{d})\\
  c_k & (h_{r+1},\ldots,h_{d})= (i_{r+1},\ldots,i_{d})
\end{cases}
\]
Therefore, the following holds:
\begin{align*}
  &\left.\frac{\partial^l}{\partial\theta_{r+1}^{\prime h_{r+1}}\cdots\partial\theta_{d}^{\prime h_{d}}}\right|_{(\theta^{\prime}_1,\ldots,\theta^{\prime}_d)=0}\\
  =&\left.\frac{\partial^l }{\partial\theta_{r+1}^{h_{r+1}}\cdots\partial\theta_{d}^{h_{d}}}\right|_{\theta=0}-
  \begin{cases}
    0 & (h_{r+1},\ldots,h_{d})\neq (i_{r+1},\ldots,i_{d})\\
    \sum_{k=1}^r c_k\cdot\left.\frac{\partial}{\partial\theta_{k}}\right|_{\theta=0} & (h_{r+1},\ldots,h_{d})= (i_{r+1},\ldots,i_{d})
  \end{cases}  
\end{align*}

\end{proof}

\begin{cor}\label{mainprop}[Coordinate transformation to satisfy Assumption~\ref{mainass}(2),(3)]
  Let $r$ be an integer satisfying $1 \leq r \leq d-1$, and let $n$ be an integer greater than or equal to 1. Define
  \begin{equation}
    D_n:=
    \left\{
      \left.\frac{
        \partial^{n} f(X|\theta)}{\partial\theta_{r+1}^{i_{r+1}}\cdots\partial\theta_{d}^{i_{d}}}\right|_{(\theta_1,\ldots,\theta_d)=0}
      \middle|~ 
      i_{r+1}+\cdots +i_d=n
    \right\}    
  \end{equation}
  Let $B$ be an element of $D_n$ defined as
  \[
  B:=
  \left.\frac{
      \partial^{n} f(X|\theta)}{\partial\theta_{r+1}^{i_{r+1}}\cdots\partial\theta_{d}^{i_{d}}}\right|_{(\theta_1,\ldots,\theta_d)=0}
  \]
  Assume that $B$ can be expressed using $B_1, \ldots, B_s \in D_n$ and real numbers $c_k, b_j~ (k=1, \ldots, r,~ j=1, \ldots, s)$ as follows:
  \[
    B
    =\sum_{k=1}^r c_k\cdot \left.\frac{\partial f(X|\theta)}{\partial\theta_{k}}\right|_{(\theta_1,\ldots,\theta_d)=0}
    +\sum_{j=1}^s b_j\cdot B_j
      ~~ \text{a.s.}
  \]
  Then, applying the coordinate transformation $\varphi:(\theta_1,\dots,\theta_d) \mapsto (\theta_1^{\prime},\ldots,\theta_d^{\prime})$ given by Proposition~\ref{lem:henkan};
  \begin{align*}
    \theta_k^{\prime}:=\theta_k+\frac{c_k}{i_{r+1}!\cdots i_d!}\theta_{r+1}^{i_{r+1}}\cdots\theta_d^{i_d}~~(k=1,\ldots,r),~~c_k\in\mathbb{R}
  \end{align*}  
  without affecting the other lower-order derivatives, the transformed $B$ (denoted as $B^{\prime}$) becomes
  \[
  B^{\prime}=\sum_{j=1}^s b_jB_j~~\text{a.s.}
  \]
\end{cor}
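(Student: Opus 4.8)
The plan is to obtain Corollary~\ref{mainprop} as an essentially immediate consequence of Proposition~\ref{lem:henkan}, the only real content being to keep track of which constants play which role. I would set $m := i_{r+1}+\cdots+i_d = n$ (this is the quantity called $m$ in Proposition~\ref{lem:henkan}) and take the constants in that proposition to be precisely the $c_1,\ldots,c_r$ occurring in the hypothesized linear relation for $B$; then the coordinate change $\varphi$ in the corollary is exactly the one furnished by Proposition~\ref{lem:henkan}, applied with the multi-index $(i_{r+1},\ldots,i_d)$.

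First I would record the invariance statements that justify the phrase ``without affecting the other lower-order derivatives''. By Proposition~\ref{lem:henkan}(2) we have $\partial/\partial\theta_k' = \partial/\partial\theta_k$ for $k=1,\ldots,r$, and since $\varphi$ is a bijection with $\varphi(0)=0$, this gives $A_k := \left.\partial f/\partial\theta_k\right|_{\theta=0} = \left.\partial f/\partial\theta_k'\right|_{\theta'=0}$ for each such $k$. Next, applying Proposition~\ref{lem:henkan}(3) to an arbitrary multi-index $(h_{r+1},\ldots,h_d)$ with $h_{r+1}+\cdots+h_d \le n$ and $(h_{r+1},\ldots,h_d)\neq(i_{r+1},\ldots,i_d)$, the correction term is $0$, so every pure $(\theta_{r+1},\ldots,\theta_d)$-derivative of $f$ of order at most $n$ other than the one with multi-index $(i_{r+1},\ldots,i_d)$ is left unchanged by $\varphi$. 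In particular each $B_j$, being such a pure derivative with multi-index distinct from that of $B$, satisfies $B_j' = B_j$ a.s.\ for $j=1,\ldots,s$.

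Then I would invoke Proposition~\ref{lem:henkan}(3) in the remaining case $(h_{r+1},\ldots,h_d) = (i_{r+1},\ldots,i_d)$, which reads $B' = B - \sum_{k=1}^r c_k A_k$ a.s., where $B'$ denotes the derivative defining $B$ computed in the new coordinates. Finally I would substitute the hypothesis $B = \sum_{k=1}^r c_k A_k + \sum_{j=1}^s b_j B_j$ a.s.\ — which is an identity of random variables, hence unaffected by the relabelling of coordinates — so that the $\sum_k c_k A_k$ terms cancel and we are left with $B' = \sum_{j=1}^s b_j B_j$ a.s. Since the $B_j$ are unchanged, this is the same as $\sum_j b_j B_j'$, so the conclusion holds in either reading.

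There is no analytic obstacle here; the one place that needs care is precisely this bookkeeping. One must use the same $c_k$ both as the coefficients of $A_k = \left.\partial f/\partial\theta_k\right|_{\theta=0}$ in the linear relation for $B$ and as the coefficients defining $\varphi$, so that the correction term produced by Proposition~\ref{lem:henkan}(3) cancels that part of the relation exactly; and one must note that, after the identifications above, the hypothesized linear dependence is a statement purely about the coordinate-independent random variables $A_k$ and $B_j$ together with $B$, so it may legitimately be fed into the transformed identity.
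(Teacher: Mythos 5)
Your proposal is correct and follows essentially the same route as the paper's proof: both reduce the statement to Proposition~\ref{lem:henkan}(2) for the invariance of the first-order derivatives in $\theta_1,\ldots,\theta_r$, and to the two cases of Proposition~\ref{lem:henkan}(3) — invariance for multi-indices other than $(i_{r+1},\ldots,i_d)$ (hence $B_j'=B_j$), and the correction $B'=B-\sum_{k=1}^r c_k A_k$ for the multi-index of $B$ itself, into which the hypothesized linear relation is substituted to cancel the $\sum_k c_k A_k$ term. No gap.
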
\leavevmode\par

\begin{proof}
  We need to demonstrate the following two points.
  (Below, the inverse transformation of $\varphi$ is denoted as
  $\phi:(\theta_1^{\prime},\ldots,\theta_d^{\prime})\mapsto(\theta_1,\dots,\theta_d)$.)
  \begin{itemize}
    \item [(1)] 
      For $k=1, \ldots, r$,
    \[
      \left.\frac{\partial f(X|\phi(\theta^{\prime}))}{\partial\theta^{\prime}_{k}}\right|_{(\theta^{\prime}_1, \ldots, \theta^{\prime}_d) = 0}
      = \left.\frac{\partial f(X|\theta)}{\partial\theta_{k}}\right|_{(\theta_1, \ldots, \theta_d) = 0}
    \]
    \item[(2)] 
      For any $(h_{r+1}, \ldots, h_d) \in \mathbb{Z}^{d-r}_{\geq 0}$ such that $h_{r+1} + \cdots + h_d \leq i_{r+1} + \cdots + i_d$,
      \begin{align*}
        &\left.
        \frac{\partial^{h_{r+1} + \cdots + h_d}f(X|\phi(\theta^{\prime}))}
          {\partial\theta_{r+1}^{\prime h_{r+1}} \cdots \partial\theta_d^{\prime h_d}}
        \right|_{(\theta_1^{\prime}, \ldots, \theta_d^{\prime}) = 0} \\
        =&
        \begin{cases}
          \left.
          \frac{\partial^{h_{r+1} + \cdots + h_d}f(X|\theta)}
            {\partial\theta_{r+1}^{h_{r+1}} \cdots \partial\theta_d^{h_d}}
          \right|_{(\theta_1, \ldots, \theta_d) = 0}
          &  \text{if } (h_{r+1}, \ldots, h_d) \neq (i_{r+1}, \ldots, i_d) \\
          \sum_{j=1}^s b_jB_j~ \text{a.s.} & \text{if } (h_{r+1}, \ldots, h_d) = (i_{r+1}, \ldots, i_d)
        \end{cases}        
      \end{align*}
  \end{itemize}  

  (1) can be directly applied from Proposition~\ref{lem:henkan}(2).

  For (2), when $(h_{r+1}, \ldots, h_d) = (i_{r+1}, \ldots, i_d)$, it follows from Proposition~\ref{lem:henkan}(3) that
  \begin{align*}
    B^{\prime}
    =& \left.\frac{\partial^n f(X|\phi(\theta^{\prime}))}{\partial\theta_{r+1}^{\prime i_{r+1}} \cdots \partial\theta_{d}^{\prime i_{d}}}\right|_{(\theta^{\prime}_1, \ldots, \theta^{\prime}_d) = 0} \\
    =& \left.\frac{\partial^n f(X|\theta)}{\partial\theta_{r+1}^{i_{r+1}} \cdots \partial\theta_{d}^{i_{d}}}\right|_{(\theta_1, \ldots, \theta_d) = 0}
      - \sum_{k=1}^r c_k \cdot
      \left.\frac{\partial f(X|\theta)}{\partial\theta_{k}}\right|_{(\theta_1, \ldots, \theta_d) = 0} \\
    =& \sum_{j=1}^s b_j \cdot B_j~~ \text{a.s.}
  \end{align*}
  Additionally, when $(h_{r+1}, \ldots, h_d) \neq (i_{r+1}, \ldots, i_d)$ and $h_{r+1} + \cdots + h_d \leq n$, it follows from Proposition~\ref{lem:henkan}(3) that
  \[
    \left.\frac{\partial^{h_{r+1} + \cdots + h_d} f(X|\phi(\theta^{\prime}))}{\partial\theta_{r+1}^{\prime h_{r+1}} \cdots \partial\theta_{d}^{\prime h_d}}\right|_{(\theta^{\prime}_1, \ldots, \theta^{\prime}_d) = 0}
    = \left.\frac{\partial^{h_{r+1} + \cdots + h_d} f(X|\theta)}{\partial\theta_{r+1}^{h_{r+1}} \cdots \partial\theta_{d}^{h_d}}\right|_{(\theta_1, \ldots, \theta_d) = 0}
  \]
  holds true.
  \\
\end{proof}

\begin{rem}
  Let us consider the meaning of Corollary~\ref{mainprop}.
  For simplicity, assume \( s = 0 \) and denote the random variables \( A_k(X|\theta) \) as \( A_k \).
  According to the assumption of Corollary~\ref{mainprop}, using the real numbers \( c_k\ (k=1,\ldots,r) \), we can express
  \[
    \left.\frac{\partial^{i_{r+1}+\cdots+i_d} f(X|\theta)}{\partial\theta_{r+1}^{i_{r+1}}\cdots\partial\theta_{d}^{i_{d}}}\right|_{(\theta_1,\ldots,\theta_d)=0}
    =\sum_{k=1}^r c_k A_k~~ \text{a.s.}
  \]
  Then, according to Corollary~\ref{mainprop}(2), after the variable transformation, we have
  \[
    \left.\frac{\partial^{i_{r+1}+\cdots+i_d} f(X|\phi(\theta^{\prime}))}{\partial\theta_{r+1}^{\prime i_{r+1}}\cdots\partial\theta_{d}^{\prime i_{d}}}\right|_{(\theta^{\prime}_1, \ldots, \theta^{\prime}_d) = 0}=0~~ \text{a.s.}
  \]
  Let's interpret this from a different perspective.

  When we perform a Taylor expansion of the log-likelihood ratio function \( f \) with respect to the parameter \( \theta \) (noting that the coefficients are random variables), and group the terms by linearly independent random variables, we get:
  \begin{align*}
    f &= A_1\theta_1 + \cdots + A_r\theta_r + \frac{\sum_{k=1}^r c_k A_k}{i_{r+1}!\cdots i_d!}\theta_{r+1}^{i_{r+1}}\cdots\theta_d^{i_d} + (\text{higher order terms})\ \ \text{a.s.}\\
    &= \sum_{k=1}^r A_k \left( \theta_k + \frac{c_k}{i_{r+1}!\cdots i_d!}\theta_{r+1}^{i_{r+1}}\cdots\theta_d^{i_d} \right) + (\text{higher order terms})\ \ \text{a.s.}
  \end{align*}
  Here, if we apply the variable transformation given in Corollary~\ref{mainprop}
  \begin{align*}
    \theta_k^{\prime} := \theta_k + \frac{c_k}{i_{r+1}!\cdots i_d!}\theta_{r+1}^{i_{r+1}}\cdots\theta_d^{i_d}~~ (k=1,\ldots,r)
  \end{align*}
  then we can rewrite it as
  \begin{align*}
    f = A_1\theta_1^{\prime} + \cdots + A_r\theta_r^{\prime} + (\text{higher order terms})~~ \text{a.s.}
  \end{align*}
  This shows that the coefficient (random variable) of the term involving \(\theta_{r+1}^{i_{r+1}}\cdots\theta_d^{i_d}\) can indeed be made zero (a.s.).
\end{rem}

\newpage
%=-------------------------------Application Example
%=-------------------------------Specific Example
\section{Example on the Derivation of the Learning Coefficient}
Using the Main Theorem~\ref{mainthm1} for semi-regular models, we calculate the learning coefficient for a specific model.

\subsection{The Case of Two Parameters}
Here, we fix an arbitrary realizable parameter \(\theta_*\) at a single point and translate it to the origin \(O\), and consider the real log canonical threshold at the origin.
We assume that the statistical model \( p(x|\theta) \) at the origin satisfies the semi-regular condition (i.e., the rank \( r > 0 \) of the Fisher information matrix),
and denote the real log canonical threshold at the origin by \(\lambda_O\).
Note that the learning coefficient is the minimum value of the real log canonical thresholds calculated for each realizable parameter.

By performing variable transformations, semi-regular models with two parameters can be classified into one of the categories shown in Table \ref{fig_2parameters}.
\begin{table}[htbp]
  \centering
  \caption{Semi-Regular Models with Two Parameters.}
  \label{fig_2parameters}
  \begin{tabular}{cccc}
  Property of \( f \) at \( \theta = 0 \) & 
  \begin{tabular}{c}  
     \(\lambda_O\) \\
     (Multiplicity) 
  \end{tabular}
  & Ideal & 
   \begin{tabular}{c}  
     Geometry of \\
     \( \Theta_* \) near \( \theta = 0 \)
   \end{tabular}
   \\ \hline
 \(\frac{\partial f}{\partial\theta_1},\frac{\partial f}{\partial\theta_2}\): lin. ind.& 1 (1) & \((\theta_1, \theta_2)\) & pt \(\{(\theta_1, \theta_2) = (0, 0)\}\) \\ \hline 
   \begin{tabular}{c}
    \(\frac{\partial f}{\partial\theta_1}, \frac{\partial^m f}{\partial\theta^m_2}\): lin. ind.\\
    \(\frac{\partial f}{\partial\theta_2} = \cdots = \frac{\partial^{m-1} f}{\partial\theta^{m-1}_2} = 0\)
   \end{tabular}
  & \(\frac{m+1}{2m}\) (1) & \((\theta_1, \theta_2^m)\) & pt \(\{(\theta_1, \theta_2) = (0, 0)\}\) \\ \hline
   \begin{tabular}{c}
    \(\frac{\partial f}{\partial\theta_1}\): lin. ind.\\
    \(\forall m, \frac{\partial^m f}{\partial\theta^m_2} = 0\)
   \end{tabular}
    & \(\frac{1}{2}\) (1) & \((\theta_1)\) & line \(\{\theta_1 = 0\}\) \\
\end{tabular}
\end{table}

The above results for the real log canonical threshold \(\lambda_O\) are all consequences of Main Theorem~\ref{mainthm1} (The third case in Table \ref{fig_2parameters} corresponds to the case where \((d,r,m)=(1,1,1)\)).
From this, we find that for a statistical model with two parameters, where the Fisher information matrix at all realizable parameters has a non-zero rank, the learning coefficient is given by:
\[
\lambda = \frac{m+1}{2m}\ ,\ m = 1, 2, \ldots, \infty
\]
(multiplicity is 1). Notably, the minimum value of the learning coefficient is 1/2 and the maximum value is 1, parameterized by \( m \in \mathbb{Z}_{\geq 1} \cup \{\infty\} \). Moreover, \( \Theta_* \) does not contain singularities.

\begin{rem}
  The real log canonical threshold at the origin for the statistical model considered in Example~\ref{ex:m=2} was 3/4.
  Applying the general theory to this statistical model with \( (d, r, m) = (2, 1, 2) \), we immediately obtain:
  \[
  \lambda_O = \frac{m+1}{2m} = \frac{3}{4}
  \]
\end{rem}

The same discussion can be applied to the cases where \( r = d \) or \( r = d - 1 \). We summarize this in the following proposition.

\begin{prop}[Case of $r \geq d-1$]\label{prop:deg2}
  For a statistical model with \( d \) parameters, where the rank \( r \) of the Fisher information matrix at all realizable parameters is \( r = d-1 \) or \( r = d \),
  the learning coefficient \(\lambda\) can be expressed using a positive integer \( m \) as follows (multiplicity is 1):
  \[
  \lambda = \frac{1 + (d-1)m}{2m}\ ,\ m = 1, 2, \ldots, \infty
  \]
  Here, \( m = \infty \) represents \(\lambda = (d-1)/2\). Notably, the minimum value of the learning coefficient is \( (d-1)/2 \) and the maximum value is \( d/2 \). Furthermore, \( \Theta_* \) does not contain singularities.
\end{prop}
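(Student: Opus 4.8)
The plan is to reduce the statement to the local real log canonical thresholds $\lambda_{\theta_*}$ attached to the realizable parameters and to invoke $\lambda=\inf_{\theta_*\in\Theta_*}\lambda_{\theta_*}$. Fix a realizable parameter, translate it to the origin $O$, and note that its Fisher information has rank $r\in\{d-1,d\}$. I will show that $\lambda_{\theta_*}\in\Lambda:=\{\frac{d-1}{2}+\frac{1}{2m}:m\in\mathbb{Z}_{\geq1}\}\cup\{\frac{d-1}{2}\}$ with multiplicity $1$ in every case, and that near $\theta_*$ the set $\Theta_*$ is either the single point $O$ or a smooth curve. Since $\Lambda$ is compact, $\lambda=\inf_{\theta_*}\lambda_{\theta_*}$ then also lies in $\Lambda$, which is exactly the asserted formula for a well-defined $m\in\mathbb{Z}_{\geq1}\cup\{\infty\}$; its minimum is $(d-1)/2$, its maximum $d/2$, and the multiplicity of the learning coefficient is the maximum of the multiplicities of the minimizing points, all of which equal $1$. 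Smoothness of $\Theta_*$ follows because at every point it is locally a point or a smooth curve, so there is no self-crossing.

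For $r=d$ the model is regular at $O$: by Remark~\ref{remIJ} the Hessian $J$ of $K$ at $O$ equals the Fisher information $I$, which is positive definite, so $O$ is an isolated zero of $K$ and the Morse lemma gives $\lambda_O=d/2$ with multiplicity $1$ (the value $m=1$). For $r=d-1$ there is a single ``extra'' coordinate, and I would use the analytic splitting lemma applied directly to $K$: since $K$ is analytic with a critical point at $O$ whose Hessian has rank $r=d-1$, there is a local analytic change of coordinates $(\theta_1,\ldots,\theta_d)\mapsto(\eta_1,\ldots,\eta_{d-1},u)$ with unit Jacobian at $O$ such that $K=\eta_1^2+\cdots+\eta_{d-1}^2+h(u)$, where $h$ is analytic with $h(0)=h'(0)=h''(0)=0$. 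Because $K\geq0$ everywhere, $h(u)=K(0,\ldots,0,u)\geq0$, so either $h\equiv0$ or $h(u)=c\,u^{2m}\bigl(1+O(u)\bigr)$ for some $c>0$ and some integer $m\geq2$. Standard real log canonical threshold computations for the normal forms $\eta_1^2+\cdots+\eta_{d-1}^2$ and $\eta_1^2+\cdots+\eta_{d-1}^2+u^{2m}$ then give $\lambda_O=\frac{d-1}{2}$ (multiplicity $1$, with $\Theta_*$ locally the smooth curve $\{\eta_1=\cdots=\eta_{d-1}=0\}$) in the first case, and $\lambda_O=\frac{d-1}{2}+\frac{1}{2m}$ (multiplicity $1$, with $\Theta_*$ locally $\{O\}$) in the second.

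To keep the exposition inside the paper's framework one can argue equivalently via Main Theorem~\ref{mainthm1}: the construction of Section~\ref{sec:chg} puts $f$ in coordinates satisfying Assumption~\ref{mainass}(1)(2); since only the single coordinate $\theta_d$ is ``singular'', $W_m$ is spanned by the one random variable $\left.\partial^m f/\partial\theta_d^m\right|_O$, so reaching step (iv) of the construction forces $\dim(V_1+W_m)=r+1=d$, hence $\left.\partial^m f/\partial\theta_d^m\right|_O\notin V_1$; as $F_m(X|\theta_d)$ is, for every $\theta_d\neq0$, a nonzero scalar multiple of this variable, Assumption~\ref{mainass}(3)(ii) holds for all $\theta_d\neq0$, and Main Theorem~\ref{mainthm1} gives $\lambda_O=\frac{d-r+rm}{2m}=\frac{1+(d-1)m}{2m}$ with multiplicity $1$ and $\Theta_*$ locally $\{O\}$; the case where the construction never terminates corresponds to $h\equiv0$ above and yields the ``$m=\infty$'' value $\frac{d-1}{2}$.

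The main obstacle is the degenerate ($m=\infty$) situation, which is not covered directly by Main Theorem~\ref{mainthm1} (only as the limit $m\to\infty$): one must justify the analytic splitting lemma in the needed form, check that nonnegativity of $K$ forces the transverse direction $h$ either to vanish identically or to have even vanishing order $\geq4$, and verify the two normal-form real log canonical thresholds (including that the multiplicity is $1$ and not larger). Everything else — the reduction to local thresholds, the regular case $r=d$, and the compactness argument transferring the form of $\lambda_{\theta_*}$ to $\lambda$ — is routine given Remark~\ref{remIJ}, the Morse lemma, and the Main Theorems.
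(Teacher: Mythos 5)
Your argument is correct, but your primary route is genuinely different from the paper's. The paper proves Proposition~\ref{prop:deg2} by reducing to the classification in Table~\ref{fig_2parameters}: the construction of Section~\ref{sec:chg} brings $f$ into coordinates satisfying Assumption~\ref{mainass}, and since $d-r\leq 1$ the set $D_m$ contains at most one element, so Assumption~\ref{mainass}(3)(ii) holds automatically once step (iv) of that construction is reached and Main Theorem~\ref{mainthm1} gives $\lambda_{\theta_*}=\frac{1+(d-1)m}{2m}$; the degenerate row of the table ($m=\infty$) is treated as a $(d-1)$-parameter regular model. This is exactly your second, ``in-framework'' argument. Your first argument instead applies the analytic splitting lemma to $K$ itself, using $\mathrm{rank}\,J=\mathrm{rank}\,I=d-1$ (Remark~\ref{remIJ}) to write $K=\eta_1^2+\cdots+\eta_{d-1}^2+h(u)$ and then exploiting $K\geq 0$ to force $h\equiv 0$ or $h(u)=c\,u^{2m}\left(1+O(u)\right)$ with $c>0$, $m\geq 2$. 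What this buys is a clean, one-step treatment of the $m=\infty$ case: the paper's route there implicitly requires composing the coordinate transformations of Section~\ref{sec:chg} for every order $n$, and the convergence of that infinite composition to an analytic change of variables is never addressed, whereas $h\equiv 0$ is an honest dichotomy for a single analytic function of one variable. What it costs is that you import two external facts --- the splitting lemma in the analytic category and the values $\frac{d-1}{2}+\frac{1}{2m}$ with multiplicity $1$ for the normal forms --- though the latter are recoverable from Main Theorem~\ref{mainthm1} itself (see the Remark on the ideal $(\theta_1,\ldots,\theta_r,\theta_{r+1}^m+\cdots+\theta_d^m)$). Two small points to tidy up: $h(u)$ is $K$ evaluated along the fibrewise critical section $\eta=\psi(u)$, not along $\theta_1=\cdots=\theta_{d-1}=0$ in the original coordinates (nonnegativity of $h$ still follows because $K\geq 0$ everywhere); and when the infimum over $\Theta_*$ is approached but not attained (values $\frac{d-1}{2}+\frac{1}{2m_n}$ with $m_n\to\infty$), the multiplicity claim requires a minimizing point to exist, a compactness/semicontinuity remark that the paper also omits.
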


\begin{rem}
In the case of non-semi-regular models, there are exceptions to the above. For example, at \((\theta_1, \theta_2) = 0\), if the random variables
\[
\left.\frac{\partial f}{\partial \theta_1^2}\right|_{(\theta_1, \theta_2) = 0},\ \left.\frac{\partial f}{\partial \theta_2^3}\right|_{(\theta_1, \theta_2) = 0}
\]
are linearly independent and all derivatives of \( f \) up to those orders are zero (e.g., \( K(\theta) = \theta_1^4 + \theta_2^6 \)),
then this case applies.
(Although methods for resolving such singularities similar to Euclid's algorithm are known, they are beyond the scope of this paper.)

However, if the model is semi-regular at other realizable parameters, the above results can be applied at those points to obtain an upper bound on the learning coefficient.
\end{rem}

\subsection{Formula for the Learning Coefficient in a Mixture Distribution Model with a Constant Mixing Ratio}
Generalizing from Example~\ref{ex:m=2}, we establish the following.

\begin{ex}\label{ex_tuika}
  Let \(M (\geq 2)\) be a constant, and let \(\tilde{p}(x|\theta)\) be a binomial distribution Bin\((M, \theta)\). Thus,
\[
\tilde{p}(X=x|\theta) = \binom{M}{x} \theta^x (1-\theta)^{M-x} \quad (x=0,1,\ldots,M)
\]

  Let $H(\geq 2)$ be the number of mixture components, and $(T_1,\ldots,T_{H-1})$ be constants satisfying $0 < T_i < 1$ and $\sum_{i=1}^{H-1} T_i \neq 1$. We consider a mixture distribution model with $H$ parameters $(\theta_1,\ldots,\theta_H)$ defined as
  \[
  p(x|\theta) := T_1\tilde{p}(x|\theta_1) + \cdots + T_{H-1}\tilde{p}(x|\theta_{H-1}) + \left(1-\sum_{i=1}^{H-1}T_i\right)\tilde{p}(x|\theta_H).
  \]
  Assuming the true distribution $q(x) = \tilde{p}(x|\theta_*)$(where \(0<\theta_* <1\) is a constant), the learning coefficient is given by
  \[
  \lambda = \frac{H+1}{4}.
  \]
\end{ex}

\begin{proof}
  From Lemma~\ref{tuika_lem1}, the realizable parameter set $\Theta_*$ consists only of the point $(\theta_1,\ldots,\theta_H)=(\theta_*,\ldots,\theta_*)$.
  We redefine the statistical model by translating the origin such that
  \[
  p(x|\theta) := T_1\tilde{p}(x|\theta_1+\theta_*) + \cdots + T_{H-1}\tilde{p}(x|\theta_{H-1}+\theta_*) + \left(1-\sum_{i=1}^{H-1}T_i\right)\tilde{p}(x|\theta_H+\theta_*).
  \]
  The learning coefficient we aim to determine is the real log canonical threshold at the origin $(\theta_1,\ldots,\theta_H)=0$.
  \begin{align*}
    \left.\frac{\partial f}{\partial\theta_i}\right|_{(\theta_1,\ldots,\theta_H)=0} &= -\frac{T_i}{q(x)}\frac{\partial\tilde{p}}{\partial\theta}(x|\theta_*) \quad (i=1,\ldots,H-1),\\
    \left.\frac{\partial f}{\partial\theta_H}\right|_{(\theta_1,\ldots,\theta_H)=0} &= -\frac{1-\sum_{i=1}^{H-1} T_i}{q(x)}\frac{\partial\tilde{p}}{\partial\theta}(x|\theta_*)
  \end{align*}
  are linearly dependent, satisfying
  \[
  \left.\frac{\partial f}{\partial\theta_i}\right|_{(\theta_1,\ldots,\theta_H)=0} = \frac{T_i}{1-\sum_{i=1}^{H-1} T_i}\left.\frac{\partial f}{\partial\theta_H}\right|_{(\theta_1,\ldots,\theta_H)=0} \quad (i=1,\ldots,H-1).
  \]
  Transforming the coordinates via
  \[
  \theta_H^{\prime} := \theta_H + \frac{\sum_{i=1}^{H-1} T_i\theta_i}{1-\sum_i T_i},
  \]
  leads to
  \begin{align}
    \left.\frac{\partial f}{\partial\theta_i}\right|_{(\theta_1,\ldots,\theta_{H-1},\theta_H^{\prime})=0} &= 0 \quad (i=1,\ldots,H-1) \notag\\
    \left.\frac{\partial f}{\partial\theta_H}\right|_{(\theta_1,\ldots,\theta_{H-1},\theta_H^{\prime})=0} &= -\frac{1-\sum_{i=1}^{H-1} T_i}{q(x)}\frac{\partial\tilde{p}}{\partial\theta}(x|\theta_*). \label{tuika_eq03}
  \end{align}
  The second derivative $F_2(x|\theta_1,\ldots,\theta_{H-1})$ is represented as
  \[
  F_2(x|\theta_1,\ldots,\theta_{H-1}) = \frac{-1}{1-\sum_i T_i}\cdot\frac{1}{2q(x)}\cdot\frac{\partial^2\tilde{p}}{\partial\theta^2}(x|\theta_*)\cdot(\theta_1,\ldots,\theta_{H-1})\Sigma(\theta_1,\ldots,\theta_{H-1})^{\top}.
  \]
  The symmetric matrix $\Sigma := (\sigma_{i, j})_{1\leq i,j\leq H-1} \in \mathbb{R}^{(H-1)\times(H-1)}$ is defined as
  \[
  \sigma_{i.j} :=
  \begin{cases}
    T_i(1-\sum_{k=1}^{H-1}T_k+T_i) & \text{if } i = j\\
    T_iT_j & \text{if } i \neq j
  \end{cases}
  \]
  Clearly,
  \begin{equation}\label{tuika_eq02}
    \frac{\partial\tilde{p}}{\partial\theta}(x|\theta_*),~
    \frac{\partial^2\tilde{p}}{\partial\theta^2}(x|\theta_*): \text{are linearly independent}
  \end{equation}
  and from Lemma~\ref{tuika_lem2}, since $\Sigma$ is a positive-definite matrix,
  \[
  \forall (\theta_1,\ldots,\theta_{H-1}) \neq 0,~~(\theta_1,\ldots,\theta_{H-1})\Sigma(\theta_1,\ldots,\theta_{H-1})^{\top} \neq 0
  \]
  and $F_2(x|\theta_1,\ldots,\theta_{H-1})$ and (\ref{tuika_eq03}) are linearly independent.
  Thus, this statistical model satisfies Assumption~\ref{mainass}(3)(ii) when $(d,r,m) = (H,1,2)$, allowing the direct application of Main Theorem~\ref{mainthm1} to compute the real log canonical threshold,
  \[
  \lambda = \frac{d-r+rm}{2m} = \frac{H+1}{4}
  \]
  (multiplicity is 1).
\end{proof}

\begin{lem}\label{tuika_lem1}
  Assume the same conditions as in Example~\ref{ex_tuika}.
  In this case, the realizable parameter set $\Theta_*$ consists solely of the point $(\theta_1,\ldots,\theta_H) = (\theta_*,\ldots,\theta_*)$.
\end{lem}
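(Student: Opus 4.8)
The plan is to show that a parameter is realizable exactly when the first two factorial moments of the binomial mixture match those of $q$, and then to extract from a sum-of-squares identity that every component parameter must equal $\theta_*$. The only role of the hypothesis $M\ge 2$ is to make the second factorial moment available.

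First I would note that $p(\cdot\mid\theta)$ and $q$ are all supported on $\{0,1,\ldots,M\}$, so $(\theta_1,\ldots,\theta_H)\in\Theta_*$ if and only if $p(x\mid\theta)=q(x)$ for every $x\in\{0,\ldots,M\}$. Then, for each integer $k\ge 0$, I would apply the linear functional $g\mapsto\sum_{x=0}^{M}x^{\underline{k}}g(x)$, where $x^{\underline{k}}:=x(x-1)\cdots(x-k+1)$, to both sides of this identity. Using the classical factorial-moment formula for the binomial, namely $\sum_{x}x^{\underline{k}}\tilde p(x\mid\theta)=M^{\underline{k}}\theta^{k}$ with $M^{\underline{k}}:=M(M-1)\cdots(M-k+1)$ (which follows from $x^{\underline{k}}\binom{M}{x}=M^{\underline{k}}\binom{M-k}{x-k}$ and the binomial theorem), together with linearity of the mixture, I obtain, writing $T_H:=1-\sum_{i=1}^{H-1}T_i$,
\[
M^{\underline{k}}\sum_{i=1}^{H}T_i\,\theta_i^{\,k}=M^{\underline{k}}\,\theta_*^{\,k}\qquad(k=0,1,2,\ldots).
\]

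Since $M\ge 2$, the coefficients $M^{\underline{1}}$ and $M^{\underline{2}}$ are nonzero, so dividing through gives $\sum_{i=1}^{H}T_i\theta_i=\theta_*$ and $\sum_{i=1}^{H}T_i\theta_i^2=\theta_*^2$; combining these with $\sum_{i=1}^{H}T_i=1$ yields the identity
\[
\sum_{i=1}^{H}T_i(\theta_i-\theta_*)^2=\sum_{i=1}^{H}T_i\theta_i^2-2\theta_*\sum_{i=1}^{H}T_i\theta_i+\theta_*^2\sum_{i=1}^{H}T_i=0.
\]
Because every mixing weight $T_1,\ldots,T_{H-1},T_H$ is strictly positive, each summand is nonnegative, hence all vanish and $\theta_i=\theta_*$ for every $i$. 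The reverse inclusion is immediate, since $p(x\mid\theta_*,\ldots,\theta_*)=\big(\sum_{i=1}^{H}T_i\big)\tilde p(x\mid\theta_*)=q(x)$. This establishes $\Theta_*=\{(\theta_*,\ldots,\theta_*)\}$.

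There is no real obstacle here; the argument is essentially bookkeeping. The two points worth care are that $M\ge 2$ is precisely what is needed to recover the second factorial moment (for $M=1$ only the first moment is available and identifiability genuinely fails), and that strict positivity of all the weights, including $T_H$, is what forces each squared term to vanish in the last step. Matching only finitely many moments is legitimate here because the distributions are supported on the finite set $\{0,\ldots,M\}$.
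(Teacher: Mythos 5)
Your proof is correct and follows essentially the same route as the paper: test the realizability identity against low-order moments to get $\sum_i T_i\theta_i=\theta_*$ and $\sum_i T_i\theta_i^2=\theta_*^2$, then conclude from the equality case of a convexity/sum-of-squares argument that all $\theta_i=\theta_*$. Your use of factorial moments $x^{\underline{k}}$ is a slightly cleaner bookkeeping device than the paper's multiplication by $x^2$ (which silently relies on the linear-in-$\theta$ variance terms cancelling via the first-moment identity), and your explicit identity $\sum_i T_i(\theta_i-\theta_*)^2=0$ is just the unpacked form of the paper's appeal to the equality condition in Jensen's inequality.
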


\begin{proof}
  Given
  \begin{equation}\label{eq_tuika01}
  T_1\tilde{p}(x|\theta_1) + \cdots + T_{H-1}\tilde{p}(x|\theta_{H-1}) + \left(1 - \sum_{i=1}^{H-1}T_i\right)\tilde{p}(x|\theta_H) = \tilde{p}(x|\theta_*)
  \end{equation}
  Multiplying both sides of (\ref{eq_tuika01}) by $x$ and summing for $x = 0, 1, \ldots,M$ yields,
  \[
  T_1\theta_1 + \cdots + T_{H-1}\theta_{H-1} + \left(1 - \sum_{i=1}^{H-1}T_i\right)\theta_H = \theta_*
  \]
  Further, multiplying (\ref{eq_tuika01}) by $x^2$ and summing for $x = 0, 1, \ldots,M$ results in,
  \[
  T_1\theta_1^2 + \cdots + T_{H-1}\theta_{H-1}^2 + \left(1 - \sum_{i=1}^{H-1}T_i\right)\theta_H^2 = \theta_*^2
  \]
  Thus,
  \begin{align*}
  &T_1\theta_1^2 + \cdots + T_{H-1}\theta_{H-1}^2 + \left(1 - \sum_{i=1}^{H-1}T_i\right)\theta_H^2 \\
  = &\left(T_1\theta_1 + \cdots + T_{H-1}\theta_{H-1} + \left(1 - \sum_{i=1}^{H-1}T_i\right)\theta_H\right)^2
  \end{align*}
  and, since the equality condition of Jensen's inequality holds, it follows that $\theta_1 = \cdots = \theta_{H-1} = \theta_H=\theta_*$.
\end{proof}

\begin{lem}\label{tuika_lem2}
  Let $N$ be an integer greater than or equal to 1, and variables $T_1,\ldots,T_N (0 \leq T_i \leq 1, \sum_{i=1}^N T_i \leq 1)$. Define the symmetric matrix $\Sigma_N := (\sigma_{i,j})_{1\leq i,j \leq N} \in \mathbb{R}^{N\times N}$ as
  \[
  \sigma_{i.j} :=
  \begin{cases}
    T_i(1-\sum_{k=1}^{N}T_k+T_i) & i = j\\
    T_iT_j & i \neq j
  \end{cases}
  \]
  Then, $\Sigma_N$ is non-negative definite, and the following two conditions are equivalent:
  \begin{itemize}
    \item [(i)] $\forall i=1,\ldots,N, ~ 0<T_i<1,~ \sum_{i=1}^N T_i \neq 1$
    \item [(ii)] $\Sigma_N$ is positive-definite
  \end{itemize}
\end{lem}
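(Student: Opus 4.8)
The plan is to expose the structure of $\Sigma_N$ by introducing the leftover weight $T_0 := 1 - \sum_{k=1}^N T_k \ge 0$ and writing $\mathbf{t} := (T_1,\ldots,T_N)^\top$. Since the diagonal entries are $T_i(T_0 + T_i) = T_0 T_i + T_i^2$ and the off-diagonal entries are $T_iT_j$, one gets the identity $\Sigma_N = T_0\,\mathrm{diag}(T_1,\ldots,T_N) + \mathbf{t}\mathbf{t}^\top$, hence for every $\mathbf{u} = (u_1,\ldots,u_N)^\top \in \mathbb{R}^N$
\[
\mathbf{u}^\top \Sigma_N \mathbf{u} = T_0\sum_{i=1}^N T_i u_i^2 + \Bigl(\sum_{i=1}^N T_i u_i\Bigr)^2 .
\]
Both terms are non-negative because $T_0 \ge 0$ and $T_i \ge 0$, which gives non-negative definiteness at once; the remainder of the proof is just analysing when the right-hand side can vanish. (Equivalently, one could exhibit random variables $A_i = \sqrt{T_0T_i}\,\xi_i + T_i\eta$ with i.i.d.\ standard normal $\xi_1,\ldots,\xi_N,\eta$, for which $\Sigma_N = \mathbb{E}[A A^\top]$, and then quote Lemma~\ref{mainlem}; but the direct computation above is self-contained.)

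For (i) $\Rightarrow$ (ii): under (i) every $T_i$ is strictly positive, and $\sum_k T_k \le 1$ together with $\sum_k T_k \ne 1$ forces $\sum_k T_k < 1$, i.e.\ $T_0 > 0$. Then $\mathbf{u}^\top\Sigma_N\mathbf{u} = 0$ makes $T_0\sum_i T_i u_i^2 = 0$, and since each coefficient $T_0 T_i$ is strictly positive this forces $\mathbf{u} = 0$; so $\Sigma_N$ is positive definite.

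For (ii) $\Rightarrow$ (i) I would argue by contraposition, enumerating the ways (i) can fail given $0 \le T_i \le 1$ and $\sum_k T_k \le 1$. If $T_i = 0$ for some $i$, then $\sigma_{i,i} = T_i(T_0+T_i) = 0$, so $\mathbf{e}_i^\top\Sigma_N\mathbf{e}_i = 0$ and $\Sigma_N$ is not positive definite. If $T_i = 1$ for some $i$, then $\sum_k T_k \le 1$ forces $T_j = 0$ for every $j \ne i$, reducing (when $N \ge 2$) to the previous case. If $\sum_k T_k = 1$, i.e.\ $T_0 = 0$, then $\mathbf{u}^\top\Sigma_N\mathbf{u} = (\sum_i T_i u_i)^2$ vanishes on a hyperplane, which is nontrivial once $N \ge 2$, so again $\Sigma_N$ is not positive definite. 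The base case $N = 1$ ($\Sigma_1 = (T_1)$) is handled directly.

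The only delicate part is this last direction: one must check that $T_0 = 0$ really does destroy positive definiteness, which relies on $\mathbf{t}\mathbf{t}^\top$ having rank $1 < N$ (hence the separate treatment of $N = 1$), and one must verify that the failure modes ``$T_i \notin (0,1)$'' and ``$\sum_k T_k = 1$'' are exhaustive using only the standing constraints. Everything else — non-negative definiteness and the implication (i) $\Rightarrow$ (ii) — is a two-line consequence of the decomposition $\Sigma_N = T_0\,\mathrm{diag}(T_1,\ldots,T_N) + \mathbf{t}\mathbf{t}^\top$, so the real content of the lemma is spotting this decomposition.
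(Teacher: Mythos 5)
Your proof is correct in substance but takes a genuinely different route from the paper. The paper first computes $\det \Sigma_N$ and observes that, granted non-negative definiteness, positive definiteness is equivalent to the determinant being nonzero; it then proves non-negative definiteness by induction on $N$, completing the square in $s_N$ to peel off one variable and reduce to $\Sigma_{N-1}$. You instead spot the global decomposition $\Sigma_N = T_0\,\mathrm{diag}(T_1,\ldots,T_N) + \mathbf{t}\mathbf{t}^\top$ with $T_0 = 1-\sum_k T_k$, which is essentially the paper's induction carried out in one step: it gives the closed form $\mathbf{u}^\top\Sigma_N\mathbf{u} = T_0\sum_i T_iu_i^2 + \bigl(\sum_i T_iu_i\bigr)^2$, from which non-negative definiteness and the implication (i) $\Rightarrow$ (ii) are immediate, and the failure modes for (ii) $\Rightarrow$ (i) can be read off directly rather than through the determinant. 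Your approach buys a shorter, non-inductive argument that avoids the paper's implicit division by $1-\sum_{i=1}^{N-1}T_i$ in the completed square (which degenerates when that quantity vanishes), and via the matrix determinant lemma it also yields $\det\Sigma_N = \bigl(1-\sum_i T_i\bigr)^{N-1}\prod_i T_i$, whereas the paper's stated formula $\bigl(1-\sum_i T_i\bigr)\prod_i T_i$ only agrees with this for $N=2$ (the zero sets coincide for $N\ge 2$, so the paper's equivalence argument survives). The alternative Gaussian construction you mention is exactly in the spirit of the paper's Lemma~\ref{mainlem}, so either justification fits the surrounding text.

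One point you flag but do not resolve: for $N=1$ with $T_1=1$ one has $\Sigma_1=(T_1)=(1)$, which is positive definite while (i) fails, so the stated equivalence (ii) $\Rightarrow$ (i) is actually false in that corner case; saying the base case is ``handled directly'' papers over this. This is a defect of the lemma as stated (the paper's determinant formula also fails there), not of your decomposition, and it is harmless for the application in Example~\ref{ex_tuika}, which only uses (i) $\Rightarrow$ (ii). If you want a clean statement, either assume $N\ge 2$ in direction (ii) $\Rightarrow$ (i) or note the exception explicitly.
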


\begin{proof}
  A simple calculation gives
  \[
  \text{det }\Sigma_N = \left(1-\sum_{i=1}^N T_i\right)\prod_{i=1}^N T_i,
  \]
  hence, if we accept that $\Sigma_N$ is non-negative definite, the equivalence of (i) and (ii) is evident.
  Therefore, it suffices to demonstrate only non-negative definiteness, which we prove by induction on $N$.

  When $N=1$, it is clearly true.
  Assuming it holds up to $N-1$, consider $\tilde{\Sigma}_N \in \mathbb{R}^{(N-1)\times(N-1)}$, which is the matrix $\Sigma_N$ with the $N$-th row and column removed.
  For $\boldsymbol{S}_N := (s_1, \ldots, s_N)^{\top} \in \mathbb{R}^N$, completing the square for the quadratic form in terms of $s_N$, we obtain
  \begin{align*}
    &\boldsymbol{S}_N^{\top}\Sigma_N\boldsymbol{S}_N\\
    =& T_N\left(1-\sum_{i=1}^{N-1} T_i\right)s_N^2
    + 2\sum_{i=1}^{N-1} T_iT_N s_i s_N 
    + \boldsymbol{S}_{N-1}^{\top}\tilde{\Sigma}_N\boldsymbol{S}_{N-1}\\
    =& T_N\left(1-\sum_{i=1}^{N-1} T_i\right)\left(s_N + \frac{\sum_{i=1}^{N-1} T_is_i}{1-\sum_{i=1}^{N-1} T_i}\right)^2
    + \boldsymbol{S}_{N-1}^{\top}\tilde{\Sigma}_N\boldsymbol{S}_{N-1}
    - \frac{T_N\left(\sum_{i=1}^{N-1} T_is_i\right)^2}{1-\sum_{i=1}^{N-1} T_i}\\
    =& T_N\left(1-\sum_{i=1}^{N-1} T_i\right)\left(s_N + \frac{\sum_{i=1}^{N-1} T_is_i}{1-\sum_{i=1}^{N-1} T_i}\right)^2
    + \frac{1-\sum_{i=1}^N T_i}{1-\sum_{i=1}^{N-1}T_i}\boldsymbol{S}_{N-1}^{\top}\Sigma_{N-1}\boldsymbol{S}_{N-1}\\
    \geq& 0.
  \end{align*}
  Thus, $\Sigma_{N}$ is shown to be non-negative definite.
\end{proof}

\begin{rem}
  In Example~\ref{ex_tuika}, the assumption of a binomial distribution Bin\((M,\theta)\) is not essential. 
  The argument holds for any probability distribution \(\tilde{p}(x|\theta)\) that satisfies Lemma~\ref{tuika_lem1} and (\ref{tuika_eq02}). 
  For example, a Poisson distribution Po\((\theta)\) with mean \(\theta\) can also easily be verified to meet these conditions.

\end{rem}

\section{Conclusion}
In this paper, we first elucidate the relationship between the Taylor expansion of the Kullback-Leibler divergence and the log-likelihood ratio function (Proposition~\ref{prop1}), and apply it to semi-regular models, i.e., models where the rank of the Fisher information matrix is non-zero. We provided formulas related to the Taylor expansion of the Kullback-Leibler divergence in Main Theorem~\ref{mainthm}. In Main Theorem~\ref{mainthm1}, we use the Taylor expansion derived in Main Theorem~\ref{mainthm} to perform a specific blow-up and obtain evaluations related to the real log canonical threshold. Particularly, we derived formulas that provide exact values for the real log canonical threshold under certain conditions of linear independence.

The above discussion requires Assumption~\ref{mainass}. In Section~\ref{sec:chg}, we provide a method for constructing variable transformations that satisfy Assumption~\ref{mainass}.

As specific examples using Main Theorem~\ref{mainthm1}, we presented formulas for the real log canonical threshold in cases where the parameter count is $d$ and the rank of the Fisher information matrix is $d$ or $d-1$ (Proposition~\ref{prop:deg2}), and provided the exact values for the learning coefficient of a mixture distribution with a constant mixing ratio (Example~\ref{ex_tuika}).

The real log canonical threshold can be calculated directly using Main Theorem~\ref{mainthm1} only when the realizable parameter set $\Theta_*$ consists of a single point. As future work, we intend to generalize the techniques used in this study and derive learning coefficients for cases where $\Theta_*$ is not a single point.

\section*{Acknowledgments}
I am grateful to Professor Joe Suzuki of Osaka University for teaching me the basics of Bayesian theory and for providing a research theme that bridges algebraic geometry and statistics. 
I also thank him for his valuable comments, advice, and for checking this manuscript. 
I also express my gratitude to Professor Sumio Watanabe, who proposed the concept of the learning coefficient.

\appendix  %%%%%% Appendix starts here %%%%%%%%%%%%%%%%%%%%%%%%%%%%

\def\thesection{Appendix \Alph{section}}
% Change equation numbering format in the appendix
\renewcommand{\theequation}{\Alph{section}.\arabic{equation}}

\section{Proof of propositions and lemmas in Section \ref{sec:mainthm}}
\begin{proof}[\textbf{Proof of Lemma~\ref{lemG}}]\leavevmode\par
  Consider elements of $S_{i_1,\ldots,i_{n+1}}$ that either include or exclude the sequence $U=(i_{n+1})$.
  For sequences that include $U=(i_{n+1})$, the differential of $\log{p(x|\theta)}$ corresponding to the sequence $((i_1,\ldots,i_n),(i_{n+1}))$ is
  \[
    \frac{\partial^n\log{p(x|\theta)}}{\partial\theta_{i_1}\cdots\partial\theta_{i_n}}\cdot\frac{\partial\log{p(x|\theta)}}{\partial\theta_{i_{n+1}}}
  \]
  and for others, it corresponds to
  \[
    G_{\theta_{i_1}\cdots\theta_{i_n}}(x,\theta)\cdot\frac{\partial\log{p(x|\theta)}}{\partial\theta_{i_{n+1}}}
  \]
  For example, the differential of $\log{p(x|\theta)}$ corresponding to the sequence $((i_1),$ $(i_2,\ldots,i_n),(i_{n+1}))$ is,
  \[
    \left(\frac{\partial\log{p(x|\theta)}}{\partial\theta_{i_1}}
    \frac{\partial^{n-1}\log{p(x|\theta)}}{\partial\theta_{i_2}\cdots\partial\theta_{i_n}}
    \right)
    \cdot
    \frac{\partial\log{p(x|\theta)}}{\partial\theta_{i_{n+1}}}
  \]
  where
  \[
    \frac{\partial\log{p(x|\theta)}}{\partial\theta_{i_1}}\frac{\partial^{n-1}\log{p(x|\theta)}}{\partial\theta_{i_2}\cdots\partial\theta_{i_n}}
  \]
  is one of the components constructing $G_{\theta_{i_1}\cdots\theta_{i_n}}(x,\theta)$.

  On the other hand, sequences that do not include $U=(i_{n+1})$ correspond to
  \[
    \frac{\partial  G_{\theta_{i_1}\cdots\theta_{i_n}}(x,\theta)}{\partial\theta_{i_{n+1}}}
  \]
  by the product rule. For instance, for the elements of $S_{i_1,\ldots,i_{n+1}}$ corresponding to the sequences $((i_1,i_{n+1}),(i_2,\ldots,i_n)),((i_1),(i_2,\ldots,i_{n+1}))$, the differential of $\log{p(x|\theta)}$ is,
  \begin{align*}
    &\frac{\partial^2\log{p(x|\theta)}}{\partial\theta_{i_1}\partial\theta_{i_{n+1}}}\frac{\partial^{n-1}\log{p(x|\theta)}}{\partial\theta_{i_2}\cdots\partial\theta_{i_n}}
      +\frac{\partial\log{p(x|\theta)}}{\partial\theta_{i_1}}\frac{\partial^{n}\log{p(x|\theta)}}{\partial\theta_{i_2}\cdots\partial\theta_{i_{n+1}}}\\
    =&\frac{\partial}{\partial\theta_{i_{n+1}}}\left(\frac{\partial\log{p(x|\theta)}}{\partial\theta_{i_1}}
      \frac{\partial^{n-1}\log{p(x|\theta)}}{\partial\theta_{i_2}\cdots\partial\theta_{i_n}}\right)
  \end{align*}
  Hence, the recurrence relation as stated in the lemma is verified.\\
\end{proof}

%%%%%%%%%%%%%%%%%%%%%
\begin{proof}[\textbf{Proof of Proposition~\ref{prop1}}]\leavevmode\par
\begin{itemize}
\item[(1)]
Demonstrate using induction on $n$. For $n=1$, the case is evident from
\[
\frac{\partial f(x|\theta)}{\partial \theta_{i_1}}=-\frac{\frac{\partial p(x|\theta)}{\partial \theta_{i_1}}}{p(x|\theta)}
\]
For a general $n$, using the induction hypothesis,
\begin{align*}
&\frac{\partial^{n+1}  f(x|\theta)}{\partial \theta_{i_1}\cdots \partial \theta_{i_{n+1}}}
=\frac{\partial}{\partial \theta_{i_{n+1}}}\left\{-\frac{\frac{\partial^{n} p(x|\theta)}{\partial \theta_{i_1}\cdots \theta_{i_n}}}{p(x|\theta)}+G_{\theta_{i_1}\cdots\theta_{i_n}}(x,\theta)\right\}\\
=&-\frac{\frac{\partial^{n+1} p(x|\theta)}{\partial \theta_{i_1}\cdots\partial \theta_{i_{n+1}}}}{p(x|\theta)} +\frac{\frac{\partial^{n} p(x|\theta)}{\partial \theta_{i_1}\cdots\partial \theta_{i_n}}}{p(x|\theta)}\cdot\frac{\frac{\partial p(x|\theta)}{\partial \theta_{i_{n+1}}}}{p(x|\theta)}
+ \frac{\partial G_{\theta_{i_1}\cdots\theta_{i_n}}(x,\theta)}{\partial\theta_{i_{n+1}}}\\
=&-\frac{\frac{\partial^{n+1} p(x|\theta)}{\partial \theta_{i_1}\cdots \partial \theta_{i_{n+1}}}}{p(x|\theta)} +\left\{G_{\theta_{i_1}\cdots\theta_{i_n}}(x,\theta)+\frac{\partial^n \log{p(x|\theta)}}{\partial\theta_{i_1}\cdots\partial\theta_{i_n}}\right\}\cdot \frac{\partial\log{p(x|\theta)}}{\partial\theta_{i_{n+1}}}\\
  &~~~+\frac{\partial G_{\theta_{i_1}\cdots\theta_{i_n}}(x,\theta)  }{\partial\theta_{i_{n+1}}}\\
=&-\frac{\frac{\partial^{n+1} p(x|\theta)}{\partial \theta_{i_1}\cdots \partial \theta_{i_{n+1}}}}{p(x|\theta)} + G_{\theta_{i_1}\cdots\theta_{i_{n+1}}}(x,\theta)
\end{align*}
The last equality uses Lemma~\ref{lemG}.

\item[(2)]
Given the assumption that differentiation and integration can be interchanged, using the result from (1), it suffices to show
\[
\left.\mathbb{E}_X\left[\frac{\frac{\partial^n p(X|\theta)}{\partial \theta_{i_1}\cdots\partial \theta_{i_n}}}{p(X|\theta)}\right]\right|_{\theta=\theta_*}=0
\] as
\begin{align*}
&\left.\mathbb{E}_X\left[\frac{\frac{\partial^n p(X|\theta)}{\partial \theta_{i_1}\cdots\partial \theta_{i_n}}}{p(X|\theta)}\right]\right|_{\theta=\theta_*}
= \int_{\chi} \frac{\left.\frac{\partial^n p(x|\theta)}{\partial \theta_{i_1}\cdots\partial \theta_{i_n}}\right|_{\theta=\theta_*}}{p(x|\theta_*)}\cdot q(x) dx\\
= &\int_{\chi} \frac{\left.\frac{\partial^n p(x|\theta)}{\partial \theta_{i_1}\cdots\partial \theta_{i_n}}\right|_{\theta=\theta_*}}{p(x|\theta_*)}\cdot p(x|\theta_*) dx
=\int_{\chi} \left.\frac{\partial^n p(x|\theta)}{\partial \theta_{i_1}\cdots\partial \theta_{i_n}}\right|_{\theta=\theta_*} dx \\
=&\left.\frac{\partial^n}{\partial \theta_{i_1}\cdots\partial \theta_{i_n}}\int_{\chi} p(x|\theta)dx\right|_{\theta=\theta_*}
=\left.\frac{\partial^n1}{\partial \theta_{i_1}\cdots\partial \theta_{i_n}}\right|_{\theta=\theta_*} =0
\end{align*}

\end{itemize}
\end{proof}

%%%%%%%%%%%%%%%%%%%%%%%%%%%%

\begin{proof}[\textbf{Proof of Proposition~\ref{prop:taylor}}]\leavevmode\par
  \begin{itemize}
    \item [(1)]
      For any tuple of non-negative integers \((i_{r+1},\ldots,i_{d})\) that satisfies \(i_{r+1}+\cdots +i_{d}\leq 2m-1\), each term of \(G_{\theta_{r+1}^{i_{r+1}}\cdots\theta_d^{i_d}}\) takes the form:
      \begin{equation}\label{prop:taylor_eq01}
      \prod_{U\in T}\frac{\partial^{|U|}\log p(x|\theta)}{\prod_{k\in U}\partial\theta_k}
      \end{equation}
      where $T$ consists of multiple non-empty proper subsets of the set
      \[
        \{
          \underbrace{r+1, \ldots, r+1}_{\#=i_{r+1}}, \ldots, \underbrace{d, \ldots, d}_{\#=i_{d}}
        \}
      \]

      Thus, if \(\sum_{k=r+1}^d i_k\leq 2m-1\), it includes some \(U_1\) where \(|U_1|\leq m-1\).
      By assumption,
      \begin{equation}\label{prop:taylor_eq03}
      \left.\frac{\partial^{|U_1|}\log p(x|\theta)}{\prod_{k\in U_1}\partial\theta_k}
      \right|_{(\theta_1,\ldots,\theta_d)=0}= 0~~\text{a.s.}
      \end{equation}
      and hence from (\ref{prop:taylor_eq01}) it follows that \(G_{\theta_{r+1}^{i_{r+1}}\cdots\theta_d^{i_d}}(X,0)=0\) ~~\text{a.s.}.
    \item [(2)]
      Similarly to (1), each term of \(G_{\theta_j\theta_{r+1}^{i_{r+1}}\cdots\theta_d^{i_d}}\) takes the form of (\ref{prop:taylor_eq01}), where $T$ consists of multiple non-empty proper subsets of the set
      \[
        \{
        j,\underbrace{r+1,\ldots,r+1}_{\#=i_{r+1}},\ldots,\underbrace{d,\ldots,d}_{\#=i_{d}}
        \}
      \]
      Thus including some \(U_2\) that does not contain element \(\{j\}\).
      By assumption,
      \begin{equation}\label{prop:taylor_eq04}
      \left.\frac{\partial^{|U_2|}\log p(x|\theta)}{\prod_{k\in U_2}\partial\theta_k}
      \right|_{(\theta_1,\ldots,\theta_d)=0}= 0~~\text{a.s.}
      \end{equation}
      and hence from (\ref{prop:taylor_eq01}) it follows that \(G_{\theta_j\theta_{r+1}^{i_{r+1}}\cdots\theta_d^{i_d}}(X,0)=0\)~~\text{a.s.}.
    \item [(3)]
    As with (2), consider that all $T$ other than
    \[
    T_0 = (
    (j), (\underbrace{r+1, \ldots, r+1}_{\#=i_{r+1}}, \ldots, \underbrace{d, \ldots, d}_{\#=i_{d}})
    )
    \]
    contain terms in the form of (\ref{prop:taylor_eq04}) as factors and therefore do not need to be considered when computing $G$. Given that such $T_0$ is unique,
      \begin{align*}
      G_{\theta_j\theta_{r+1}^{i_{r+1}}\cdots\theta_{d}^{i_{d}}}(X,0)
      =&\prod_{U\in T_0}
      \left.\frac{\partial^{|U|}\log p(X|\theta)}{\prod_{k\in U}\partial\theta_k}\right|_{(\theta_1,\ldots,\theta_d)=0}\\
      =&\left.\frac{\partial \log p(X|\theta)}{\partial\theta_j}\right|_{(\theta_1,\ldots,\theta_d)=0}
      \left.\frac{\partial^{m} \log p(X|\theta)}{\partial\theta^{i_{r+1}}_{r+1}\cdots\partial\theta^{i_{d}}_d}\right|_{(\theta_1,\ldots,\theta_d)=0}~~\text{a.s.}\\
      =&\left.\frac{\partial f(X|\theta)}{\partial\theta_j}\right|_{(\theta_1,\ldots,\theta_d)=0}
      \left.\frac{\partial^{m} f(X|\theta)}{\partial\theta^{i_{r+1}}_{r+1}\cdots\partial\theta^{i_{d}}_d}\right|_{(\theta_1,\ldots,\theta_d)=0}
      \end{align*}
      is shown.
    \item [(4)]   
    Similar to (1),
    \begin{equation*}
      j_{r+1} + \cdots + j_d = m, \quad i_h \geq j_h \geq 0
    \end{equation*}
    for the integer tuples $(j_{r+1}, \ldots, j_d)$, define
    \[
    T_0 = (
    (\underbrace{r+1, \ldots, r+1}_{\#=j_{r+1}}, \ldots, \underbrace{d, \ldots, d}_{\#=j_d}),
    (\underbrace{r+1, \ldots, r+1}_{\#=i_{r+1}-j_{r+1}}, \ldots, \underbrace{d, \ldots, d}_{\#=i_d-j_d})
    )
    \]
    Since $T_0$ is the only tuple considered in calculating $G$ because other $T$s contain factors in the form of (\ref{prop:taylor_eq03}), note that the number of such $T_0$ depending on $(j_{r+1}, \ldots, j_d)$ is
    \[
    \binom{i_{r+1}}{j_{r+1}} \cdots \binom{i_d}{j_d} \times
    \begin{cases}
      \frac{1}{2} & \text{if } (2j_{r+1}, \ldots, 2j_d) = (i_{r+1}, \ldots, i_d)\\
      1 & \text{else}
    \end{cases}
    \]
    Given this, it has been demonstrated:
      \begin{align*}
      &G_{\theta_{r+1}^{i_{r+1}}\cdots\theta_d^{i_d}}(X,0)
      =\sum_{\substack{j_{r+1}+\cdots+j_d=m\\ i_h\ge j_h\ge0}}
      \prod_{U\in T_0}
      \left.\frac{\partial^{|U|}\log p(X|\theta)}{\prod_{k\in U}\partial\theta_k}\right|_{(\theta_1,\ldots,\theta_d)=0}\\
      =&\frac{1}{2}\sum_{\substack{j_{r+1}+\cdots+j_d=m\\ i_h\ge j_h\ge0}}
        \binom{i_{r+1}}{j_{r+1}}\cdots\binom{i_d}{j_d}
        \left.\frac{\partial^m \log p(X|\theta)}{\partial\theta_{r+1}^{j_{r+1}}\cdots\partial\theta_d^{j_d}}\right|_{\theta=0}\\
        &~~~\times
        \left.\frac{\partial^m \log p(X|\theta)}{\partial\theta_{r+1}^{i_{r+1}-j_{r+1}}\cdots\partial\theta_d^{i_d-j_d}}\right|_{\theta=0} ~~\text{a.s.}\\
      =&\frac{1}{2}\sum_{\substack{j_{r+1}+\cdots+j_d=m\\ i_h\ge j_h\ge0}}
        \binom{i_{r+1}}{j_{r+1}}\cdots\binom{i_d}{j_d}
        \left.\frac{\partial^m f(X|\theta)}{\partial\theta_{r+1}^{j_{r+1}}\cdots\partial\theta_d^{j_d}}\right|_{\theta=0}\\
        &~~~\times
        \left.\frac{\partial^m f(X|\theta)}{\partial\theta_{r+1}^{i_{r+1}-j_{r+1}}\cdots\partial\theta_d^{i_d-j_d}}\right|_{\theta=0}
      \end{align*}
      Note that in the transformation, we used the fact that
      \[
        \binom{i_{r+1}}{i_{r+1}-j_{r+1}}\cdots\binom{i_d}{i_d-j_d}=\binom{i_{r+1}}{j_{r+1}}\cdots\binom{i_d}{j_d}
      \]
  \end{itemize}
\end{proof}

%%%%%%%%%%%%%%%%%%%%%%%%%%%%%%%%%%%

\begin{proof}[\textbf{Proof of Lemma~\ref{mainlem2}}]\leavevmode\par
  \begin{itemize}
    \item [(1)]
      From Assumption~\ref{mainass}(1), the $r$ random variables
      \begin{equation}\label{eq_mainlem2}
        \left.\frac{\partial f(X|\theta)}{\partial\theta_1}\right|_{(\theta_1,\ldots,\theta_d)=0}
        ,\ldots,
        \left.\frac{\partial f(X|\theta)}{\partial\theta_r}\right|_{(\theta_1,\ldots,\theta_d)=0}
      \end{equation}
      are linearly independent, therefore,
      \begin{align*}
        \mathbb{E}_X\left[F^2_1(X|\theta_{1},\ldots,\theta_{r})\right]=0 
        &\Leftrightarrow  F_1(X|\theta_{1},\ldots,\theta_{r})=0 ~~\text{a.s.}\\
        &\Leftrightarrow  \sum_{k=1}^r\theta_k\cdot\left.\frac{\partial f(X|\theta)}{\partial\theta_k}\right|_{(\theta_1,\ldots,\theta_d)=0}=0 ~~\text{a.s.}\\
        &\Leftrightarrow  (\theta_1,\ldots,\theta_d)=0
      \end{align*}
      follows.

    \item[(2)]
      If Assumption~\ref{mainass}(3)(i) holds, then $F_m(X|\theta_{r+1},\ldots,\theta_{d})=0$ (a.s.) and the lemma holds as per (i).
      Therefore, it suffices to consider only when Assumption~\ref{mainass}(3)(ii) holds.
      \begin{align*}
        &\mathbb{E}_X\left[\left\{F_1(X|\theta_{1},\ldots,\theta_{r})+a\cdot F_m(X|\theta_{r+1},\ldots,\theta_{d})\right\}^2\right]=0\\
        \Leftrightarrow ~&F_1(X|\theta_{1},\ldots,\theta_{r})+a\cdot F_m(X|\theta_{r+1},\ldots,\theta_{d})=0~~\text{a.s.}\\
        \Leftrightarrow ~&\sum_{k=1}^r\theta_k\cdot\left.\frac{\partial f(X|\theta)}{\partial\theta_k}\right|_{(\theta_1,\ldots,\theta_d)=0}+a\cdot F_m(X|\theta_{r+1},\ldots,\theta_{d})=0~~\text{a.s.}
      \end{align*}
      Noting that $a>0$ and by Assumption~\ref{mainass}(3)(ii), since (\ref{eq_mainlem2}) and the random variable $a\cdot F_m(X|\theta_{r+1},\ldots,\theta_{d})$ are linearly independent, the equivalence does not hold, and we obtain
      \[
      \mathbb{E}_X\left[\left\{F_1(X|\theta_{1},\ldots,\theta_{r})+a\cdot F_m(X|\theta_{r+1},\ldots,\theta_{d})\right\}^2\right]>0
      \]
  \end{itemize}
\end{proof}

\section{Mathematica Output Results}

%\newpage
%\includepdf[pages=-, scale=0.8, pagecommand={},lastpage=2]{}

%Arxiv投稿時、↑を非表示にして、psファイルを同梱しておけば最後に出力される
%(ジャーナル投稿時は不要)

\end{document}